\documentclass[12pt]{scrartcl}
\usepackage[
bookmarks,
  bookmarksopen=true,
  bookmarksnumbered=true,
  pdfusetitle,
  pdfcreator={},
  colorlinks,
  linkcolor=black,
  urlcolor=black,
  citecolor=black,
  plainpages=false,
  ]{hyperref}
	\usepackage[
  left=2.5cm,
  right=2.5cm,
  top=2cm,
]{geometry}

\usepackage{amsfonts}
\usepackage{amsmath}
\usepackage{amssymb}
\usepackage{amsthm}
\usepackage{algorithm}
\usepackage{algorithmic}
\usepackage{graphicx}
\usepackage{color}
\usepackage{subcaption}

\usepackage{tikz,pgfplots}
\usetikzlibrary{plotmarks}

\usepackage{prettyref}
\usepackage{mathrsfs}
\usepackage[utf8]{inputenc}
\usepackage{floatflt} 
\usepackage{float} 
\usepackage{wrapfig}

\usepackage{chngcntr}
\counterwithin{figure}{section}
\counterwithin{table}{section}

\captionsetup{subrefformat=parens}
\renewcommand{\vec}[1]{\ensuremath \mathbf{\boldsymbol{#1}}}

\renewcommand{\phi}{\varphi}
\newcommand{\N}{\ensuremath{\mathbb{N}}}
\newcommand{\T}{\ensuremath{\mathbb{T}}}

\newcommand{\I}{\ensuremath{\mathcal{I}}}
\newcommand{\J}{\ensuremath{\mathcal{J}}}

\newcommand{\Z}{\ensuremath{\mathbb{Z}}}

\newcommand{\R}{\ensuremath{\mathbb{R}}}
\newcommand{\C}{\ensuremath{\mathbb{C}}}
\newcommand{\E}{\ensuremath{\mathbb{E}}}

\newcommand{\per}{\mathrm{per}}

\renewcommand{\O}{\ensuremath{\mathcal{O}}}
\renewcommand{\P}{\ensuremath{\mathbb{P}}}

\newcommand{\dx}{\mathrm{d}}
\newcommand{\e}{\mathrm{e}}
\newcommand{\im}{\mathrm{i}}

\newcommand{\mix}{\mathrm{mix}}

\DeclareMathOperator*{\diag}{diag}
\DeclareMathOperator*{\supp}{supp}
\DeclareMathOperator*{\cir}{circ}

\DeclareMathOperator*{\lin}{span}

\renewcommand{\d}{\, \mathrm{d}}
\newcommand{\Dx}{\mathrm{D}}

\newcommand{\norm}[1]{\left\lVert \smash{#1} \right\rVert}

\newcommand\multiset[2]
{\mathchoice{\left(\kern-0.5em{\binom{#1}{#2}}\kern-0.5em\right)}
	{\bigl(\kern-0.3em{\binom{#1}{#2}}\kern-0.3em\bigr)}
	{\bigl(\kern-0.3em{\binom{#1}{#2}}\kern-0.3em\bigr)}
	{\bigl(\kern-0.3em{\binom{#1}{#2}}\kern-0.3em\bigr)}}

\newcommand{\bB}{\mathbf{\boldsymbol{B}}}
\newcommand{\bj}{\mathbf{\boldsymbol{j}}}
\newcommand{\bh}{\mathbf{\boldsymbol{h}}}
\newcommand{\bk}{\mathbf{\boldsymbol{k}}}

\newcommand{\uinv}[2]{\uparrow\vec{#1}_{\vec{#2}}}  \newcommand{\X}{\ensuremath{\mathcal{X}}}

\setcounter{totalnumber}{4}

\newtheorem{theorem}{Theorem}[section]
\newtheorem{lemma}[theorem]{Lemma}

\newtheorem{corollary}[theorem]{Corollary}
\newtheorem{proposition}[theorem]{Proposition}
\newtheorem{problem}[theorem]{Problem}

\theoremstyle{definition}
\newtheorem{definition}[theorem]{Definition}
\newtheorem{example}[theorem]{Example}
\newtheorem{remark}[theorem]{Remark}

\newenvironment{Remark}[1][noisnotdefined]{ \ifthenelse{\equal{#1}{noisnotdefined}}{\begin{remark}}{\begin{remark}[#1]}\normalfont\rmfamily}{\bend\end{remark}}

\newenvironment{Definition}{ \begin{definition}\normalfont\rmfamily}{\end{definition}}

\numberwithin{equation}{section}

\newcommand{\bend}{\hspace*{0ex} \hfill \hbox{\vrule height
    1.5ex\vbox{\hrule width 1.4ex \vskip 1.4ex\hrule  width 1.4ex}\vrule
    height 1.5ex}}

\long\def\symbolfootnote[#1]#2{\begingroup
\def\thefootnote{\fnsymbol{footnote}}\footnote[#1]{#2}\endgroup}

\newcounter{todocounter}
\newcommand{\todo}[2][noisnotdefined]{
 \marginpar{\fcolorbox{black}{yellow}{\footnotesize\textbf{todo}}
 \ifthenelse{\equal{#1}{noisnotdefined}}{}{\textcolor{black}{\newline\tiny #1}}}
 \textbf{\ifthenelse{\equal{#2}{.}}
   {\fcolorbox{red}{white}{\textcolor{red}{$\maltese$}}}{{\textcolor{red}{#2}}}}
 \refstepcounter{todocounter}}

\newrefformat{def}{Definition \ref{#1}}
\newrefformat{sub}{Subsection \ref{#1}}
\newrefformat{prop}{Proposition \ref{#1}}
\newrefformat{fig}{Figure \ref{#1}}
\newrefformat{rem}{Remark \ref{#1}}
\newrefformat{cor}{Corollary \ref{#1}}

\title{Fast Hyperbolic Wavelet Regression meets ANOVA }

\date{\today}
\date{}
\author{Laura Lippert\thanks{Faculty of Mathematics, Chemnitz University of Technology, D-09107 Chemnitz, Germany.\newline E-mail:		\href{mailto:laura.lippert@math.tu-chemnitz.de}{\{laura.lippert, daniel.potts, tino.ullrich\}@math.tu-chemnitz.de}} 
\and 
Daniel Potts\footnotemark[1]
\and 	 
Tino Ullrich\footnotemark[1]
}

\begin{document}
\maketitle
\begin{abstract}
 We use hyperbolic wavelet regression for the fast reconstruction of high-dimensional functions having only low dimensional 
variable interactions. Compactly supported periodic Chui-Wang wavelets are used for the tensorized hyperbolic wavelet basis. 
In a first step we give a self-contained characterization of tensor product Sobolev-Besov spaces on the $d$-torus with arbitrary 
smoothness in terms of the decay of such wavelet coefficients. In the second part we perform and analyze scattered-data 
approximation using a hyperbolic cross type truncation of the basis expansion for the associated least squares method. 
The corresponding system matrix is sparse due to the compact support of the wavelets, which leads to a significant 
acceleration of the matrix vector multiplication. In case of i.i.d. samples we can even bound the approximation error 
with high probability by loosing only $\log$-terms that do not depend on $d$ compared to the best approximation. In addition, if the function has low effective 
dimension (i.e. only interactions of few variables), we qualitatively determine the variable interactions and omit ANOVA terms with low variance in a second step in order to increase the accuracy. This allows us to suggest an adapted model for the approximation. 
Numerical results show the efficiency of the proposed method. 
\end{abstract}
 \textit{Keywords:} Least squares approximation, random sampling, wavelets, ANOVA decomposition

 \noindent\textit{2010 AMS Mathematics Subject Classification: } 41A17, 41A25, 41A63, 65D15, 65T60

\section{Introduction}
We consider the problem of reconstructing a multivariate periodic function $f$ on the $d$-torus $\T^d$ from discrete function samples on the set of nodes $\X=\{\vec x_1,\ldots,\vec x_M\}\subset \T^d$. As a function model we use periodic Sobolev-Besov 
spaces with dominating mixed smoothness as they have proven useful in several multivariate approximation problems, see \cite{Ys10},  \cite[Chapt.\ 9]{DuTeUl16} and \cite[Chapter III,IV]{Tem93}. We provide fast algorithms which recover an individual function $f:\T^d\to \C$ from unstructured samples (scattered data), where the error is measured in $L_2(\T^d)$ and $L_\infty(\T^d)$. In the sense of \cite[Chapt.\ 22,23]{NW12} our recovery operator is a \textit{Monte-Carlo method} referring to the \textit{randomized setting}, since the nodes in $\X$ are drawn individually for each function $f$ and are not supposed to work simultaneously for the whole class of functions like in \cite[Sect.\ 9]{KaUlVo19}. We will extend the idea 
in \cite{Bo17,BoGr17, Bo18} to higher order smoothness and give statements that work with overwhelming probability. In fact, 
the core of the recovery is a wavelet based least squares algorithm. Throughout the paper we work in a rather general context 
with a minimal set of assumptions, namely in the context of (semi-)orthogonal compactly supported wavelets being a Riesz basis 
on a fixed level and having $m$ vanishing moments, see~\eqref{eq:support},~\eqref{eq:moments} and~\eqref{eq:Riesz}. This 
allows for considering two main examples, periodic orthogonal Daubechies wavelets as well as semi-orthogonal Chui-Wang wavelets.  

In this paper we give a self-contained and rather elementary proof for the characterization of Sobolev and Besov-Nikolskij spaces $H^s_\mix(\T^d)$ and $\bB^s_{2,\infty}(\T^d)$ in terms of such wavelet coefficients. There is an interesting qualitative phenomenon happening if $s$ equals $m$, the order of vanishing moments, see Theorem~\ref{thm:norm-psi*}.  In case of Chui-Wang wavelets this is reflected by the order of the underlying B-spline.    

For a finite dimensional hyperbolic cross type index set $I$, 
given as in~\eqref{eq:defPn} and cardinality $|I|$, we study the projection operator
$$P_I f =\sum_{j\in I}\langle f,\psi_j^*\rangle\psi_j.$$ 
As a first result wee state in Corollary~\ref{cor:l2error_Pn} that for $s<m$ the $L_2(\T^d)$-error is bounded by
$$\norm{f-P_If}_{L_2(\T^d)}\lesssim \frac{(\log |I|)^{(s+1/2)(d-1)}}{|I|^s}\norm{f}_{\bB^s_{2,\infty}(\T^d)}.$$
From this bound we infer that the operator $P_I$ yields the asymptotic optimal error among all operators 
with rank $|I|$, see \cite[Thm.\ 4.3.10]{DuTeUl16}. In case $s=m$, the above bound remains true if we replace the space $\bB^m_{2,\infty}(\T^d)$ by the smaller space $H^m_{\mix}(\T^d)$, see Corollary~\ref{cor:error_m=s}. One would actually expect a better rate. However, numerical experiments based on the tools developed in this paper indicate, that there must be an additional $\log$-term, see Figure~\ref{fig:RMSE_ex2}. Additionally, we give in Theorem~\ref{thm:L_infty} a bound for the $L_\infty{(\T^d)}$-error 
of the projection operator in both cases. Note, that these problems have some history. They belong to the field ``hyperbolic wavelet approximation'' and have already been investigated by several authors in the literature, e.g. \cite{DKT98,GrOsSc99,Gr00,SiUl09,Bo18}, see Section~\ref{sec:HWA}. \\

In order to investigate the scattered data problem, where we are engaged with the sample set $\X$ and the corresponding function values, 
we construct a recovery operator $S_I^\X$. This operator computes  
a best least squares fit 
$$S_I^\X f = \sum_{j\in I}a_j \psi_j,$$ 
to the given data 
$(f(\vec x))_{\vec x\in \X}$ from the finite dimensional subspace spanned by the wavelets with indices in the hyperbolic cross type set $I$.
We derive the coefficients $a_j$ by minimizing the error $\sum_{\vec x\in \X}|f(\vec x)-S_I^\X f(\vec x)|^2$ by 
using an LSQR algorithm, see~\eqref{eq:def_S_n^X}. This results in the hyperbolic wavelet regression in~Algorithm~\ref{alg:1}.
Assuming that the sample points in $\X$ are drawn i.i.d. and equally distributed at random with $|\X|\gtrsim |I|\,\log |I|$, 
we show in Corollary~\ref{cor:one_f} for $1/2<s<m$ and $r>1$ that there is a constant $C(r,d,s)>0$ such that for fixed $\|f\|_{\bB^s_{2,\infty}} \leq 1$
$$\P\Big(\norm{f-S_I^\X f}_{L_2(\T^d)}\leq C(r,d,s) \,\frac{(\log |\X|)^{(d-1)(s+1/2)+s}}{|\X|^s}\Big) \geq 1-2|\X|^{-r}\,.$$ For details to the constant $C(r,d,s)$ see Corollary~\ref{cor:one_f}. The proof is based on the well-known elementary Bernstein 
inequality for the deviation of a sum of random variables ~\cite[Theorem 6.12]{Stein08}. It turns out that the error of the least squares approximation asymptotically coincides with the behavior of the projection operator $P_I$ on the wavelet space with index set $I$. 
Even in the case $s=m$ we show in Corollary~\ref{cor:one_f} that the error of the least squares approximation inherits the error bound of the projection operator.
Such operators as well as the approximation error are also considered in~\cite{ChCoMi15,CoDaLe13,CoMi17,KaUlVo19}.
Note that, in contrast to the expected error, which has been considered in those references, we show a new 
concentration inequality for the approximation error $\norm{f-S_I^\X f}_{L_2(\T^d)}$. 
The often considered case where $f\in H^2_{\mix}(\T^d)$, \cite{GaGr13,Bo17}, can be covered in our results with wavelets with vanishing moments of order $m\geq 2$, where we benefit from piecewise quadratic wavelets what concerns the convergence rate, see Figure~\ref{fig:RMSE_ex2}.  \\

We use the parameter $n$ to determine the index-set $I$, for details see Section~\ref{sec:HWA}. In Lemma~\ref{lem:N} we state the well-known asymptotic bound 
 $|I| = \O(2^n n^{d-1})$ for hyperbolic cross type wavelet index sets $I$. This cardinality of the index set $|I|$ in the hyperbolic wavelet regression drastically reduces the complexity compared to a full grid approximation, which requires an index set of order $2^{d n}$. However, since the dimension $d$ still appears in the exponent of a logarithmic term, we aim to further reduce the index set $I$, while keeping 
the same approximation rate. To this end we introduce the analysis of variance (ANOVA) decomposition, see \cite{CaMoOw97,LiOw06,Holtz11}, \cite[Section 3.1.6]{NW08}, 
which decomposes the $d$-variate function into $2^d$ ANOVA terms, i.e.
$$f(\vec x) = \sum_{\vec u \subset\{1,\ldots,d\}}f_{\vec u}(\vec x_{\vec u}).$$
Each term corresponding to $\vec u$ depends only 
on variables $x_i$, where $i\in \vec u$. The number of these variables is called \textit{order} of the ANOVA term. 
However, in practical applications with high-dimensional functions, often 
only the ANOVA terms of low order play a role in order to describe the function well, see~\cite{CaMoOw97,KuSlWaWo09, DePeVo10, Wu2011, PoSc21}. For a rigorous mathematical treatment of this observation we introduce ANOVA inspired Sobolev spaces of dominating mixed derivatives with superposition dimension $\nu$
$$H^{s,\nu}_{\mix}(\T^d)=\{f\in H_{\mix}^s(\T^d)\mid f_{\vec u}=0 \text{ for all } \vec u\notin U_{\nu} \},$$
see also \cite{DuUl13} and \eqref{eq:HsU} for a natural generalization.
These function spaces describe functions, 
which only consist of ANOVA terms of order less than the superposition dimension $\nu$.
Note that, in contrast to \cite{Sc18} we do not restrict our model to ANOVA terms of order $\nu=1$.
We propose the new Algorithm~\ref{alg:anova}, which approximates functions in the space $H^{s,\nu}_{\mix}(\T^d)$ very well, by using the connection between
the ANOVA terms and the corresponding wavelet functions. This allows us to reduce the cardinality of the needed index set $I$ significantly to
$|I|=\O(\binom{d}{\nu}\, 2^n n^{{\nu}-1})$. Furthermore, we gain on the approximation error for $r>1$ to
$$\mathbb P\left(\norm{f-S_I^\X f}_{L_2(\T^d)}\leq C(r,\nu,s)\,\frac{(\log |\X|)^{s\nu}}{|\X|^s}\norm{f}_{H^s_{\mix}(\T^d)}\right)\geq 1-2|\X|^{-r},$$ 
with a constant $C(r,\nu,s)>0$, for details see Section~\ref{sec:truncate_ANOVA}.

Our further strategy, finding the unimportant dimension interactions and then building an adapted model, 
allows an interpretation of the data. This strategy is based on sensitivity 
analysis in combination with computing sensitivity indices, see~\eqref{eq:def_gsi}, analytically for the related wavelets. 
A similar algorithm for the approximation with Fourier methods is described in~\cite{PoSc19a}. 
Note that in this case one has to use a full 
index set in frequency domain, which gives the larger cardinality of the index set $|I|=\O(\binom{d}{\nu}\, 2^{n\nu})$ for the 
same approximation error, for details see Remark~\ref{rem:fourier}. \\

This paper is organized as follows. In Section \ref{sec:ANOVA} we recall the well-known ANOVA decomposition of a function on the 
$d$-dimensional torus. Section~\ref{sec:wavelets} is dedicated to the approximation using wavelets. 
In Section~\ref{sec:chui} we introduce periodic wavelet spaces, especially Chui-Wang wavelets which are semi-orthogonal, 
piecewise polynomial and compactly supported wavelets. We formulate three fundamental properties of wavelets which represent the only requirements for our theory. 
In Section~\ref{sec:bounded} we give a rather simple and elementary proof of the one-sided sharp wavelet 
characterization in our periodic setting. 
In Section~\ref{sec:HWA} we introduce the operator which truncates the wavelet decomposition by orthogonal 
projection on the wavelet spaces and we determine the number of necessary parameters. 
Two results from probability theory are summarized in Section~\ref{sec:prob}. We use these results in Section~\ref{sec:HWR} 
to bound the approximation error from scattered data approximation. This results in the concentration inequalities
in Corollary~\ref{cor:one_f}. Algorithm~\ref{alg:1} summarizes the hyperbolic wavelet regression, which gives us the approximant $S_I^\X f$.
In Section~\ref{sec:wav_anova} we show the connection between the ANOVA decomposition and the hyperbolic wavelet regression. Therefore, 
we determine in Theorem~\ref{thm:anova_psi} the ANOVA decomposition of our approximant $S_I^\X f$, which we use to improve our algorithm 
to Algorithm~\ref{alg:anova}.  
Finally, Section~\ref{sec:num} is dedicated to some numerical examples, where we apply our algorithms to confirm our theory. The relevant facts about Sobolev-Besov spaces of mixed smoothness have been collected in the appendix.

\subsection{Notation}
In this paper we consider multivariate periodic functions $f\colon \T^d\to \C$, $d\in \N$, where we identify the torus $\T$ with ${[-\tfrac 12,\tfrac 12)}$. 
As usually, we define the function spaces
$$L_p(\T^d):=\left\{f\colon \T^d\rightarrow \C\left| \norm{f}_{L_p(\T^d)}<\infty \right.\right\},$$
normed by
\begin{equation*}
\norm{f}_{L_p(\T^d)}=\begin{cases}
(\int_{\T^d}|f(\vec x)|^p\d \vec x)^{1/p} & \text{ if } p<\infty,\\
\sup_{\vec x\in \T^d} |f(\vec x)| & \text{ if } p=\infty.
\end{cases}
\end{equation*}
The overall aim is to approximate a square integrable function $f\in L_2(\T^d)$ by a function from a finite dimensional subspace of $L_2(\T^d)$. We study the 
scattered-data problem, i.e. we have given some sample points $\vec x\in \T^d$, where we denote the set of all sample points by $\X$, and 
we have given the function values $\vec y = (f(\vec x))_{\vec x\in \X}\in \C^M$.
We denote the number of sample points by $M=|\X|$.

Let us first introduce some notation. In this paper we denote by $[d]$ the set $\{1,\ldots,d\}$ and its power set by $\mathcal{P}([d])$. The $d$-dimensional input variable of the function $f$ is $\vec x$, where we denote the subset-vector by $\vec x_{\vec u}=(x_i)_{i\in \vec u}$ 
for a subset $\vec u\subseteq [d]$. The complement of those subsets is always with respect to $[d]$, i.e., $\vec u^c=[d]\backslash \vec u$.
For an index set $\vec u\subseteq [d]$ we define $|\vec u|$ as the number of elements in $\vec u$.
Since we will often use vector-notation, the relations $<$ and $>$ for vectors are always meant pointwise. 
As usual, we use the Kronecker delta 
$$\delta_{j,\ell} =\begin{cases}1 &\text{ if } j=\ell,\\
0& \text{otherwise.}
\end{cases}  $$ 
The vector-valued version is also 
a pointwise generalization $\delta_{\vec j,\vec \ell}=\prod_{i=1}^d\delta_{j_i,\ell_i}$.
Additionally, if we write $a+\vec b$, where $a$ is a scalar and $\vec b$ is a vector, we mean that we add $a$ to every component of the vector $\vec b$.  
Furthermore, the notation $X\lesssim Y$ means that $X\leq CY$ for some constant $C$ which does not depend on the relevant parameters.
The inner product for functions and vectors is defined by
$$\langle f,g\rangle=\int_{\T^d}f(\vec x)\overline{g(\vec x)}\d \vec x,\quad \quad\langle \vec x,\vec y\rangle = \vec y^*\cdot\vec x.$$
For $p\in\{1,2\}$ we introduce the norm 
$\norm{\vec x}_p=\left(\sum_{i=1}^d |\vec x_i|^p\right)^{1/p}.$
Moreover, we introduce the multi-dimensional Fourier coefficients on the torus by 
\begin{equation}\label{eq:Fourier}
c_{\vec k}(f)=\int_{\T^d}f(\vec x)\,\e^{-2\pi \im\langle\vec k,\vec x\rangle}\d \vec x.
\end{equation}

\section{The ANOVA decomposition}\label{sec:ANOVA}
The aim of sensitivity analysis is to describe the structure of multivariate periodic functions $f$ and analyze the influence of each 
variable. A traditional approach is to study the variance, defined in \eqref{eq:sigma}, of $f(\vec x_{\vec u})$ for $\vec u\subseteq [d]$, 
to find out which terms contribute how much to the total variance of $f$. A concept used frequently, see for instance \cite{CaMoOw97, Holtz11, LiOw06}, is the following.
\begin{definition}
The\textit{ ANOVA decomposition }(Analysis of variance) of a function ${f\colon \T^d\to \C}$ is given by
\begin{align}\label{eq:anova-decomp}
f(\vec x)&=f_{\varnothing}+\sum_{i=1}^d f_{\{i\}}(x_i)+\sum_{i\neq j=1}^d f_{\{i,j\}}(x_i,x_j)+\cdots+f_{[d]}(\vec x)\notag\\
&=\sum_{\vec u\in \mathcal{P}([d])}f_{\vec u}(\vec x_{\vec u}).
\end{align}
\end{definition}
For $\vec u=\varnothing$ the function $f_\varnothing$ is a constant that is equal to the grand mean
\begin{equation*}\label{eq:fzero}
f_{\varnothing}=\int_{\T^d}f(\vec x)\d \vec x.
\end{equation*}
The one-dimensional terms for $j=1,\ldots,d$, the so-called main effects, can be estimated by the integral 
$$f_{\{j\}}(x_j)=\int_{\T^{d-1}}(f(\vec x)-f_\varnothing)\d \vec x_{j^c},$$
which only depends on $\vec x$ through $x_j$, all other components $\vec x_{j^c}$ have been integrated out. 
The corresponding two-factor interactions are 
$$f_{\{i,j\}}( x_j,x_i)=\int_{\T^{d-2}}f(\vec x)\d \vec x_{\{j,i\}^c}-f_{\{j\}}(x_j)-f_{\{i\}}(x_i)-f_\varnothing.$$
In general, we define the following.
\begin{definition}\label{def:anova-terms}
Let $f$ be in $L_2(\T^d)$. For a subset $\vec u\subseteq[d]$ we define the \textit{ANOVA terms} by 
\begin{equation}\label{eq:anova-terms}
f_{\vec u}(\vec x_{\vec u})=\int_{\T^{d-|\vec u|}}f(\vec x)\d \vec x_{\vec u^c}-\sum_{\vec v\subset \vec u}f_{\vec v}(\vec x_{\vec v}).
\end{equation}
\end{definition}                                                                  
We do not want to attribute anything to $\vec x_{\vec u}$ that can be explained by $\vec x_{\vec v}$ for
strict subsets $\vec v\subset\vec u$, so we subtract the corresponding $f_\vec v(\vec x_{\vec v})$. 
By averaging over all other variables not in $\vec u$, we receive functions that depend only on $\vec x_{\vec u}$. The definition of $f_{[d]}(\vec x)$
ensures that the functions defined in Definition \ref{def:anova-terms} satisfy \eqref{eq:anova-decomp}.
There are many ways to make a decomposition of the form \eqref{eq:anova-decomp}. Indeed, an
arbitrary choice of $f_{\vec u}$ for all $|\vec u| < d$ can be accommodated by taking $f_{[d]}$ to be $f$
minus all the other terms. The terms in Definition \ref{def:anova-terms} are the unique decomposition \eqref{eq:anova-decomp}, 
such that they have additionally mean zero.
\begin{lemma}\label{lem:meanzero}
For all $\vec u\neq \varnothing $ the decomposition in Definition~\ref{def:anova-terms} fulfills
$$\int_{\T^{|\vec u|}}f_{\vec u}(\vec x_{\vec u})\d \vec x_{\vec u}=0.$$
\end{lemma}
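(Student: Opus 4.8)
The plan is to prove the statement by induction on the cardinality $|\vec u|$, exploiting the recursive structure of Definition~\ref{def:anova-terms} together with the fact that $\T$ is identified with $[-\tfrac12,\tfrac12)$ and hence carries unit measure.

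For the base case $|\vec u|=1$, say $\vec u=\{j\}$, I would integrate the defining formula $f_{\{j\}}(x_j)=\int_{\T^{d-1}}f(\vec x)\d\vec x_{j^c}-f_\varnothing$ over $x_j\in\T$. By Fubini the iterated integral collapses to $\int_{\T^d}f(\vec x)\d\vec x=f_\varnothing$, which is exactly cancelled by the integral of the constant $f_\varnothing$ over the unit-measure torus, so the total is zero.

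For the inductive step I would assume the claim for every nonempty $\vec v$ with $|\vec v|<|\vec u|$ and integrate \eqref{eq:anova-terms} over all of $\vec x_{\vec u}$. The leading term again produces $\int_{\T^d}f(\vec x)\d\vec x=f_\varnothing$. The key observation for the subtracted sum is that each summand $f_{\vec v}(\vec x_{\vec v})$ with $\vec v\subset\vec u$ depends only on the variables indexed by $\vec v$; integrating over the remaining variables $\vec x_{\vec u\setminus\vec v}$ is therefore trivial and contributes only a factor $1$, so that $\int_{\T^{|\vec u|}}f_{\vec v}(\vec x_{\vec v})\d\vec x_{\vec u}=\int_{\T^{|\vec v|}}f_{\vec v}(\vec x_{\vec v})\d\vec x_{\vec v}$. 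By the induction hypothesis every such integral vanishes for $\vec v\neq\varnothing$, while the single remaining term $\vec v=\varnothing$ yields $f_\varnothing$. Hence the subtracted sum equals $f_\varnothing$ and cancels the leading term, completing the induction.

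The argument is essentially bookkeeping; the only point that needs care — and what I regard as the crux — is the clean separation of the integration domain in the subtracted terms, namely recognizing that integrating a lower-order ANOVA term $f_{\vec v}$ over variables it does not depend on reduces \emph{exactly} to integrating it over its own variables, so that the induction hypothesis applies verbatim. The unit measure of $\T$ is what makes this reduction exact rather than introducing spurious normalizing constants.
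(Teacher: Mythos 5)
Your proof is correct and follows essentially the same route as the paper's: induction on $|\vec u|$, integrating the defining recursion \eqref{eq:anova-terms}, collapsing the leading term to $f_\varnothing$ via Fubini, and using the unit measure of $\T$ together with the induction hypothesis to reduce the subtracted sum to the single term $f_\varnothing$, which cancels. Your write-up is in fact slightly more careful than the paper's (which blurs $\subset$ versus $\subseteq$ in its display), but the argument is the same.
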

\begin{proof}
The result follows by induction over $|\vec u|$ by
\begin{align*}
\int_{\T^{|\vec u|}}f_{\vec u}(\vec x_{\vec u})\d \vec x_{\vec u}&=\int_{\T^{|\vec u|}}\int_{\T^{d-|\vec u|}}f(\vec x)\d \vec x_{\vec u^c}\d \vec x_{\vec u}-
\int_{\T^{|\vec u|}}\sum_{\vec v\subseteq \vec u}f_{\vec v}(\vec x_{\vec v})\d \vec x_{\vec u}\\
&=\int_{\T^d}f(\vec x)\d \vec x-\sum_{\vec v\subseteq \vec u}\int_{\T^{|\vec v|}}f_{\vec v}(\vec x_{\vec v})\d \vec x_{\vec v}
\\
&=f_\varnothing-f_\varnothing=0.\qedhere
\end{align*}
\end{proof}
In order to show orthogonality of the ANOVA terms, we use the following lemma.
\begin{lemma}\label{lem:integral_x_j}
Let $f\in L_2(\T^d)$ and $\vec u\neq \varnothing$. Then for the ANOVA terms from Definition~\ref{def:anova-terms} we have
$$\int_{\T} f_{\vec u}(\vec x_{\vec u})\d x_j=0$$ for $j\in \vec u$. 
\end{lemma}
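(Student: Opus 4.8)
The plan is to argue by induction on the cardinality $|\vec u|$, exploiting the recursive structure of Definition~\ref{def:anova-terms}. For the base case $|\vec u|=1$, say $\vec u=\{j\}$, one has $f_{\{j\}}(x_j)=\int_{\T^{d-1}}f(\vec x)\d\vec x_{j^c}-f_\varnothing$, so integrating in $x_j$ recovers the full integral $\int_{\T^d}f(\vec x)\d\vec x=f_\varnothing$, and the two contributions cancel.

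For the inductive step I would fix $\vec u$ with $|\vec u|\geq 2$ and $j\in\vec u$, and assume the claim holds for all subsets of smaller cardinality. Applying $\int_\T\,\cdot\,\d x_j$ to the defining identity for $f_{\vec u}$ splits the right-hand side into an integral term and a sum over the proper subsets $\vec v\subset\vec u$. The integral term becomes $\int_{\T^{d-|\vec u|+1}}f(\vec x)\d\vec x_{\vec u^c\cup\{j\}}$, i.e. the integral of $f$ over the complement of $\vec u\setminus\{j\}$.

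Next I would split the remaining sum $\sum_{\vec v\subset\vec u}\int_\T f_{\vec v}(\vec x_{\vec v})\d x_j$ according to whether $j\in\vec v$ or not. For every proper subset $\vec v$ containing $j$ the induction hypothesis applies (since $|\vec v|<|\vec u|$ and $j\in\vec v$), so its integral vanishes. For every $\vec v$ with $j\notin\vec v$ the integrand is independent of $x_j$, hence the integral just returns $f_{\vec v}(\vec x_{\vec v})$; these $\vec v$ are exactly the subsets $\vec v\subseteq\vec u\setminus\{j\}$. Thus the whole sum collapses to $\sum_{\vec v\subseteq\vec u\setminus\{j\}}f_{\vec v}(\vec x_{\vec v})$.

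The observation that closes the argument is that this collapsed sum is precisely the integral term: rearranging Definition~\ref{def:anova-terms} written for the set $\vec u\setminus\{j\}$ yields $\int_{\T^{d-|\vec u\setminus\{j\}|}}f(\vec x)\d\vec x_{(\vec u\setminus\{j\})^c}=\sum_{\vec v\subseteq\vec u\setminus\{j\}}f_{\vec v}(\vec x_{\vec v})$, and $(\vec u\setminus\{j\})^c=\vec u^c\cup\{j\}$. The two terms therefore cancel and the integral is zero. I expect the only real obstacle to be the subset bookkeeping — cleanly separating the $\vec v$ that contain $j$ from those that do not, and verifying that the latter reconstruct the lower-order ANOVA term via its own defining recursion; once this is seen the cancellation is immediate.
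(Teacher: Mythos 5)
Your proposal is correct and follows essentially the same route as the paper: induction on $|\vec u|$, splitting the sum over proper subsets $\vec v\subset\vec u$ according to whether $j\in\vec v$ (killed by the induction hypothesis) or $j\notin\vec v$ (returns $f_{\vec v}$), and identifying the integral term with $\sum_{\vec v\subseteq\vec u\setminus\{j\}}f_{\vec v}$ via the defining recursion for $\vec u\setminus\{j\}$. The only cosmetic difference is the base case, which you verify by direct computation while the paper cites Lemma~\ref{lem:meanzero}; these amount to the same calculation.
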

\begin{proof}
The proof is by induction on $|\vec u|$. For $\vec u=\{j\}$ this follows from Lemma~\ref{lem:meanzero}. 
Now suppose that $\int_\T f_{\vec v}(\vec x_{\vec v})\d x_j=0$ for $j\in \vec v$ whenever $1\leq |\vec v|\leq r<d$. 
Choose $\vec u$ with $|\vec u|=r+1$ and pick $j\in \vec u$. To complete the induction, we calculate
\begin{align*}
\int_{\T}f_{\vec u}(\vec x_\vec u)\d x_j&=\int_\T\int_{\T^{d-|\vec u|}}f(\vec x)\d \vec x_{\vec u^c}-\sum_{\vec v\subseteq \vec u}f_{\vec v}(\vec x_{\vec v})\d x_j\\
&=\int_\T\int_{\T^{d-|\vec u|}}f(\vec x)\d \vec x_{\vec u^c}\d x_j-\sum_{\vec v\subseteq \vec u}\int_\T f_{\vec v}(\vec x_{\vec v})\d x_j\\
&=f_{\vec u\backslash j}(\vec x_{\vec u\backslash j})+\sum_{\stackrel{\vec v\subset \vec u}{j\notin\vec v}}f_\vec v(\vec x_\vec v)-\sum_{\stackrel{\vec v\subseteq \vec u}{j\notin\vec v}}f_{\vec v}(\vec x_{\vec v})
=0.\qedhere
\end{align*}
\end{proof}
This lemma establishes $L_2(\T^d)$-orthogonality of the ANOVA-terms:
\begin{lemma}
Let $f\in L_2(\T^d)$. Then the ANOVA-terms from Definition~\ref{def:anova-terms} are orthogonal, i.e. for $\vec u\neq\vec v\subseteq [d]$ 
$$\langle f_{\vec u},f_{\vec v}\rangle=\int_{\T^d}f_{\vec u}(\vec x_\vec u)\overline{f_\vec v(\vec x_\vec v)}\d \vec x=0.$$
\end{lemma}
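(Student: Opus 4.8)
The plan is to reduce everything to Lemma~\ref{lem:integral_x_j}, which guarantees that each nonempty ANOVA term $f_{\vec u}$ integrates to zero in every one of its active variables $x_j$ with $j\in\vec u$. The decisive combinatorial observation is that two distinct index sets $\vec u\neq\vec v\subseteq[d]$ cannot coincide, so at least one of the set differences $\vec u\setminus\vec v$ and $\vec v\setminus\vec u$ is nonempty; hence there is an index $j$ belonging to exactly one of the two sets. Since $\langle f_{\vec u},f_{\vec v}\rangle=\overline{\langle f_{\vec v},f_{\vec u}\rangle}$, one of these inner products vanishes if and only if the other does, so by swapping the roles of $\vec u$ and $\vec v$ if necessary I may assume without loss of generality that $j\in\vec u\setminus\vec v$.

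With such a $j$ fixed, $f_{\vec u}(\vec x_{\vec u})$ genuinely depends on $x_j$ while $\overline{f_{\vec v}(\vec x_{\vec v})}$ does not, because $j\notin\vec v$; moreover $\vec u\neq\varnothing$ since it contains $j$, so Lemma~\ref{lem:integral_x_j} is applicable to $f_{\vec u}$. I would then invoke Fubini's theorem to carry out the $x_j$-integration innermost — this is legitimate because $f\in L_2(\T^d)\subseteq L_1(\T^d)$ and the ANOVA terms inherit square-integrability, so the integrand lies in $L_1(\T^d)$ — and pull the $x_j$-independent factor out of the inner integral to obtain
$$\langle f_{\vec u},f_{\vec v}\rangle=\int_{\T^{d-1}}\overline{f_{\vec v}(\vec x_{\vec v})}\,\Big(\int_\T f_{\vec u}(\vec x_{\vec u})\d x_j\Big)\d\vec x_{[d]\setminus\{j\}}.$$
The inner integral is identically zero by Lemma~\ref{lem:integral_x_j}, so the entire expression vanishes, which is exactly the claim.

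I do not anticipate a genuine obstacle, since all the analytic content has already been packaged into Lemma~\ref{lem:integral_x_j} and what remains is the bookkeeping of which variable to eliminate first together with a routine application of Fubini. The only point deserving a little care is the case distinction encoded in the symmetry reduction: one must make sure the separating index $j$ lies in the active variable set of the factor being integrated. The degenerate case $\vec u=\varnothing$ is automatically subsumed, as then the separating index necessarily belongs to $\vec v$, and one simply integrates the $\overline{f_{\vec v}}$ factor instead, again using Lemma~\ref{lem:integral_x_j} (after conjugation).
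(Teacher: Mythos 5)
Your proof is correct and follows essentially the same route as the paper: pick an index $j$ lying in exactly one of the two sets (WLOG $j\in\vec u\setminus\vec v$), integrate $x_j$ innermost, and conclude by Lemma~\ref{lem:integral_x_j}. The extra care you take — justifying the WLOG via conjugate symmetry, invoking Fubini (where the real reason the integrand is in $L_1(\T^d)$ is Cauchy--Schwarz applied to the product of two $L_2$ factors), and treating $\vec u=\varnothing$ separately — only makes explicit what the paper leaves implicit.
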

\begin{proof}
Since $\vec u \neq \vec v$, there either exists $j \in \vec u$ with $j \notin \vec v$, or $j \in v$ with $j \notin u$.
Without loss of generality suppose that $j \in u$ and $j \notin v$. We integrate $x_j$ out of $f_\vec u f_\vec v$ as follows:
\begin{align*}
\int_{\T^d}f_{\vec u}(\vec x_\vec u)\overline{f_\vec v(\vec x_\vec v)}\d\vec x&=\int_{\T^{d-1}}\int_\T f_{\vec u}(\vec x_\vec u)\overline{f_\vec v(\vec x_\vec v)} \d x_j\d\vec x_{j^c}\\
&=\int_{\T^{d-1}}\int_\T f_{\vec u}(\vec x_\vec u)\d x_j \overline{f_\vec v(\vec x_\vec v)} \d\vec x_{j^c}=0,
\end{align*}
using Lemma~\ref{lem:integral_x_j} for the inner integral.
\end{proof}

In order to get a notion of the importance of single terms compared to the entire function, we define the \textit{variance} of a function by 
\begin{equation}\label{eq:sigma}\sigma^2(f):=\int_{\T^d}\left|f(\vec x)-\int_{\T^d}f(\vec x)\d \vec x\right|^2\d \vec x=\int_{\T^d}|f(\vec x)|^2\d \vec x-f_\varnothing^2.\end{equation}
The idea of the ANOVA decomposition is to analyze which combinations of the input variables $x_j$ play a role for 
the approximation of $f$, i.e. which ANOVA terms are necessary to approximate the function $f$ and which terms can be omitted. The variances 
of the ANOVA terms indicate their importance, i.e. if an ANOVA-term has high variance, this term contributes 
much to the variance of $f$. For that reason we do the following.
For subsets $\vec u\subseteq [d]$ with $\vec u\neq \varnothing$ the \textit{global sensitivity indices} (gsi)
\cite{So01} are then defined as
\begin{equation}\label{eq:def_gsi}
\rho(\vec u,f):=\frac{\sigma^2(f_{\vec u})}{\sigma^2(f)}\in [0,1],
\end{equation}
where the variance of the ANOVA term $f_{\vec u}$ is 
\begin{equation*}\label{eq:sigma_fu}
\sigma^2(f_{\vec u})= \int_{\T^{|\vec u|}}|f_{\vec u}(\vec x_{\vec u})|^2\d \vec x_{\vec u},
\end{equation*}
since the mean of the ANOVA terms is zero.
The $L_2(\T^d)$-orthogonality of the ANOVA terms implies that the variance of $f(\vec x)$ for $L_2(\T^d)$-functions $f$ can be decomposed as
$$\sigma^2(f)=\sum_{\stackrel{\vec u\subseteq [d]}{\vec u\neq \varnothing}}\sigma^2(f_{\vec u}).$$
This implies
$$\sum_{\stackrel{\vec u\subseteq [d]}{\vec u\neq \varnothing}}\rho(\vec u,f)=1.$$
The global sensitivity index $\rho(\vec u,f)$ represents the proportion of variance of $f(\vec x)$ explained by the interaction between 
the variables indexed by $\vec u$. The knowledge of the indices is very helpful for understanding the influence 
of the inputs, but the computation of these relies on the 
computation of the integrals of equation~\eqref{eq:anova-terms}. Following \cite{DuGiRoCa13} we want to perform 
the sensitivity analysis on a function $g$, which approximates $f$.
\begin{remark}
Instead of using the conventional Lebesgue measure $\d \vec x$, we can instead use another measure, which 
leads to a possible different ANVOVA-decomposition. But $L_2(\T^d)$-orthogonality of the ANOVA terms still 
holds. Using for instance the Dirac measure located at a point $\vec a$, which is a simple evaluation of $f$ at $\vec a$, 
yields to the \textit{anchored ANOVA decomposition}, see \cite{KuSlWaWo09}. 
\end{remark} 

\section{Approximation with wavelets}\label{sec:wavelets}
In this section we will use periodized, translated and dilated wavelets to approximate periodic functions. 
We will give some theoretical results, which apply on general wavelets that fulfill the properties in Definition~\ref{def:ppp}.
The main result of this section is Theorem~\ref{thm:norm-psi*}, which generalizes Theorem~\ref{thm:strong_characterization_nd} to the multivariate and dual case. 
In contrast to Theorem~\ref{thm:d_charakpsi} this is a sharp characterization of the decay of wavelet coefficients for functions in $H^s_\mix(\T^d)$ or $\bB_{2,\infty}^s(\T^d)$.

\subsection{Wavelet spaces}\label{sec:chui}
In order to get basis functions on $\T$, we use $1$-periodized functions. 
We first introduce the B-splines.
\begin{definition}
For $m\in \N$ we define the cardinal B-spline $B_m\colon \R\to \R$ of order $m$ as a piecewise polynomial function recursively by
\begin{equation}\label{eq:BSpline}
B_1(x)= 
\begin{cases}
1, & -\tfrac 12<x<\tfrac 12\\
0, &\text{otherwise},
\end{cases} \quad \text{and } \quad B_m(x) = \int_{x-\tfrac 12}^{x+\tfrac 12} B_{m-1}(y)\d y.
\end{equation}
\end{definition}
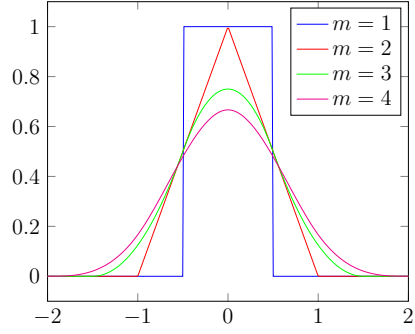
\begin{figure}
\centering
    \begin{tikzpicture}[scale=0.7,
declare function={
    func1(\x)= and(\x >= -0.5, \x < 0.5) * (1)   +0;
    func2(\x)= and(\x >= 0, \x <1) * (1-\x)   +and(\x >= -1, \x <0) * (1+\x)+0;
		func3(\x)= and(\x >= -3/2, \x <-0.5) * (1.125+\x*(1.5+\x*(0.5))) +
								and(\x >= -0.5, \x <0.5) * (0.75+\x*(-\x)) +
								and(\x >= 0.5, \x <1.5) * (1.125+\x*(-1.5+\x*(0.5)));
		func4(\x)=	and(\x >= -2, \x <-1) * (4/3+\x*(2+\x*(1+\x*(1/6)))) +
								and(\x >= -1, \x <0) * (2/3+\x*(0+\x*(-1+\x*(-0.5)))) +
								and(\x >= 0, \x <1) * (2/3+\x*(0+\x*(-1+\x*(+0.5)))) +
								and(\x >= 1, \x <2) * (4/3+\x*(-2+\x*(1+\x*(-1/6))))
		;				
  }			
]

\begin{axis}[xmin=-2,xmax=2,
legend entries={$m=1$,$m=2$,$m=3$,$m=4$}
]

\addplot[domain = -2:2,blue,samples = 300]{func1(x)};
\addplot[domain = -2:2,red,samples = 300]{func2(x)};
\addplot[domain = -2:2,green,samples = 300]{func3(x)};
\addplot[domain = -2:2,magenta,samples = 300]{func4(x)};

\end{axis}
\end{tikzpicture}
 \caption{Cardinal B-splines of order $m=1,2,3,4$} 
\end{figure}
The function $B_m(x)$ is a piecewise polynomial function of order $m-1$. Furthermore, the support 
of $B_m(x)$ is $(-\tfrac m2,\tfrac m2)$ and they are normalized by $\int_{-\tfrac m2}^{\tfrac m2}B_m(x)\dx x =1$. 
We introduce the function spaces $V_j$ by 
$V_j=\lin \{\phi(2^j\cdot -k)\mid k\in \Z\}$,
where the function $\phi$ is some scaling function, for example one can use here the $m$-th order cardinal B-spline $B_m(x)$ as scaling function $\phi$.
Consequently, we have
$$\cdots \subset V_{-1}\subset V_{0}\subset V_{1}\subset V_2\subset \cdots.$$
From the nested sequence of spline subspaces $V_j$, we build the the orthogonal complementary subspaces $W_j$, namely
$$V_{j+1}=V_j\oplus W_j,\quad j\in \N_0.$$
These subspaces are mutually orthogonal. Hence, we write 
\begin{equation}\label{eq:decomp} 
L_2(\R) = V_0\oplus W_0 \oplus W_1 \oplus \cdots
\end{equation}
We are interested in a \textit{wavelet function} $\psi\in W_0$ that generates the subspaces $W_j$ in the sense that 
$$W_j=\lin\{\psi_{j,k}\mid k\in \Z\},$$
where 
\begin{equation*}\label{eq:def_psis}
\psi_{j,k}(x)=2^{j /2}\psi(2^jx-k), \quad j\in \N_0, k\in \Z.
\end{equation*}
Thus, we use a normalization such that $\norm{\psi_{j,k}}_{L_2(\T)}=1$. 
\begin{example}\label{ex:Chui-Wang}
An example for the scaling function is the cardinal B-spline, $\phi = B_m$ of order $m$. The corresponding wavelet functions are the  
Chui-Wang wavelets \cite{chui92}, which are given by
\begin{equation*}
\psi(x) = \sum_{n}q_n B_m(2x-n-\tfrac m2),
\end{equation*}
where 
\begin{equation*}
q_n=\frac{(-1)^n}{2^{m-1}}\sum_{k=0}^m\binom{m}{k}B_{2m}(n+1-k-m).
\end{equation*}
\end{example}

\begin{figure}[ht]
\begin{subfigure}[c]{0.32\textwidth}
\includegraphics[width=\textwidth]{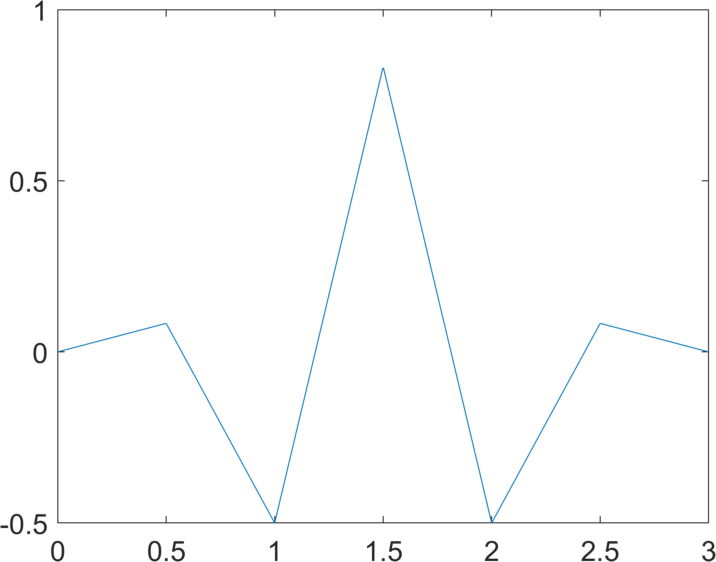}
\end{subfigure}
\begin{subfigure}[c]{0.32\textwidth}
\includegraphics[width=\textwidth]{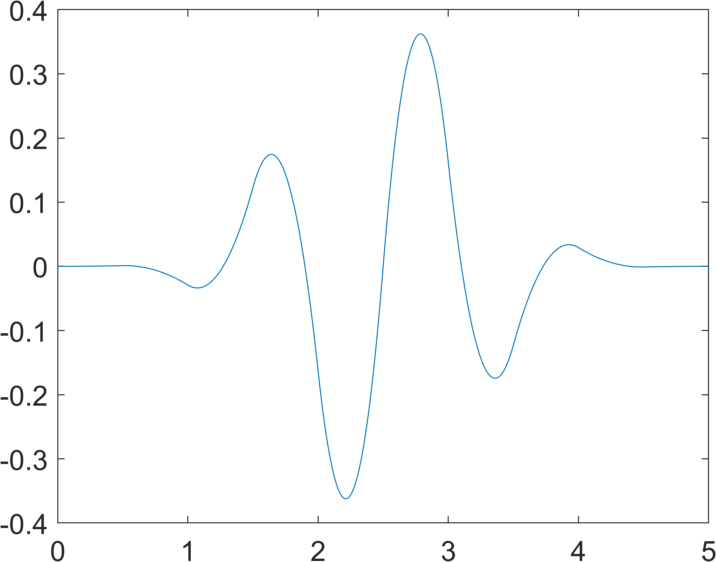}
\end{subfigure}
\begin{subfigure}[c]{0.32\textwidth}
\includegraphics[width=\textwidth]{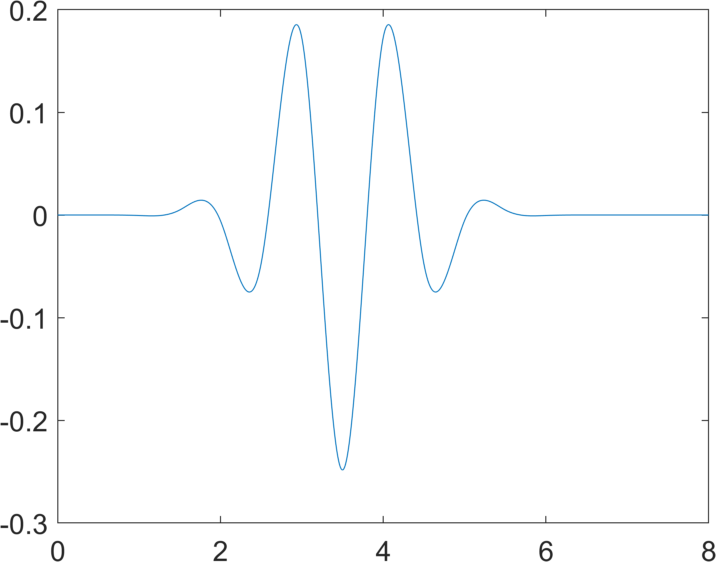}
\end{subfigure}
\caption{Chui-Wang-wavelets of order $m=2$ (left), $m=3$ (middle) and $m=4$ (right).}
\end{figure}
In order to approximate periodic functions, we use $1$-periodized versions of these wavelets,
$$\psi_{j,k}^\per(x)=\sum_{\ell\in \Z}\psi_{j,k}(x+\ell),$$
where we first have to dilate and then periodize. 
To get a periodic decomposition of the form \eqref{eq:decomp}, we have to periodize the scaling function $\phi\in V_0$ by
$$\phi^\per(x)= \sum_{\ell\in \Z}\phi(x+\ell)=1,$$
see \cite[Section 9.3]{dau92}, which is the constant function, denoted by $1_\T$. 
To simplify notation, we denote the scaling function $\phi(\cdot-k)$ by $\psi_{-1,k}$ and $\psi^\per_{-1,0}=\phi^\per = 1_\T$.

We want to approximate a function $f\in L_2(\T)$ by some function in $V_j$. 
If the wavelets do not induce an orthonormal basis, but a Riesz-basis for every index $j$, we have to consider the dual basis $\psi_{j,k}^*$, which has the property
$$\langle \psi_{j,k}, \psi_{i,\ell}^*\rangle=\delta_{i,j}\delta_{k,\ell}.$$
The usage of wavelets allow us a decomposition of a non-periodic $f\in L_2(\R)$ in terms of $\psi_{j,k}$, using \eqref{eq:decomp} by 
\begin{equation*}
f = \sum_{k\in \Z}\langle f ,\psi_{-1,k}^*\rangle\psi_{-1,k}+\sum_{j\geq 0}\sum_{k\in \Z}\langle f,\psi_{j,k}^* \rangle \psi_{j,k}. 
\end{equation*}
If $\{\psi_{j,k}(x)\}_k$ is an orthonormal basis for $W_j$, the dual wavelet and the wavelet coincide, i.e. $\psi_{j,k}=\psi_{j,k}^*$. 
The periodized version of \eqref{eq:decomp} is
\begin{equation}\label{eq:decomp_per} 
L_2(\T) = V_0^\per \oplus W_0^\per \oplus W_1^\per \oplus \cdots.
\end{equation} 
Hence, we can decompose every function $f\in L_2(\T)$ as
\begin{equation}\label{eq:decompf}
f = \langle f ,\psi_{-1,0}^\per \rangle \psi_{-1,0}^\per+\sum_{j\geq 0}\sum_{k=0}^{2^j-1}\langle f,\psi^{\per *}_{j,k} \rangle \psi^\per_{j,k}, 
\end{equation}
since the periodization of $\psi_{j,k}$ reduces the parameter $k$ to the range $\{0,\ldots 2^j-1\}$ and the 
periodization of $\psi_{-1}$ makes the parameter $k$ in the first term obsolete. 
The periodization inherits indeed the orthogonality in \eqref{eq:decomp_per}, which can be seen by setting $y = x+\ell'$ and $\tilde{\ell}=\ell-\ell'$ in 
\begin{align}\label{eq:sca_psi_per}
\langle\psi_{j,k}^\per,\psi_{j',k'}^\per\rangle &= 2^{\tfrac{j+j'}{2}}\int_\T \sum_{\ell\in \Z}\sum_{\ell'\in \Z} \psi(2^jx+2^j\ell-k)\psi(2^{j'}x+2^j\ell'-k')\d x\notag\\
&= 2^{\tfrac{j+j'}{2}} \sum_{\tilde{\ell}\in \Z}\sum_{\ell'\in \Z} \int_{-\tfrac 12+\ell'}^{\tfrac 12+\ell'}\psi_{2^jy+2^j\ell''-k}\psi_{2^{j'}y-k'}\d y\notag\\
&= \sum_{\tilde{\ell}\in \Z}2^{\tfrac{j+j'}{2}}\int_{-\infty}^{\infty}\psi(2^jy+2^j\tilde{\ell}-k)\psi(2^{j'}y-k')\d y\notag\\
&=\sum_{\tilde{\ell}\in \Z}\langle\psi_{j,-2^j\tilde{\ell}+k},\psi_{j',k'}\rangle
=\begin{cases} 0 &\text{ if } j\neq j'\\
\langle\psi_{j,k},\psi_{j,k'}\rangle&\text{ if } j = j'\text{ and } 2^j>\supp\psi,\\
\sum_{\ell\in \Z}\langle \psi_{j,k-2^j\ell}, \psi_{j,k'}\rangle &\text{ if }j = j'\text{ and }2^j\leq \supp\psi.\end{cases}
\end{align}
 
In order to give theoretical error estimates, we require some properties of the wavelets.
\begin{definition}\label{def:ppp}
We define the following properties of a wavelet $\psi:\R\to \R$.
\begin{itemize}
	\item The wavelet $\psi(x)$ has compact support, i.e.,
	\begin{equation}\label{eq:support}
	\supp \psi=[0,S]. \tag{P1}
	\end{equation}
	\item The wavelet has vanishing moments of order $m$, i.e.
	\begin{equation}\label{eq:moments}
\int_{-\infty}^{\infty} \psi(x)x^\beta \d x=0,\quad \beta = 0,\ldots ,m-1.\tag{P2}
\end{equation}
	\item The periodized wavelets form a Riesz-Basis for every index $j$ with
	\begin{equation}\label{eq:Riesz}
\gamma_m \sum_{k=0}^{2^j-1} |d_{j,k}|^2\leq \left\|\sum_{k=0}^{2^j-1}d_{j,k}\psi_{j,k}^\per(x)\right\|_{L_2(\T)}^2\leq  \delta_m \sum_{k=0}^{2^j-1} |d_{j,k}|^2\tag{P3}.
\end{equation}
\end{itemize}
\end{definition}
For example, the Chui-Wang wavelets of order $m$ in Example \ref{ex:Chui-Wang} fulfill all three properties. 
These wavelets have compact support with
$\supp \psi=[0,2m-1]$,
they have vanishing moments of order $m$ and the periodic Chui-Wang wavelets represent a Riesz-basis for every index $j$, 
\cite[Theorem 3.5.]{PlTa94} with the constants from there, see Table~\ref{tab:Riesz-const}.
\begin{table}[hbt]\centering
\begin{tabular}{c|ccccc}
    \hline
    $m$   & $1$ &$2$&$3$&$4$ &$5$\\
    \hline
		$\gamma_m$ & $1$ &$0.14814815$	&$0.03792593$ &$0.01005993$&$0.00267766$\\
    $\delta_m$ & $1$ &$0.33333333$	&$0.13386795$ &$0.05938886$&$0.02785522$\\
		\hline

  \end{tabular}
\caption{Riesz constants for the Chui-Wang wavelets from \cite{PlTa94}.}
\label{tab:Riesz-const}
\end{table}

To generalize the one-dimensional wavelets to higher dimensions, we use the following tensor-product approach. To this end we define the multi-dimensional wavelets
\begin{equation*}
\psi_{\vec j,\vec k}(\vec x)=\prod_{i=1}^d\psi_{j_i,k_i}(\vec x_i), \quad \vec x=(x_1,\ldots,x_d),
\end{equation*}
where $\vec j\in \Z^d$ and $\vec k=(k_i)_{i=1}^d\in \Z^d$ are multi-indices. Analogously, we define the $1$-periodized versions
\begin{equation}\label{eq:psi_per_multi}
\psi^\per_{\vec j,\vec k}(\vec x)=\prod_{i=1}^d\psi^\per_{j_i,k_i}(\vec x_i),
\end{equation}
where $\vec j=(j_i)_{i=1}^d,\, j_i\in \{-1,0,2,\ldots\}$ and $\vec k=(k_i)_{i=1}^d$ are multi-indices $\vec k\in \I_{\vec j}$. Hence, we define the sets 
\begin{equation*}
\I_{\vec j}=\times_{i=1}^d\begin{cases} \{0,1,\ldots 2^{j_i}-1\}&\text{ if } j_i\geq 0,\\
\{0\} &\text{ if } j_i=-1.
\end{cases}
\end{equation*}
In an analogous way we define the multi-variate dual wavelets and their periodization.

 \subsection{Boundedness of wavelet coefficients for mixed regularity}\label{sec:bounded}
The following results are essentially known and appear in several papers \cite{DKT98,GrOsSc99,SiUl09} 
in various different settings. We decided to give a rather simple and elementary proof of the one-sided sharp wavelet 
characterization in our periodic setting. We would like to point out that 
the vanishing moments of order $m$ of these wavelets play a crucial role for the partial characterization which we have in mind.  
Our proof can be easily extended to $1<p<\infty$. Note, that the analysis in \cite{GrOsSc99} relies on proper 
Jackson and Bernstein inequalities. 

The relevant function spaces are defined in the appendix. In order to analyze a best-approximation error of the function space $V_j$, we characterize the $H^s(\T)$-norm of a 
function by a sequence-norm of the wavelet-coefficients.
\begin{lemma}\label{lem:1d_charakpsi}
Let $f\in H^m(\T)$ and $\psi$ a wavelet, which is compactly supported, see~\eqref{eq:support}, and has vanishing moments of order $m$, see~\eqref{eq:moments}. Then there exists a constant $C$, which depends on $m$, such that
\begin{equation*}
\sup_{j\geq -1} 2^{ j m}\left(\sum_{k\in \I_j}|\langle f,\psi_{j,k}^\per\rangle|^2\right)^{1/2}\leq C \norm{f}_{H^m(\T)}.
\end{equation*}
\end{lemma}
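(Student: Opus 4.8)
The plan is to reduce everything to the real line and then exploit the vanishing moments~\eqref{eq:moments} through a Taylor expansion. First I would treat the level $j=-1$ separately: there $\psi_{-1,0}^\per = 1_\T$, so $\langle f,\psi_{-1,0}^\per\rangle = c_0(f)$ and $2^{-m}|c_0(f)| \le \norm{f}_{L_2(\T)} \le \norm{f}_{H^m(\T)}$, which is harmless. For $j\ge 0$, using that $f$ is $1$-periodic I would unfold the periodization and write
$$\langle f,\psi_{j,k}^\per\rangle = \int_\R f(y)\,\psi_{j,k}(y)\d y = 2^{j/2}\int_\R f(y)\,\psi(2^j y - k)\d y,$$
where $f$ denotes its periodic extension. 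By~\eqref{eq:support} the integrand is supported on $I_{j,k}=[k2^{-j},(k+S)2^{-j}]$, an interval of length $S2^{-j}$.

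Next I would invoke the vanishing moments. Since $\int_\R \psi(u)u^\beta\d u =0$ for $\beta=0,\dots,m-1$, an affine change of variables shows $\int_\R P(y)\psi_{j,k}(y)\d y = 0$ for every polynomial $P$ of degree $\le m-1$. Choosing $P$ to be the degree-$(m-1)$ Taylor polynomial of $f$ at the left endpoint of $I_{j,k}$, I may replace $f$ by $f-P$ in the inner product without changing its value. The integral form of the Taylor remainder together with Cauchy--Schwarz yields the pointwise bound $|f(y)-P(y)| \lesssim (S2^{-j})^{m-1/2}\norm{f^{(m)}}_{L_2(I_{j,k})}$ on $I_{j,k}$, and hence $\norm{f-P}_{L_2(I_{j,k})} \lesssim 2^{-jm}\norm{f^{(m)}}_{L_2(I_{j,k})}$, with constants depending only on $m$ and $S$.

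I would then estimate a single coefficient by Cauchy--Schwarz against $\psi$, using $\norm{\psi(2^j\cdot-k)}_{L_2(\R)} = 2^{-j/2}\norm{\psi}_{L_2(\R)}$, to obtain $|\langle f,\psi_{j,k}^\per\rangle| \le \norm{\psi}_{L_2(\R)}\norm{f-P}_{L_2(I_{j,k})} \lesssim 2^{-jm}\norm{f^{(m)}}_{L_2(I_{j,k})}$. Squaring and summing over $k=0,\dots,2^j-1$, the decisive point is that the dilated supports $I_{j,k}$ have bounded overlap: each point lies in at most $\lceil S\rceil$ of them. Since $f^{(m)}$ is $1$-periodic, this gives $\sum_k \norm{f^{(m)}}_{L_2(I_{j,k})}^2 \lesssim \norm{f^{(m)}}_{L_2(\T)}^2$, whence $2^{jm}\big(\sum_k|\langle f,\psi_{j,k}^\per\rangle|^2\big)^{1/2} \lesssim \norm{f^{(m)}}_{L_2(\T)} \lesssim \norm{f}_{H^m(\T)}$ uniformly in $j$. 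Taking the supremum over $j\ge -1$ finishes the argument.

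The main obstacle is the final step: passing from the sum of local norms $\sum_k\norm{f^{(m)}}_{L_2(I_{j,k})}^2$ to the single global norm $\norm{f^{(m)}}_{L_2(\T)}^2$. This relies on the finite-overlap geometry of the supports and on careful bookkeeping of the wrap-around near the boundary of $[0,1]$, where the periodized wavelets whose support crosses the endpoint must be accounted for via the periodicity of $f^{(m)}$. Everything else is a routine Taylor remainder estimate powered by the vanishing moments.
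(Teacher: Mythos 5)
Your proof is correct, and it reaches the estimate by a genuinely different mechanism than the paper. The paper's proof constructs the $m$-fold antiderivative $\Psi_m(x)=\int_{-\infty}^x \frac{\psi(t)(x-t)^{m-1}}{(m-1)!}\d t$, observes that the vanishing moments \eqref{eq:moments} force $\Psi_m$ to be compactly supported on $[0,S]$ as well, and then integrates by parts $m$ times to shift all derivatives onto $f$, obtaining $|\langle f,\psi_{j,k}^\per\rangle|\leq 2^{j/2}2^{-jm}\int \left|f^{(m)}\right|\,\left|\sum_{\ell}\Psi_m(2^j\cdot+2^j\ell-k)\right|$, followed by the same Cauchy--Schwarz and bounded-overlap summation that you use. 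You instead read the vanishing moments as orthogonality to polynomials of degree $\leq m-1$, subtract the local Taylor polynomial, control the integral remainder via Cauchy--Schwarz, and pair directly with $\psi_{j,k}$. The two routes are connected by Fubini (interchanging the order of integration in your remainder term essentially reconstructs $\Psi_m$), but they are organized differently and each buys something: your argument is more self-contained --- no auxiliary function, no justification of term-by-term differentiation of the periodized $\Psi_m$ --- and it makes the role of \eqref{eq:moments} completely transparent; the paper's construction pays off later, since the multivariate Theorem~\ref{thm:d_charakpsi} is proved by tensorizing $\Psi_m$ and integrating by parts coordinate-wise, which is cleaner than a mixed multivariate Taylor expansion with remainders in several variables would be. Your treatment of the two delicate points --- unfolding the periodization into an integral over $\R$ against the periodic extension of $f$ (legitimate, since $f\in H^m(\T)$ extends to a locally $H^m$ function on $\R$), and the at most $\lceil S\rceil$-fold overlap of the supports $I_{j,k}$ including the wrap-around at the period boundary --- coincides with the bookkeeping in the paper, and the resulting constant depends only on $m$, $S$ and $\norm{\psi}_{L_2(\R)}$, as required.
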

\begin{proof}
We define the function 
\begin{equation*}
\Psi_m(x)=\int_{-\infty}^x \frac{\psi(t)(x-t)^{m-1}}{(m-1)!}\d t.
\end{equation*}
Note that this function is defined using the non-periodic wavelet function $\psi$, which has compact 
support on $[0,S]$. Because of the moment condition \eqref{eq:moments} and the fact 
that $(x-t)^{m-1}$ is a polynomial of degree at most $m-1$, we have $\Psi_m(x)\rightarrow 0$ for $x\rightarrow \pm \infty$. 
Hence, $\Psi_m(x)$ has also support $[0,S]$.  
Furthermore, $m$-times differentiation yields
$$\Dx^m\Psi_m(x)=\psi(x).$$
The periodization of $\Psi_m$ is
$$\Psi_m^\per(x) = \sum_{\ell\in \Z}\Psi_m(x+\ell)=\sum_{\ell\in \Z}\int_{-\infty}^{x+\ell} \frac{\psi(t)(x+\ell-t)^{m-1}}{(m-1)!}\d t.$$
Since $\Psi_m$ has compact support, the summation over $\ell$ is finite and we can interchange differentiation and summation, which yields 
$\frac{\d^m }{\d x^m}\Psi_m^\per(x)=\psi^\per(x).$
Analogously, we confirm that
\begin{equation}\label{eq:ddm_psi_per}
\frac{\d^m }{\d x^m} \left(\sum_{\ell\in \Z}\int_{-\infty}^{x+\ell} \frac{\psi_{j,k}(t)(x+\ell-t)^{m-1}}{(m-1)!}\d t\right)=\psi_{j,k}^\per(x).
\end{equation}
Now we calculate the inner products by using variable substitutions
\begin{align}\label{eq:sca_f_psi}
|\langle f, \psi_{j,k}^\per\rangle|
&=\left|\int_\T \overline{f(x)}\psi_{j,k}^\per(x)\d x\right|
=\left|\int_\T \overline{f(x)}\frac{\d^m }{\d x^m}\sum_{\ell\in \Z}\int_{-\infty}^{x+\ell} \frac{\psi_{j,k}(t)(x+\ell-t)^{m-1}}{(m-1)!}\d t\d x\right|\notag\\
&=2^{j/2}\,2^{-j}\left|\int_\T \overline{f^{(m)}(x)}\sum_{\ell\in \Z}\int_{-\infty}^{2^jx+2^j\ell-k} \frac{\psi(t')(x+\ell-\tfrac{t'+k}{2^j})^{m-1}}{(m-1)!}\d t'\d x\right|\notag\\
&=2^{j/2}\,2^{-j}\,2^{-j(m-1)} \left|\int_\T \overline{f^{(m)}(x)}\sum_{\ell\in \Z}\int_{-\infty}^{2^jx+2^j\ell-k} \frac{\psi(t')(2^jx+2^j\ell-k -t)^{m-1}}{(m-1)!}\d t'\d x\right|\notag\\
&=2^{j/2}\,2^{-jm}\left|\int_\T \overline{f^{(m)}(x)}\sum_{\ell\in \Z}\Psi_m(2^j x+2^j\ell-k)\d x\right|\notag\\
&\leq  2^{j/2}\,2^{-jm}\sum_{\ell\in \Z}\int_{I_{\ell,j,k}}\left|\overline{f^{(m)}(x)}\Psi_m(2^j x+2^j\ell-k)\right|\d x,
\end{align}
where $I_{\ell,j,k}=\supp V(2^j\cdot +2^j\ell -k)$. Since $\Psi_m$ is supported on $[0,S]$, the interval 
$I_{\ell,j,k}$ is $[2^{-j}k-\ell,2^{-j}(S +k)-\ell]\cap [-\frac 12,\frac 12)$. We denote the union $\cup_{\ell\in \Z}I_{\ell,j,k}$ 
by $I_{j,k}$. Note that in this proof we put points that are in multiple $I_{\ell,j,k}$, 
multiple times in $I_{j,k}$. This is the case if $S>2^{j}$, hence $|I_{j,k}|=S\,2^{-j}$, which can be greater than $1$.
\begin{align}\label{eq:sca_f_psi2}
|\langle f, \psi_{j,k}^\per\rangle|&\leq 2^{j/2}\,2^{-jm}\int_{I_{j,k}}\left|\overline{f^{(m)}(x)}\Psi_m(2^j x-k)\right|\d x\notag\\
&\leq 2^{j/2}\,2^{-jm}\,\left(\int_{I_{j,k}}\left| f^{(m)}(x)\right|^2\d x\right)^{1/2} \left(\int_{I_{j,k}}\left|\Psi_m( x)\right|^2\d x\right)^{1/2}\notag\\
&\leq 2^{j/2}\,2^{-jm}\,\left(\int_{I_{j,k}}\left| f^{(m)}(x)\right|^2\d x\right)^{1/2} \max_{x\in\R}\Psi_m(x) \, |I_{j,k}|^\frac 12,\notag\\
&\leq 2^{-jm}\, S^{1/2}\,\max_{x\in\R}\Psi_m(x)\left\| f^{(m)}\right\|_{L_2(I_{j,k})}.
\end{align}
Summation over $k$ means to unite $I_{j,k}$ for all $k\in\I_j$. The intervals $I_{j,k}$ for fixed $j$  
overlap for different $k$, but at most $\left\lceil S\right\rceil$-times. Hence for  $I_j=\displaystyle\cup_{k=0}^{2j-1}I_{j,k}$, we have
$\int_{I_j}|g(x)|^2\d x\leq S\int_\T |g(x)|^2\d x$.
All together, this yields
\begin{equation}\label{eq:wavcoefpsi}
2^{jm}\left(\sum_{k=0}^{2^j-1}|\langle f,\psi_{j,k}^\per\rangle|^2\right)^{1/2}\leq C \norm{f^{(m)}}_{L_2(\T)}.
\end{equation}
For the scaling function we have
\begin{align}\label{eq:wavcoefphi}
|\langle f,1_\T\rangle|=\left|\int_\T f(x)\d x\right|\leq \norm{f}_{L^1(\T)}\leq\norm{f}_{L_2(\T)}.
\end{align}
Now the assertion follows from \eqref{eq:wavcoefpsi} and \eqref{eq:wavcoefphi}.
\end{proof} 
We use this one-dimensional Lemma to give a characterization for multi-dimensional functions. 
Instead of the function space $H^m(\T)$ we have to use functions from $H^m_\mix(\T^d)$. 
\begin{theorem}\label{thm:d_charakpsi}
Let $f\in H^m_{\mix}(\T^d)$ and $\psi^\per_{\vec j,\vec k}$ be the periodized, dilated and translated versions 
of a wavelet $\psi$, which is compactly supported, see~\eqref{eq:support}, and has vanishing moments of order $m$, see~\eqref{eq:moments}. 
Then there exists a constant $C$, which depends on $m$ and $d$, such that
\begin{equation*}\label{eq:weak_characterization_nd}
\sup_{\vec j\geq -\vec 1} 2^{ |\vec j|_1 m}\left(\sum_{\vec k\in \I_\vec j}|\langle f,\psi_{\vec j,\vec k}^\per\rangle|^2\right)^{\tfrac 12}\leq C \norm{f}_{H^m_\mix(\T^d)},
\end{equation*}
where we define the index norm $|\vec j|_1=\sum_{i,j_i\geq 0} j_i$. 
\end{theorem}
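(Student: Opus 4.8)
The plan is to reduce the $d$-dimensional statement to the one-dimensional Lemma~\ref{lem:1d_charakpsi} by induction on the dimension $d$, exploiting the tensor-product structure~\eqref{eq:psi_per_multi} of $\psi_{\vec j,\vec k}^\per$. The base case $d=1$ is exactly Lemma~\ref{lem:1d_charakpsi}, so I fix $d\geq 2$ and assume the assertion in dimension $d-1$. Throughout I would work with the equivalent description of the mixed norm as a finite sum of $L_2$-norms of mixed derivatives, $\norm{f}_{H^m_{\mix}(\T^d)}^2\sim\sum_{\vec 0\leq\vec\beta\leq m\vec 1}\norm{\partial^{\vec\beta}f}_{L_2(\T^d)}^2$ with $\partial^{\vec\beta}=\prod_{i=1}^d\partial_{x_i}^{\beta_i}$; this is the natural generalization of the $H^m(\T)$-norm appearing on the right-hand side of the one-dimensional lemma.

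First I would split off the first coordinate. Writing $\vec x=(x_1,\vec x')$ with $\vec x'=(x_2,\dots,x_d)$, and correspondingly $\vec j=(j_1,\vec j')$, $\vec k=(k_1,\vec k')$, the product structure gives $\langle f,\psi_{\vec j,\vec k}^\per\rangle=\langle F_{j_1,k_1},\psi_{\vec j',\vec k'}^\per\rangle$, where the partial coefficient $F_{j_1,k_1}(\vec x')=\int_\T f(x_1,\vec x')\,\psi_{j_1,k_1}^\per(x_1)\d x_1$ is a function of the remaining $d-1$ variables. For fixed $j_1,k_1$ the induction hypothesis applied to $F_{j_1,k_1}$ yields $2^{|\vec j'|_1 m}\big(\sum_{\vec k'}|\langle f,\psi_{\vec j,\vec k}^\per\rangle|^2\big)^{1/2}\leq C_{d-1}\norm{F_{j_1,k_1}}_{H^m_{\mix}(\T^{d-1})}$. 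Squaring, summing over $k_1$, and multiplying by $2^{2j_1 m}$ then reduces everything to estimating $2^{2j_1 m}\sum_{k_1}\norm{F_{j_1,k_1}}_{H^m_{\mix}(\T^{d-1})}^2$.

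The key observation for the last step is that a derivative $\partial^{\vec\beta'}$ in the variables $x_2,\dots,x_d$ commutes with the $x_1$-integration defining $F_{j_1,k_1}$, since $\psi_{j_1,k_1}^\per$ depends only on $x_1$; hence $\partial^{\vec\beta'}F_{j_1,k_1}(\vec x')=\langle\partial^{\vec\beta'}f(\cdot,\vec x'),\psi_{j_1,k_1}^\per\rangle$ is itself the direction-$1$ wavelet coefficient of $\partial^{\vec\beta'}f$. Applying Lemma~\ref{lem:1d_charakpsi} in the variable $x_1$ to $\partial^{\vec\beta'}f(\cdot,\vec x')$ for each fixed $\vec x'$ and integrating the squared inequality over $\vec x'$ gives, for $j_1\geq 0$, a bound of $2^{2j_1 m}\sum_{k_1}\norm{\partial^{\vec\beta'}F_{j_1,k_1}}_{L_2(\T^{d-1})}^2$ by $C^2\big(\norm{\partial^{\vec\beta'}f}_{L_2(\T^d)}^2+\norm{\partial_{x_1}^m\partial^{\vec\beta'}f}_{L_2(\T^d)}^2\big)$; for $j_1=-1$ one uses instead $\psi_{-1,0}^\per=1_\T$ with Cauchy--Schwarz, and the factor $2^{j_1 m}$ simply does not occur in $2^{|\vec j|_1 m}=\prod_{i:\,j_i\geq 0}2^{j_i m}$. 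Summing over the finitely many $\vec\beta'$ with $\vec 0\leq\vec\beta'\leq m\vec 1$ and using the norm equivalence, the occurring multi-indices $(0,\vec\beta')$ and $(m,\vec\beta')$ all satisfy $\vec 0\leq\cdot\leq m\vec 1$, so the whole expression is $\lesssim\norm{f}_{H^m_{\mix}(\T^d)}^2$. Combining this with the induction hypothesis and the identity $2^{|\vec j|_1 m}=2^{j_1 m}2^{|\vec j'|_1 m}$ (for $j_1\geq 0$) closes the induction with $C_d=C\cdot C_{d-1}$ up to the fixed norm-equivalence constants. I expect the main difficulty to be purely organizational rather than analytic: one must correctly interleave the two invocations of the one-dimensional lemma -- the first hidden inside the induction hypothesis acting on $\vec x'$, the second acting on $x_1$ after differentiating -- and keep the dyadic weights $2^{j_i m}$ and the $j_i=-1$ (scaling-function) directions precisely matched to the index norm $|\vec j|_1=\sum_{i:\,j_i\geq 0}j_i$.
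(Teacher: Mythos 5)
Your proof is correct, but it follows a genuinely different route from the paper's. The paper proves the statement directly in $d$ dimensions: it tensorizes the antiderivative from Lemma~\ref{lem:1d_charakpsi} to $\Psi_m(\vec x)=\prod_{i=1}^d\Psi_m(x_i)$, integrates by parts $m$ times simultaneously in every active coordinate $i\in\vec u=\{i\mid j_i\geq 0\}$, and then uses Cauchy--Schwarz plus the bounded overlap of the supports $I_{\vec j,\vec k}$, so that each block $\vec j$ is controlled by the single mixed derivative $\norm{\Dx^{(\vec 1_{(\vec u)}\,m)}f}_{L_2(\T^d)}\leq\norm{f}_{H^m_{\mix}(\T^d)}$. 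You instead induct on the dimension and use the univariate lemma only as a black box: once inside the induction hypothesis, applied to the partial coefficients $F_{j_1,k_1}$, and once slicewise in $x_1$, applied to $\partial^{\vec\beta'}f(\cdot,\vec x')$ after commuting the derivative with the $x_1$-integration, glued together by Tonelli and the equivalence of the mixed norm with the sum of $L_2$-norms of all mixed derivatives $\partial^{\vec\beta}$, $\vec 0\leq\vec\beta\leq m\vec 1$. The paper's direct approach buys sharper bookkeeping (exactly one mixed derivative per block $\vec j$, and nothing beyond the definition~\eqref{eq:Hmnorm} is needed); yours buys modularity, since all the analytic content (vanishing moments, compact support, overlap counting) stays confined to the one-dimensional lemma and the multivariate case becomes pure tensorization -- in effect an elementary, hands-on version of the abstract tensor-product argument the paper invokes via \cite{SiUl09} in the proof of Theorem~\ref{thm:strong_characterization_nd}. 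Your constant compounds as $C_d=C\cdot C_{d-1}$ times norm-equivalence factors, which is harmless since $C$ may depend on $m$ and $d$. Two points you use implicitly deserve a sentence in a full write-up: that $F_{j_1,k_1}\in H^m_{\mix}(\T^{d-1})$ (Cauchy--Schwarz in $x_1$ applied to each mixed derivative of $f$), so the induction hypothesis is applicable, and that $\partial^{\vec\beta'}f(\cdot,\vec x')\in H^m(\T)$ for almost every $\vec x'$ (Fubini), so the univariate lemma can indeed be applied slicewise before integrating the squared inequality over $\vec x'$.
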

\begin{proof}
We use a multi-variate version of the function $\Psi_m$ of the proof of Lemma~\ref{lem:1d_charakpsi}, which we get by tensorizing 
the one-dimensional functions, $\Psi_m(\vec x)= \prod_{i=1}^d \Psi_m(x_i)$. This function is supported on $[0,S]^d$. Furthermore, we have 
$$\Dx^{(m\cdot\vec 1)}\Psi_m (\vec x)=\psi(\vec x),$$
where $\vec 1$ is the $d$-dimensional vector of ones. We also derive multi-dimensional identities like in \eqref{eq:ddm_psi_per}, 
where we tensorize the functions and differentiate in every dimension $m$ times. 
Let the multi-index $\vec j$ be fixed, with $\vec u = \{i\in [d]\mid j_i\geq 0\}$.
We apply the partial integration in the dimensions $i$, where $j_i\geq 0$, i.e. in the dimensions $\vec u$. In these dimensions 
we use \eqref{eq:ddm_psi_per} with $j=j_i$ and $x=x_i$. Therefore we get
\begin{align*}
&|\langle f,\psi_{\vec j,\vec k}^\per\rangle|=\int_{\T^d}\overline{f(\vec x)}\, \psi_{\vec j,\vec k}^\per(\vec x)\d\vec x
=\int_{\T^{|\vec u^c|}}\int_{\T^{|\vec u|}}\overline{f(\vec x)} \,\psi_{\vec j,\vec k}^\per(\vec x)\d\vec x_\vec u\d \vec x_{\vec u^c}\\
= &\int_{\T^{|\vec u^c|}}\int_{\T^{|\vec u|}}\overline{f(\vec x)} \,\prod_{i=1}^{|\vec u|}\psi_{\vec j_{u_i},\vec k_{u_i}}^\per( x_{u_i})\d\vec x_\vec u\d \vec x_{\vec u^c}\\
\leq & 2^{|\vec j_\vec u|_1/2}2^{-|\vec j|_1 m}\int_{\T^{|\vec u^c|}}\sum_{\vec \ell_{\vec u}\in \Z^{|\vec u|}}\int_{I_{\vec \ell,\vec j,\vec k}}|(D^{(\vec 1_{(\vec u)}\,m)}\overline{f(\vec x)} )\,\Psi_m(2^{\vec j_{\vec u}}\vec x_{\vec u}+2^{j_{\vec u}}\vec l_{\vec u}-\vec k_{\vec u})|\d \vec x_\vec u \d \vec x_{\vec u^c},
\end{align*}
where the vector $\vec 1_{(\vec u)}$ is the vector which is $1$ at the indices $\vec u$ and all other entries are zero. 
The intervals for integration are given by the support of $\Psi_m$, i.e. 
$$I_{\vec \ell,\vec j,\vec k}=\supp \Psi_m(2^{\vec j_{\vec u}}\vec x_{\vec u}+2^{j_{\vec u}}\vec l_{\vec u}-\vec k_{\vec u})=[2^{\vec j_{\vec u}}\vec k_{\vec u}-\vec l_{\vec u},2^{-\vec j_{\vec u}}(S+\vec k_{\vec u})-\vec \ell_{\vec u}]\subset \T^{|\vec u|},$$ 
where expressions over multi-indices are always meant component-wise. That means $(2^{-\vec j}\vec k)_i=2^{-{j_i}}k_i$.
We denote the union $\cup_{\vec \ell\in \Z^d}I_{\vec \ell,\vec j,\vec k}$ by $I_{\vec j,\vec k}$.
Note that in this proof we put points that are in multiple $I_{\vec \ell,\vec j,\vec k}$, multiple 
times in $I_{\vec j,\vec k}$. This is the case if $S>2^{j_i}$ for some $i$, hence 
$|I_{\vec j,\vec k}|=S\,2^{-|\vec j_{\vec u}|_1}$, which can be greater than $1$. Like in \eqref{eq:sca_f_psi2} 
we use Cauchy-Schwarz-inequality and get
\begin{align*}
|\langle f ,\psi_{\vec j,\vec k}^\per\rangle|\leq C 2^{-|j_\vec u|_1m}\norm{D^{(\vec 1_{(\vec u)}\,m)}f }_{L_2(I_{\vec j_\vec u,\vec k_\vec u}\otimes\T^{d-|\vec u|})},
\end{align*} 
where the constant depends on $m$ and $|\vec u|$.
Summation over $\vec k\in \I_\vec j$ means to unite $I_{\vec j,\vec k}$ for all $\vec k\in I_\vec j$. 
The intervals $I_{\vec j_,\vec k}$ have length $2^{-|\vec j_\vec u|_1}S^{|\vec u|}$ and they are 
centered at the points $2^{\vec j_\vec u}\vec k_\vec u$ for $\vec k_\vec u\in\I_{\vec j_\vec u}$. Hence, 
the intervals overlap at most $\left\lceil S\right\rceil^{|\vec u|}$-times, i.e. we have
\begin{align*}
 2^{ 2|\vec j_\vec u|_1 m}\left(\sum_{\vec k\in \I_\vec j}|\langle f,\psi_{\vec j,\vec k}^\per\rangle|^2\right)\leq C \norm{D^{(\vec 1_{(\vec u)}\,m)}f}_{L_2(\T^{d})}^2\leq C\norm{f}_{H^m_\mix(\T^d)}^2,
\end{align*}
where the constant depends on $m$ and $d$. This finishes the proof.
\end{proof}
In the previous proof the function $f$ had to have the same smoothness as the order $m$ of the vanishing moments property \eqref{eq:moments}. If we require 
only slightly less smoothness, we get a much better characterization of functions in $V_j$, which uses the sum 
instead of the supremum of the wavelet coefficients if the fractional smoothness parameter $s$ satisfies $s<m$. Again, we prepare the multi-dimensional result by proving the following one-dimensional result first.
\begin{lemma} \label{lem:strong_characterization}
Let $f\in H^s(\T)$, $0<s<m$ and $\psi^\per_{j,k}$ an $1$-periodized wavelet, which is compactly supported, see~\eqref{eq:support}, has vanishing moments of order $m$, see~\eqref{eq:moments}, and forms a Riesz-basis, see~\eqref{eq:Riesz}. Then there exists a fixed constant $C$, which depends on $m$, such that
\begin{equation*}
\left(\sum_{j= -1}^{\infty} 2^{2 |j|_1 s}\sum_{k\in \I_j}|\langle f,\psi_{ j, k}^\per\rangle|^2\right)^{1/2}\leq \left(\delta_m\,\frac{2^s}{2^s-1}+C\,\frac{1}{2^{(m-s)}-1}\right) \norm{f}_{H^s(\T)},
\end{equation*}
where $\delta_m$ is the Riesz constant from \eqref{eq:Riesz} and where we use $|j|_1=j$ if $j\geq 0$ and $0$ otherwise.
\end{lemma}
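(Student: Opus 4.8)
The plan is to reduce the estimate to a near-diagonal \emph{scale-interaction} bound and then sum a geometric series. First I would decompose $f$ into its multiresolution detail blocks: writing $P_r$ for the $L_2(\T)$-orthogonal projection onto $V_r^\per$, set $\eta_{-1}=P_0f=f_\varnothing 1_\T$ and $\eta_r=(P_{r+1}-P_r)f$ for $r\ge 0$, so that $f=\sum_{r\ge -1}\eta_r$ with pairwise orthogonal blocks $\eta_r\in V_{r+1}^\per$. For a fixed level $j$ I then apply the triangle inequality in $\ell_2$ over the translation index $k$,
\[
\Big(\sum_{k\in\I_j}|\langle f,\psi_{j,k}^\per\rangle|^2\Big)^{1/2}\le\sum_{r\ge -1}\Big(\sum_{k\in\I_j}|\langle\eta_r,\psi_{j,k}^\per\rangle|^2\Big)^{1/2}=:\sum_{r\ge -1}b_{j,r},
\]
so that the whole problem reduces to estimating the coefficients $b_{j,r}$ measuring the interaction between the block at scale $r$ and the test wavelets at scale $j$.

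The two regimes are handled by the two different wavelet properties. For the fine and diagonal blocks $r\ge j$ I use only the upper Riesz bound in \eqref{eq:Riesz}: by duality the synthesis bound $\|\sum_k d_k\psi_{j,k}^\per\|_{L_2(\T)}^2\le\delta_m\sum_k|d_k|^2$ is equivalent to the Bessel bound $\sum_k|\langle h,\psi_{j,k}^\per\rangle|^2\le\delta_m\|h\|_{L_2(\T)}^2$, valid for every $h\in L_2(\T)$, which gives $b_{j,r}\le\sqrt{\delta_m}\,\norm{\eta_r}_{L_2(\T)}$. For the coarse blocks $r<j$ I use the $m$ vanishing moments \eqref{eq:moments} as in the proof of Lemma~\ref{lem:1d_charakpsi}: since $\eta_r\in V_{r+1}^\per$ is, on the support of each $\psi_{j,k}^\per$, almost always a single polynomial of degree less than $m$, the inner product vanishes except for the $\O(2^r)$ translates whose support straddles a knot of $V_{r+1}$; estimating those surviving terms through the $m$-fold primitive $\Psi_m$ yields the full decay $b_{j,r}\le C\,2^{-(j-r)m}\norm{\eta_r}_{L_2(\T)}$.

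Inserting both bounds into the weighted sum and abbreviating $c_r:=2^{rs}\norm{\eta_r}_{L_2(\T)}$, I obtain $2^{js}\sum_r b_{j,r}\le\sum_r K_{j-r}\,c_r$ with a kernel depending only on the scale difference, namely $K_t=\sqrt{\delta_m}\,2^{ts}$ for $t\le 0$ and $K_t=C\,2^{-t(m-s)}$ for $t\ge 1$. This is a discrete convolution, so Young's inequality gives
\[
\Big(\sum_{j\ge -1}2^{2js}\sum_{k\in\I_j}|\langle f,\psi_{j,k}^\per\rangle|^2\Big)^{1/2}\le\Big(\sum_{t\in\Z}K_t\Big)\Big(\sum_{r\ge -1}c_r^2\Big)^{1/2},
\]
where $\sum_{t\le 0}K_t=\sqrt{\delta_m}\,\tfrac{2^s}{2^s-1}$ and $\sum_{t\ge 1}K_t=C\,\tfrac{1}{2^{m-s}-1}$ reproduce exactly the two geometric factors in the statement; the second series converges precisely because $s<m$. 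The remaining pivot $\big(\sum_r c_r^2\big)^{1/2}=\big(\sum_r 2^{2rs}\norm{\eta_r}_{L_2(\T)}^2\big)^{1/2}$ is then bounded by $\norm{f}_{H^s(\T)}$ using the dyadic (square-function) description of $H^s(\T)$ collected in the appendix, which controls the detail blocks of the $L_2$-projections by the Sobolev norm.

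I expect the \emph{main obstacle} to be the coarse-scale estimate for $r<j$: one needs the full decay exponent $m$, yet the spline blocks $\eta_r$ possess only $m-1$ genuine derivatives, so a global $m$-fold integration by parts is unavailable once $m-1\le s<m$. The way around this is precisely the local polynomial reproduction above — using that $\psi_{j,k}^\per$ annihilates $\eta_r$ on every interval free of a knot, so that only a sparse, knot-localized family of coefficients survives — rather than any global smoothness bound. Matching the constant $\delta_m$ (rather than $\sqrt{\delta_m}$) in the first term is then a matter of bookkeeping, since passing between $\norm{\eta_r}_{L_2(\T)}$ and the wavelet coefficients at scale $r$ contributes a second factor $\sqrt{\delta_m}$.
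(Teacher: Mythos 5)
Your chosen decomposition trivializes the part of the argument you present as the core, and hides the entire difficulty in the step you treat as free. The multiresolution in this paper is (semi-)orthogonal: $W_j$ is by construction the orthogonal complement of $V_j$ in $V_{j+1}$, so $W_r^\per\perp W_j^\per$ whenever $r\neq j$. Consequently your blocks $\eta_r=(P_{r+1}-P_r)f\in W_r^\per$ satisfy $\langle\eta_r,\psi^\per_{j,k}\rangle=0$ for every $r\neq j$: all off-diagonal coefficients $b_{j,r}$ vanish identically, there is no scale interaction to estimate, no convolution kernel, and the ``main obstacle'' you discuss (the coarse regime $r<j$) concerns quantities that are zero. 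After this observation, what remains of your argument is exactly $\sum_j 2^{2js}\sum_k|\langle f,\psi^\per_{j,k}\rangle|^2\leq\delta_m\sum_j2^{2js}\|\eta_j\|^2_{L_2(\T)}$, i.e.\ the lemma is reduced entirely to your pivot estimate. But that pivot, $\sum_r2^{2rs}\|\eta_r\|^2_{L_2(\T)}\lesssim\|f\|^2_{H^s(\T)}$, is \emph{not} what the appendix provides: the equivalence \eqref{dyadic} concerns the \emph{Fourier} (Littlewood--Paley) dyadic blocks $f_q$, which are trigonometric polynomials, not the MRA detail blocks $(P_{r+1}-P_r)f$. Worse, by the Riesz property \eqref{eq:Riesz} one has $\|\eta_r\|^2_{L_2(\T)}\asymp\sum_k|\langle f,\psi^{\per,*}_{r,k}\rangle|^2$, so your pivot is equivalent (up to Riesz constants) to the univariate, dual-coefficient form of the characterization, i.e.\ to Theorem~\ref{thm:norm-psi*} with $d=1$ --- essentially the statement being proven. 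As written, the proposal is circular, or at best defers all of the content to an unproven step.

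The paper's proof is precisely a proof of (a version of) your pivot: it decomposes $f$ into the Fourier blocks $f_q$ of \eqref{dyadic}, for which the cross-scale interactions are genuinely nonzero, and estimates them in two regimes --- $\ell>0$ by the frame bound coming from \eqref{eq:Riesz}, and $\ell\leq0$ by Lemma~\ref{lem:1d_charakpsi} combined with the Bernstein bound \eqref{eq:norm_fq}, $\|f_q\|_{H^m(\T)}\leq 2^{qm}\|f_q\|_{L_2(\T)}$, which is valid at the \emph{full} order $m$ for every block because trigonometric polynomials are $C^\infty$. This is exactly the point your substitution destroys: detail blocks built from order-$m$ splines have Sobolev smoothness only $m-\tfrac12$, so even in a setting where the cross-scale interactions were nonzero, your knot-localization argument could deliver at best the decay $2^{-(j-r)(m-1/2)}$ (jumps of $D^{m-1}\eta_r$ of size $\sim2^{r(m-1/2)}\|\eta_r\|_{L_2(\T)}$ times a local polynomial-approximation error $\sim2^{-j(m-1/2)}$ per straddled knot), whose weighted kernel is summable only for $s<m-\tfrac12$, not on the full claimed range $s<m$; moreover that argument is spline-specific, while the lemma covers any wavelet satisfying \eqref{eq:support}--\eqref{eq:Riesz}, e.g.\ periodized Daubechies wavelets, whose detail blocks are not piecewise polynomial at all. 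If you want to keep your architecture, replace $\eta_r$ by the Fourier blocks $f_q$: then your fine-regime frame bound, your Young-type summation, and the appendix equivalence all apply verbatim --- and you have reconstructed the paper's proof.
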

\begin{proof}
The first summand for $j=-1$ is $|\langle f, 1_\T \rangle|^2=\norm{f}^2_{L^1(\T)}\leq C\norm{f}_{H^s(\T)}^2$. Now we consider a fixed index $j\geq 0$. 
We will use the equivalent norm given in \eqref{dyadic} in the appendix (univariate version).  This yields in particular for the block $f_{q}$
\begin{equation}\label{eq:norm_fq}
\norm{f_q}_{H^s(\T)}\leq 2^{qs}\norm{f_q}_{L_2(\T)}.
\end{equation}
The decomposition of $f$ in dyadic blocks and triangle inequality yields
\begin{align*}
\left(\sum_{k\in \I_j}|\langle f,\psi_{j,k}^\per\rangle|^2\right)^{1/2}
&\leq \sum_{\ell\in \Z}\left(\sum_{k\in \I_j }|\langle f_{j+\ell},\psi_{j,k}^\per\rangle|^2\right)^{1/2}.
\end{align*}
Let now $\ell\in \Z$ be fixed. We distinguish two cases and we begin with $\ell>0$. Here we have
\begin{align}\label{eq:1d-1}
\sum_{k\in \I_j }|\langle f_{j+\ell},\psi_{j,k}^\per\rangle|^2
\leq  \delta_m\norm{f_{j+\ell}}^2_{L_2(\T)}.
\end{align}
Note that at this point we need the property that $\psi_{j,k}$ form a Riesz basis for fixed $j$, i.e \eqref{eq:Riesz}, and every Riesz basis is a frame with the same constants.
Using the Riesz-basis property~\eqref{eq:Riesz}, summation over the weighted wavelet coefficients yields
\begin{align}\label{eq:proof_end_1}
\sum_{\ell> 0}\left(\sum_{j=0}^\infty 2^{2js}\sum_{k\in \I_j }|\langle f_{j+\ell},\psi_{j,k}^\per\rangle|^2\right)^{1/2}
&\leq\delta_m \sum_{\ell> 0}\left(\sum_{j=0}^\infty 2^{2(j+\ell)s}\,2^{-2\ell s}\norm{f_{j+\ell}}^2_{L_2(\T)}\right)^{1/2}\notag\\
&\leq \delta_m\sum_{\ell> 0} 2^{-\ell s} \norm{f}_{H^s(\T)} = \delta_m\frac{2^{ s}}{2^{ s}-1}\norm{f}_{H^s(\T)} .
\end{align}
For the remaining case $\ell\leq 0$ we use Lemma~\ref{lem:1d_charakpsi} and~\eqref{eq:norm_fq}, i.e we have
\begin{equation}\label{eq:1d-2}
\sum_{k\in \I_j }|\langle f_{j+\ell},\psi_{j,k}^\per\rangle|^2
\leq C 2^{-2jm}\norm{f_{j+\ell}}^2_{H^m(\T)}
\leq C 2^{-2jm}2^{2(j+\ell)m}\norm{f_{j+\ell}}^2_{L_2(\T)}
=C 2^{2\ell m}\norm{f_{j+\ell}}^2_{L_2(\T)},
\end{equation}
where the constant is from Lemma~\ref{lem:1d_charakpsi}  and depends on $m$.
This yields 
\begin{align}\label{eq:proof_end_2}
\sum_{\ell\leq 0}\left(\sum_{j=0}^\infty 2^{2js}\sum_{k\in \I_j }|\langle f_{j+\ell},\psi_{j,k}^\per\rangle|^2\right)^{1/2}
&\lesssim  \sum_{\ell\leq 0}\left(\sum_{j=0}^\infty 2^{2(j+\ell)s}\,2^{-2\ell s} 2^{2\ell m} \norm{f_{j+\ell}}^2_{L_2(\T)}\right)^{1/2}\notag\\
&= \sum_{\ell\leq 0} 2^{\ell(m-s)}\left(\sum_{j=0}^\infty 2^{(j+\ell)s}\norm{f_{j+\ell}}^2_{L_2(\T)}\right)^{1/2}\notag\\
&=\sum_{\ell\leq 0} 2^{\ell(m-s)}\norm{f}_{H^s(\T)}
= \frac{1}{2^{(m-s)}-1}\norm{f}_{H^s(\T)}.
\end{align}
The assertion follows from \eqref{eq:proof_end_1} and \eqref{eq:proof_end_2}.
\end{proof}

By a similar and straight-forward direction-wise analysis as in Theorem \ref{thm:d_charakpsi} we get the following multivariate version from \eqref{eq:1d-1} and \eqref{eq:1d-2},
\begin{equation}\label{multiv}
    \sum_{\vec k\in \I_\vec j}|\langle f_{\bj+{\vec \ell}},\psi_{ \vec j, \vec k}^\per\rangle|^2 
    \lesssim 2^{-2m|{\vec \ell}_-|_1}\|f_{\bj+{\vec \ell}}\|^2_{L_2(\T^d)}\,,
\end{equation}
where we define ${\vec \ell}_-:=((\ell_1)_-,...,(\ell_d)_-)$ with $x_{-}=\min\{0,x\}$.

The following result represents a multivariate version of  Lemma~\ref{lem:strong_characterization}. 

\begin{theorem}\label{thm:strong_characterization_nd}
Let $f\in L_2(\T^d)$, $0<s<m$ and $\psi$ a wavelet, which is compactly supported, see~\eqref{eq:support}, has vanishing moments of order $m$, see~\eqref{eq:moments}, and forms a Riesz-basis, see~\eqref{eq:Riesz}.
Then there 
exists a constant $C$, which depends on $m$, $s$ and $d$, such that
\begin{equation}\label{eq:strong_characterization_nd}
\left(\sum_{\vec j\geq  -\vec 1} 2^{2 |\vec j|_1 s}\sum_{\vec k\in \I_\vec j}|\langle f,\psi_{ \vec j, \vec k}^\per\rangle|^2\right)^{1/2}\leq C \norm{f}_{H_\mix^s(\T^d)},
\end{equation}
and 
\begin{equation}\label{eq:strong_characterization_Besov}
\sup_{\vec j\geq  -\vec 1} 2^{|\vec j|_1 s}\left(\sum_{\vec k\in \I_\vec j}|\langle f,\psi_{ \vec j, \vec k}^\per\rangle|^2\right)^{1/2}\leq C \norm{f}_{\bB^s_{2,\infty}(\T^d)},
\end{equation}
where we use $|\vec j|_1=\sum_{i,j_i\geq 0} j_i$.
\end{theorem}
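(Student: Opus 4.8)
The plan is to run the one-dimensional argument of Lemma~\ref{lem:strong_characterization} coordinate-wise, exactly as Theorem~\ref{thm:d_charakpsi} was obtained from Lemma~\ref{lem:1d_charakpsi}, now using the multivariate block estimate \eqref{multiv} in place of \eqref{eq:1d-1} and \eqref{eq:1d-2}. The point is that \eqref{multiv} already packages both regimes simultaneously: in the coordinates $i$ with $\ell_i>0$, where $\psi_{\vec j,\vec k}^\per$ tests a block of \emph{higher} frequency, it contributes only the Riesz bound, whereas in the coordinates with $\ell_i\le0$, where the $m$ vanishing moments act on a \emph{lower} frequency block, it contributes the gain $2^{-2m|\vec\ell_-|_1}$. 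Throughout I would invoke the equivalent dyadic-block norms from the appendix, representing $\norm{f}_{H^s_\mix(\T^d)}$ by $\big(\sum_{\vec q}2^{2|\vec q|_1 s}\norm{f_{\vec q}}_{L_2(\T^d)}^2\big)^{1/2}$ and $\norm{f}_{\bB^s_{2,\infty}(\T^d)}$ by $\sup_{\vec q}2^{|\vec q|_1 s}\norm{f_{\vec q}}_{L_2(\T^d)}$.

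For \eqref{eq:strong_characterization_nd} I would fix $\vec j\ge-\vec 1$, insert the dyadic decomposition $f=\sum_{\vec\ell}f_{\vec j+\vec\ell}$ into the inner $\ell^2(\vec k)$-norm, and use the triangle inequality to pull the sum over $\vec\ell\in\Z^d$ outside. After inserting $2^{|\vec j|_1 s}$ and forming the $\ell^2(\vec j)$-norm, the generalized Minkowski inequality moves the $\vec\ell$-sum in front of the whole expression, and estimating each block by \eqref{multiv} yields
\[
\sum_{\vec\ell\in\Z^d}2^{-m|\vec\ell_-|_1}\Big(\sum_{\vec j\ge-\vec 1}2^{2|\vec j|_1 s}\norm{f_{\vec j+\vec\ell}}_{L_2(\T^d)}^2\Big)^{1/2}.
\]
Substituting $\vec q=\vec j+\vec\ell$ turns the inner sum into $\big(\sum_{\vec q}2^{2|\vec q-\vec\ell|_1 s}\norm{f_{\vec q}}_{L_2(\T^d)}^2\big)^{1/2}$, which in the interior equals $2^{-(\sum_i\ell_i)s}\norm{f}_{H^s_\mix(\T^d)}$; the whole bound then factors as $\norm{f}_{H^s_\mix(\T^d)}\prod_{i=1}^d\big(\sum_{\ell_i\in\Z}2^{m(\ell_i)_-}2^{-\ell_i s}\big)$, a product of $d$ geometric series that split into the $\ell_i>0$ part $\sum 2^{-\ell_i s}$ and the $\ell_i\le0$ part $\sum 2^{\ell_i(m-s)}$, both convergent precisely because $0<s<m$. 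Their product is the constant $C(m,s,d)$.

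For the Besov bound \eqref{eq:strong_characterization_Besov} the same decomposition and \eqref{multiv} give, for each fixed $\vec j$, the estimate $\big(\sum_{\vec k}|\langle f,\psi_{\vec j,\vec k}^\per\rangle|^2\big)^{1/2}\lesssim\sum_{\vec\ell}2^{-m|\vec\ell_-|_1}\norm{f_{\vec j+\vec\ell}}_{L_2(\T^d)}$. Bounding each block by $\norm{f_{\vec q}}_{L_2(\T^d)}\le2^{-|\vec q|_1 s}\norm{f}_{\bB^s_{2,\infty}(\T^d)}$ with $\vec q=\vec j+\vec\ell$ and multiplying by $2^{|\vec j|_1 s}$ gives $\norm{f}_{\bB^s_{2,\infty}(\T^d)}\sum_{\vec\ell}2^{-m|\vec\ell_-|_1}2^{(|\vec j|_1-|\vec j+\vec\ell|_1)s}$. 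The $\vec\ell$-sum factorizes into the same one-dimensional series as above and is bounded uniformly in $\vec j$, so taking the supremum over $\vec j$ closes the argument.

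The only genuinely delicate point is the bookkeeping near the scaling-function level, where the wavelet indices satisfy $j_i\ge-1$ while the dyadic blocks are indexed by $\vec q\ge\vec 0$ and the mixed index norm $|\vec j|_1=\sum_{i:\,j_i\ge0}j_i$ ignores the coordinates with $j_i=-1$. One has to verify that the shift $\vec q=\vec j+\vec\ell$ under this convention still produces, coordinate by coordinate, exactly the two geometric series above, and that the $j_i=-1$ contributions are absorbed by the $L_1$-type bound \eqref{eq:wavcoefphi} as in the one-dimensional proof. Once this is checked the argument tensorizes cleanly and the per-coordinate constants multiply; all the analytic substance already sits in \eqref{multiv}, and what remains is merely the summation of geometric series, which is where the hypothesis $0<s<m$ is used.
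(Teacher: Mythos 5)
Your proposal is correct and follows essentially the same route as the paper: the paper proves \eqref{eq:strong_characterization_nd} and \eqref{eq:strong_characterization_Besov} precisely by running the argument of Lemma~\ref{lem:strong_characterization} with the multivariate block estimate \eqref{multiv} in place of \eqref{eq:1d-1} and \eqref{eq:1d-2}, which is exactly the dyadic-block decomposition, Minkowski-inequality and geometric-series argument you spell out (including the role of $0<s<m$ and the bookkeeping at the scaling-function level $j_i=-1$). The only difference is cosmetic: for the $H^s_{\mix}$ bound the paper additionally records an alternative proof via the abstract tensor-product result of~\cite{SiUl09}, while for the Besov bound it relies on the same direct argument you give.
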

\begin{proof} The relation in \eqref{eq:strong_characterization_nd} can be shown along the lines of Lemma \ref{lem:strong_characterization} using \eqref{multiv} instead of \eqref{eq:1d-1} and \eqref{eq:1d-2} at the respective place. However, let us additionally give a different proof argument based on an abstract tensor product result. For this end we need the sequence space 
$$b_2^s:=\left\{(a_{j,k})\subset \C \left\vert \left(\sum_{j=-1}^\infty\sum_{k\in \I_j}2^{2js}|a_{j,k}|^2\right)^{1/2}<\infty\right.\right\}.$$
Corollary 3.6.(i) for the case $p=2$ from~\cite{SiUl09} 
gives us a result of the multivariate versions of these one-dimensional sequence spaces.  
It was shown that the multivariate sequence spaces are the tensor products of the one-dimensional sequence spaces. 
In our case we have to consider the sequence spaces of the wavelet coefficients, where $a_{\vec j,\vec k}=\langle f,\psi_{ \vec j, \vec k}^{*,\per}\rangle$.
Theorem 2.1 also from~\cite{SiUl09} shows that the spaces $H^s_\mix(\T^d)$ coincide with the tensor products $\otimes_{i=1}^d H^s(\T)$. 
Our one-dimensional Lemma~\ref{lem:strong_characterization} bounds the operator which maps a function from 
$H^s(\T)$ to $b_2^s$. Hence, the tensor product operator is also bounded between the tensor-product spaces. 

As for \eqref{eq:strong_characterization_Besov} we again need a direct argument since a counterpart of the mentioned tensor product result is not available. The modification is straightforward and again based on \eqref{multiv}. 
\end{proof}
\begin{Remark}
Note that the constant in the previous theorem is the $d$-th power of the constant in Theorem~\ref{lem:strong_characterization}. 
We receive the same constant in an elementary proof which uses multi-dimensional ideas of the proof of Lemma~\ref{lem:strong_characterization}.
\end{Remark}

The version in Theorem~\ref{thm:strong_characterization_nd} is not suitable for our purpose. We want to approximate a function $f\in L_2(\T^d)$ in terms of multi-dimensional tensor products of dilated and translated versions of the wavelet $\psi$, given in~\eqref{eq:psi_per_multi}, i.e.
\begin{equation*}\label{eq:decompf2}
f = \sum_{\vec j\geq -\vec 1}\sum_{\vec k\in \I_\vec j}\langle f,\psi^{\per *}_{\vec j,\vec k} \rangle \psi^\per_{\vec j,\vec k}, 
\end{equation*}
which is the multi-dimensional version of \eqref{eq:decompf}. For that reason we need a characterization with 
the scalar products $\langle f, \psi_{\vec j,\vec k}^{\per *}\rangle$ instead of 
$\langle f, \psi_{\vec j,\vec k}^{\per}\rangle$. 
\begin{theorem}\label{thm:norm-psi*}
With the assumptions like in the previous theorem and letting $\psi^{\per *}_{\vec j,\vec k}$ denote the dual wavelets corresponding to the wavelets $\psi^{\per}_{\vec j,\vec k}$. There 
exists a constant $C$, which depends on $m$, $s$ and $d$, such that
\begin{equation*}
\left(\sum_{\vec j\geq  -\vec 1} 2^{2 |\vec j|_1 s}\sum_{\vec k\in \I_\vec j}|\langle f,\psi_{ \vec j, \vec k}^{\per, *}\rangle|^2\right)^{1/2}\leq C \norm{f}_{H_\mix^s(\T^d)},
\end{equation*}
and
\begin{equation*}
\sup_{\vec j\geq  -\vec 1} 2^{|\vec j|_1 s}\left(\sum_{\vec k\in \I_\vec j}|\langle f,\psi_{ \vec j, \vec k}^{\per,*}\rangle|^2\right)^{1/2}\leq C \norm{f}_{\bB^s_{2,\infty}(\T^d)},
\end{equation*}
where we define the index norm $|\vec j|_1=\sum_{i,j_i\geq 0} j_i$.
\end{theorem}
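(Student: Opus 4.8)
The plan is to reduce everything to the primal estimates of Theorem~\ref{thm:strong_characterization_nd} by exploiting the semi-orthogonality of the wavelet system: since the spaces $W_j^\per$ in \eqref{eq:decomp_per} are mutually orthogonal, both the primal wavelets $\psi_{\vec j,\vec k}^\per$ and their duals $\psi_{\vec j,\vec k}^{\per,*}$ belong to the \emph{same} block $W_{\vec j}^\per$, so only the change of basis within a fixed level $\vec j$ needs to be controlled. First I would fix $\vec j$, set $\vec u=\{i\in[d]\mid j_i\geq 0\}$, and consider the orthogonal projection $P_{\vec j}f$ of $f$ onto $W_{\vec j}^\per$. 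Biorthogonality gives the two expansions
$$P_{\vec j}f=\sum_{\vec k\in\I_{\vec j}}\langle f,\psi_{\vec j,\vec k}^{\per,*}\rangle\,\psi_{\vec j,\vec k}^\per=\sum_{\vec k\in\I_{\vec j}}\langle f,\psi_{\vec j,\vec k}^\per\rangle\,\psi_{\vec j,\vec k}^{\per,*},$$
so that the dual coefficients are exactly the primal-basis coefficients of $P_{\vec j}f$, and vice versa.

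Next I would invoke the Riesz property \eqref{eq:Riesz}. Tensorizing it over the active directions $i\in\vec u$ (the factors with $j_i=-1$ reduce to the orthonormal $1_\T$ and contribute trivially) shows that on $W_{\vec j}^\per$ the primal system has Riesz constants $\gamma_m^{|\vec u|},\delta_m^{|\vec u|}$, while the dual system has the reciprocal constants $\delta_m^{-|\vec u|},\gamma_m^{-|\vec u|}$. Applying the lower primal bound to the first expansion and the upper dual bound to the second yields
$$\gamma_m^{|\vec u|}\sum_{\vec k\in\I_{\vec j}}|\langle f,\psi_{\vec j,\vec k}^{\per,*}\rangle|^2\leq\norm{P_{\vec j}f}_{L_2(\T^d)}^2\leq\gamma_m^{-|\vec u|}\sum_{\vec k\in\I_{\vec j}}|\langle f,\psi_{\vec j,\vec k}^\per\rangle|^2,$$
and hence the pointwise-in-$\vec j$ comparison
$$\sum_{\vec k\in\I_{\vec j}}|\langle f,\psi_{\vec j,\vec k}^{\per,*}\rangle|^2\leq\gamma_m^{-2|\vec u|}\sum_{\vec k\in\I_{\vec j}}|\langle f,\psi_{\vec j,\vec k}^\per\rangle|^2\leq\gamma_m^{-2d}\sum_{\vec k\in\I_{\vec j}}|\langle f,\psi_{\vec j,\vec k}^\per\rangle|^2,$$
using $\gamma_m\leq 1$ (see Table~\ref{tab:Riesz-const}).

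Finally I would weight this inequality by $2^{2|\vec j|_1 s}$ and either sum over all $\vec j\geq-\vec 1$ or take the supremum over $\vec j$. Since the factor $\gamma_m^{-2d}$ does not depend on $\vec j$, it pulls out of both the sum and the supremum, and the resulting right-hand sides are exactly the quantities bounded in \eqref{eq:strong_characterization_nd} and \eqref{eq:strong_characterization_Besov}. This produces both claimed estimates with the constant $C$ of the previous theorem replaced by $\gamma_m^{-d}C$, which still depends only on $m$, $s$ and $d$.

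The only genuinely delicate point is the second step: one must be certain that the dual wavelets really reside in $W_{\vec j}^\per$, so that $P_{\vec j}f$ admits a clean finite dual expansion at a single level, and that the one-dimensional Riesz property \eqref{eq:Riesz} tensorizes with product constants. Both facts hold precisely because the construction in \eqref{eq:decomp_per} is (semi-)orthogonal; for a general biorthogonal system the dual wavelets would spread across levels and this level-by-level reduction would break down. In the orthonormal case one simply has $\psi_{\vec j,\vec k}^{\per,*}=\psi_{\vec j,\vec k}^\per$ and the statement is immediate.
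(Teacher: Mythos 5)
Your proposal is correct and follows essentially the same route as the paper's proof: both identify the two expansions $\sum_{\vec k}\langle f,\psi_{\vec j,\vec k}^{\per,*}\rangle\psi_{\vec j,\vec k}^\per=\sum_{\vec k}\langle f,\psi_{\vec j,\vec k}^{\per}\rangle\psi_{\vec j,\vec k}^{\per,*}$ (the projection onto the level-$\vec j$ block), apply the lower Riesz bound of the primal system and the upper Riesz bound of the dual system to obtain the levelwise comparison $\sum_{\vec k}|\langle f,\psi_{\vec j,\vec k}^{\per,*}\rangle|^2\lesssim\sum_{\vec k}|\langle f,\psi_{\vec j,\vec k}^{\per}\rangle|^2$, and then conclude from Theorem~\ref{thm:strong_characterization_nd}. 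Your version merely makes explicit what the paper leaves implicit, namely the tensorized constants $\gamma_m^{-2|\vec u|}\leq\gamma_m^{-2d}$ and the role of semi-orthogonality in keeping the dual wavelets inside the same level block.
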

\begin{proof}
In this proof we use the property \eqref{eq:Riesz}, i.e. that $\{\psi^\per_{\vec j,\vec k}\}_{\vec k\in \I_\vec j}$ as well as their duals $\{\psi^{\per,*}_{\vec j,\vec k}\}_{\vec k\in \I_\vec j}$ are a Riesz-basis for every fixed $\vec j$. That means
\begin{align*}
\sum_{\vec k\in \I_\vec j}|\langle f,\psi_{ \vec j, \vec k}^{\per *}\rangle|^2\lesssim \left\|\sum_{\vec k\in \I_\vec j}\langle f,\psi_{ \vec j, \vec k}^{\per *}\rangle\psi_{ \vec j, \vec k}^{\per}\right\|^2_{L_2(\T^d)}
=\left\|\sum_{\vec k\in \I_\vec j}\langle f,\psi_{ \vec j, \vec k}^{\per}\rangle\psi_{ \vec j, \vec k}^{\per *}\right\|^2_{L_2(\T^d)}
\lesssim \sum_{\vec k\in \I_\vec j}|\langle f,\psi_{ \vec j, \vec k}^{\per }\rangle|^2.
\end{align*}
Hence, this theorem follows immediately from Theorem~\ref{thm:strong_characterization_nd}. 
\end{proof}
Note that the converse inequality for orthogonal wavelets in case $0<s<m-\tfrac 12$ was shown in 
\cite[Prop. 2.8 ii)]{SiUl09}.

\subsection{Hyperbolic wavelet approximation} \label{sec:HWA}
In the sequel we always deal with multi-dimensional periodic wavelets $\psi_{\vec j,\vec k}^\per$, which are 
compactly supported, see~\eqref{eq:support}, have vanishing moments of order $m$, see~\eqref{eq:moments}, and 
form a Riesz basis, see~\eqref{eq:Riesz}. The last subsection motivates to introduce an approximation operator $P_n$, which truncates the wavelet decomposition, by
\begin{equation}\label{eq:defPn}
P_nf := \sum_{\vec j\in \J_n } \sum_{\vec k\in \I_{\vec j}}\langle f,\psi_{\vec j,\vec k}^{\per *} \rangle \psi_{\vec j,\vec k}^\per,
\end{equation}
in order to approximate a function $f\in L_2(\T^d)$. To do so, we define the index sets
\begin{equation}\label{eq:J_n}
\J_n  = \{\vec j\in \Z^d\mid \vec j\geq -\vec 1, |\vec j|_1\leq n\} .
\end{equation}
The operator $P_n$ is the projection of a function in $L_2(\T^d)$ onto the space 
\begin{equation}\label{eq:Vnper}
V_ n^\per(\T^d)=\sum_{\vec j\in \J_n}\bigotimes_{i=1}^d V_{j_i}^\per.
\end{equation}
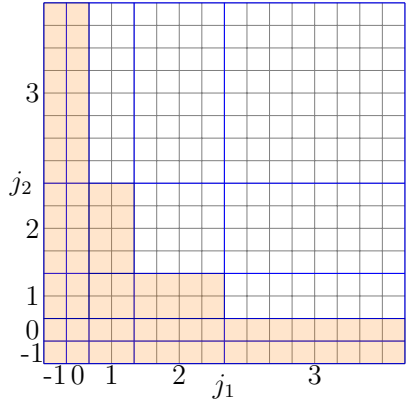
\begin{figure}[ht]\centering
\begin{tikzpicture} [scale=0.3]
\draw[step=1, gray, very thin] (0,0) grid (16,16);
\draw[blue] (0,0) -- (16,0);
\draw[blue] (0,1) -- (16,1);
\draw[blue] (0,2) -- (16,2);
\draw[blue] (0,4) -- (16,4);
\draw[blue] (0,8) -- (16,8);
\draw[blue] (0,16) -- (16,16);
\draw[blue] (0,0) -- (0,16);
\draw[blue] (1,0) -- (1,16);
\draw[blue] (2,0) -- (2,16);
\draw[blue] (4,0) -- (4,16);
\draw[blue] (8,0) -- (8,16);
\draw[blue] (16,0) -- (16,16);
\filldraw[fill=orange,opacity=0.2] (0,0) rectangle (16,2);
\filldraw[fill=orange,opacity=0.2] (0,2) rectangle (2,16);
\filldraw[fill=orange,opacity=0.2] (2,2) rectangle (8,4);
\filldraw[fill=orange,opacity=0.2] (2,4) rectangle (4,8);
\node[align=right,scale=0.9] at (0.5,-0.5) {-1};
\node[align=right,scale=0.9] at (1.5,-0.5) {0};
\node[align=right,scale=0.9] at (3,-0.5) {1};
\node[align=right,scale=0.9] at (6,-0.5) {2};
\node[align=right,scale=0.9] at (12,-0.5) {3};
\node[align=right,scale=0.9] at (-0.5,0.5) {-1};
\node[align=right,scale=0.9] at (-0.5,1.5) {0};
\node[align=right,scale=0.9] at (-0.5,3) {1};
\node[align=right,scale=0.9] at (-0.5,6) {2};
\node[align=right,scale=0.9] at (-0.5,12) {3};
\node[align=right,scale=0.9] at (8,-1) {$j_1$};
\node[align=right,scale=0.9] at (-1,8) {$j_2$};
\end{tikzpicture} \caption{Illustration of the number of 2-dimensional indices $\vec k$ where $|\vec j|\leq 3$.}
\label{fig:indices2d}
\end{figure}
In Figure \ref{fig:indices2d} every small square stands for one multi-index $\vec k$. 
The operator $P_3$ chooses those wavelet functions, for which the corresponding square is colored, i.e. 
all $\vec k$ in the index-set $\I_\vec j$ where $|\vec j|_1\leq3$.  
Using Theorem \ref{thm:norm-psi*}, we estimate the approximation error of this operator by
\begin{corollary}\label{cor:error_bound}
Let $f\in H^s_{\mix}(\T^d)$. For $s<m$ we have for the projection operator $P_n$ 
defined in~\eqref{eq:defPn}
$$\norm{f-P_nf}_{L_2(\T^d)}\lesssim 2^{-sn}\norm{f}_{H^s_\mix(\T^d)}. $$
\end{corollary}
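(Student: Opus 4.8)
The plan is to identify $f-P_nf$ with the high-frequency tail of the wavelet expansion, to control its $L_2$-norm by the squared wavelet coefficients via the Riesz basis property, and then to trade the excess decay for the factor $2^{-sn}$ using Theorem~\ref{thm:norm-psi*}. Concretely, I would start from the expansion $f=\sum_{\vec j\geq-\vec 1}\sum_{\vec k\in\I_\vec j}\langle f,\psi_{\vec j,\vec k}^{\per *}\rangle\psi_{\vec j,\vec k}^\per$ and the definition of $P_n$ over the index set $\J_n$ from~\eqref{eq:J_n}, so that
$$f-P_nf=\sum_{\substack{\vec j\geq-\vec 1\\ |\vec j|_1>n}}\sum_{\vec k\in\I_\vec j}\langle f,\psi_{\vec j,\vec k}^{\per *}\rangle\psi_{\vec j,\vec k}^\per.$$

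Next I would pass from this function to its coefficient sequence. The one-dimensional orthogonal decomposition~\eqref{eq:decomp_per} tensorizes, so that levels corresponding to different $\vec j$ are mutually orthogonal in $L_2(\T^d)$; indeed $\langle\psi_{\vec j,\vec k}^\per,\psi_{\vec j',\vec k'}^\per\rangle=\prod_{i=1}^d\langle\psi_{j_i,k_i}^\per,\psi_{j_i',k_i'}^\per\rangle$ vanishes for $\vec j\neq\vec j'$ by~\eqref{eq:sca_psi_per}. Moreover, for each fixed $\vec j$ the system $\{\psi_{\vec j,\vec k}^\per\}_{\vec k\in\I_\vec j}$ is a Riesz basis of its span with upper constant at most $\delta_m^d$, the $d$-fold tensor power of the constant in~\eqref{eq:Riesz}. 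Combining level-orthogonality with the Riesz upper bound on each level yields
$$\norm{f-P_nf}_{L_2(\T^d)}^2=\sum_{|\vec j|_1>n}\left\|\sum_{\vec k\in\I_\vec j}\langle f,\psi_{\vec j,\vec k}^{\per *}\rangle\psi_{\vec j,\vec k}^\per\right\|_{L_2(\T^d)}^2\leq\delta_m^d\sum_{|\vec j|_1>n}\sum_{\vec k\in\I_\vec j}|\langle f,\psi_{\vec j,\vec k}^{\per *}\rangle|^2.$$

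Finally I would insert the smoothness weight: on the tail $|\vec j|_1>n$, so $1\leq 2^{2s(|\vec j|_1-n)}$, and hence
$$\sum_{|\vec j|_1>n}\sum_{\vec k\in\I_\vec j}|\langle f,\psi_{\vec j,\vec k}^{\per *}\rangle|^2\leq 2^{-2sn}\sum_{\vec j\geq-\vec 1}2^{2|\vec j|_1 s}\sum_{\vec k\in\I_\vec j}|\langle f,\psi_{\vec j,\vec k}^{\per *}\rangle|^2\leq 2^{-2sn}\,C^2\norm{f}_{H^s_\mix(\T^d)}^2,$$
where the last step is precisely the first estimate of Theorem~\ref{thm:norm-psi*}, applicable since $0<s<m$. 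Taking square roots gives $\norm{f-P_nf}_{L_2(\T^d)}\lesssim 2^{-sn}\norm{f}_{H^s_\mix(\T^d)}$, as claimed.

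The one step that deserves care is the middle display: both the level-wise orthogonality and the tensorized Riesz upper bound have to be read off cleanly from~\eqref{eq:decomp_per} and~\eqref{eq:Riesz} (the presence of scaling-function directions $j_i=-1$, where the relevant constant is $1$, is harmless). Everything else reduces to the elementary observation $2^{-2|\vec j|_1 s}\leq 2^{-2sn}$ for $|\vec j|_1>n$ together with a direct appeal to Theorem~\ref{thm:norm-psi*}; in particular no summation over scales is needed at this point, that summation having already been absorbed into the characterization theorem.
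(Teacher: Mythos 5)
Your proposal is correct and follows essentially the same route as the paper's proof: express $f-P_nf$ as the tail $|\vec j|_1>n$ of the wavelet expansion, bound its $L_2$-norm by the sum of squared dual coefficients, and then insert the weight $2^{-2|\vec j|_1 s}2^{2|\vec j|_1 s}$ with $2^{-2|\vec j|_1 s}\leq 2^{-2ns}$ before invoking Theorem~\ref{thm:norm-psi*}. The only difference is that you make explicit (via level-orthogonality from~\eqref{eq:sca_psi_per} and the tensorized Riesz upper bound from~\eqref{eq:Riesz}) the step the paper compresses into its initial $\lesssim$, which is a welcome clarification rather than a deviation.
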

\begin{proof}
Due to the wavelet decomposition of $f$, we have
\begin{align*}
\norm{f-P_nf}^2_{L_2(\T^d)}
&\lesssim\sum_{\stackrel{\vec j\geq -\vec 1}{|\vec j|_1>n}}\sum_{\vec k\in \I_\vec j}|\langle f,\psi_{\vec j,\vec k}^{\per *} \rangle|^2
=\sum_{\stackrel{\vec j\geq -\vec 1}{|\vec j|_1>n}}\sum_{\vec k\in \I_\vec j}2^{-2|\vec j|_1 s }2^{2|\vec j|_1 s }|\langle f,\psi_{\vec j,\vec k}^{\per *} \rangle|^2\\
&\lesssim 2^{-2n s }\norm{f}_{H^s_\mix(\T^d)}^2.
\end{align*}
Taking the square root gives the assertion.
\end{proof}
Note that this result can be compared with \cite[Theorem 3.25]{Bo17} for $m=2$. But we get a better approximation rate, 
since we proved the characterization in Theorem~\ref{thm:norm-psi*}, whereas in \cite{Bo17} only a characterization of type from Theorem~\ref{thm:d_charakpsi} was proven.
Related results also appeared in \cite[Theorem 3.2]{DKT98}, \cite[Proposition 6]{GrOsSc99}, \cite[Theorem 2.11]{SiUl09}, but with less transparent requirements on the wavelets.

We also give a relation between the number of necessary 
parameters (degrees of freedom) and the order of approximation. The content of the Lemma below is essentially known, see \cite[Lem. 3.6]{BuGr04}.

\begin{lemma}\label{lem:N}
Let $N:=\mbox{rank } P_n$ be the number of parameters, that we need to describe the space $V_n^\per(\T^d)$, defined in \eqref{eq:Vnper}, 
which is induced by a wavelet which fulfills \eqref{eq:support}, \eqref{eq:moments} and \eqref{eq:Riesz}. Then 
$$N=2^n\left(\frac{n^{d-1}}{(d-1)!}+\O(n^{d-2})\right)=\O(2^n\, n^{d-1}).$$ 
\end{lemma}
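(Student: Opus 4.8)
The plan is to turn the rank of $P_n$ into a weighted lattice-point count and then read off the asymptotics from a one-variable generating function. First I would note that the summands defining $P_n$ in~\eqref{eq:defPn} run over the mutually orthogonal periodized wavelet spaces of~\eqref{eq:decomp_per}, on each of which $\{\psi_{\vec j,\vec k}^\per\}_{\vec k\in\I_{\vec j}}$ is a Riesz basis by~\eqref{eq:Riesz}; hence these functions are linearly independent and span $V_n^\per(\T^d)$ from~\eqref{eq:Vnper}. Therefore
$$N=\operatorname{rank}P_n=\dim V_n^\per(\T^d)=\sum_{\vec j\in\J_n}|\I_{\vec j}|=\sum_{\vec j\in\J_n}\prod_{i=1}^d|\I_{j_i}|.$$
Since $|\I_{j_i}|=2^{j_i}$ when $j_i\geq 0$ and $|\I_{j_i}|=1$ when $j_i=-1$, each product collapses to $2^{|\vec j|_1}$, leaving the single clean identity $N=\sum_{\vec j\in\J_n}2^{|\vec j|_1}$, with $\J_n$ as in~\eqref{eq:J_n}, as the object to evaluate.

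Second, I would compute this weighted count by a generating function that tracks the total level $|\vec j|_1$. Per coordinate, the two choices $j_i\in\{-1,0\}$ both carry weight $1$ and level $0$, whereas $j_i=\ell\geq 1$ carries weight $2^\ell$ and level $\ell$, so the per-coordinate series is
$$g(z)=2+\sum_{\ell\geq 1}(2z)^\ell=\frac{2(1-z)}{1-2z}.$$
Tensorizing over the $d$ coordinates and summing the level over $0\leq|\vec j|_1\leq n$ (which amounts to multiplying by $(1-z)^{-1}$ and reading off the $z^n$-coefficient) gives
$$N=[z^n]\,\frac{g(z)^d}{1-z}=2^d\,[z^n]\,\frac{(1-z)^{d-1}}{(1-2z)^d}.$$
If one prefers a purely combinatorial route, grouping $\vec j$ by its set of active coordinates $\{i:j_i\geq 1\}$ yields the equivalent binomial expression $N=2^d\sum_{p=0}^d\binom{d}{p}\sum_{m=0}^{n-p}2^m\binom{m+p-1}{p-1}$, which one can sum directly.

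Finally, I would extract the asymptotics from the unique singularity of this rational function, the order-$d$ pole at $z=\tfrac12$. Its most singular part is governed by $[z^n](1-2z)^{-d}=\binom{n+d-1}{d-1}2^n=2^n\big(\tfrac{n^{d-1}}{(d-1)!}+\O(n^{d-2})\big)$; expanding the analytic numerator $(1-z)^{d-1}$ about $z=\tfrac12$ fixes the leading constant and produces only poles of order at most $d-1$, each of which feeds into the $\O(2^n n^{d-2})$ remainder. Collecting these contributions gives $N=2^n\big(\tfrac{n^{d-1}}{(d-1)!}+\O(n^{d-2})\big)=\O(2^n n^{d-1})$. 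The main obstacle here is bookkeeping rather than conceptual: one must correctly merge the degenerate level-$0$ layer (the coincidence of the $j_i=-1$ and $j_i=0$ choices, which together with the normalization of $\J_n$ determines the leading coefficient) and then bound all subdominant poles uniformly to certify the $\O(n^{d-2})$ error term. This is precisely the elementary estimate carried out in~\cite[Lem.\ 3.6]{BuGr04}.
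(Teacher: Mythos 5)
Your reduction of $\operatorname{rank}P_n$ to the weighted count $N=\sum_{\vec j\in\J_n}2^{|\vec j|_1}$ is exactly the paper's starting point, but from there you take a genuinely different route. The paper groups indices by the set of coordinates with $j_i\geq 0$, counts compositions to reduce each group to the one-dimensional sum $\sum_{i=0}^{n}2^{i}\binom{i+d-1}{d-1}$, and then quotes the asymptotics of that sum (in the spirit of \cite[Lem.\ 3.6]{BuGr04}); you instead package the entire count into the rational generating function $N=2^d\,[z^n]\,(1-z)^{d-1}(1-2z)^{-d}$ and read off the growth from the order-$d$ pole at $z=\tfrac12$. Both are valid, and your formulas check out (e.g.\ they reproduce $N=2^{n+1}$ for $d=1$ and $N=2^{n+1}(n+2)$ for $d=2$). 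What your route buys is self-containedness and an exact closed form via partial fractions, with the $\O(2^n n^{d-2})$ remainder coming uniformly from the lower-order poles; what the paper's route buys is brevity, at the price of delegating the key one-dimensional estimate to a cited lemma.

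One concrete caveat: you assert that expanding $(1-z)^{d-1}$ about $z=\tfrac12$ ``fixes the leading constant'' without computing it, and the constant you then claim is not the one your own formula produces. Since $(1-z)^{d-1}$ equals $2^{-(d-1)}$ at $z=\tfrac12$, the most singular part contributes $2^d\cdot 2^{-(d-1)}\,[z^n](1-2z)^{-d}=2^{n+1}\binom{n+d-1}{d-1}$, so the true leading term is $2^{n+1}\,n^{d-1}/(d-1)!$ --- twice the constant stated in the lemma. The sanity checks above confirm this: $N=2^{n+1}$ at $d=1$ is not $2^n(1+\O(1/n))$. This factor-of-two slip is shared by the paper's own proof, whose intermediate claim $\sum_{i=0}^{n}2^{i}\binom{i+d-1}{d-1}=2^n\bigl(n^{d-1}/(d-1)!+\O(n^{d-2})\bigr)$ already fails at $d=1$, where the sum equals $2^{n+1}-1$. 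None of this affects the conclusion $N=\O(2^n n^{d-1})$, which is all that is used later (only $N\asymp 2^n n^{d-1}$ and $\log N\asymp n$ enter Corollary~\ref{cor:l2error_Pn} and the sampling bounds), but to make your write-up airtight you should either finish the constant extraction and state $N=2^{n+1}\bigl(n^{d-1}/(d-1)!\bigr)(1+\O(1/n))$, or claim only the $\O$-bound.
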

\begin{proof}
The number of parameters is
\begin{equation}\label{eq:Nsum}
N= \sum_{\vec j\in \J_n}|\I_\vec j|=\sum_{\vec j\in \J_n}2^{|\vec j|_1} = \sum_{\vec u\in\mathcal P([d]) } \sum_{\stackrel{\vec j_{\vec u}\geq \vec 0}{|\vec j_{\vec u}|_1\leq n} }2^{|\vec j_{\vec u}|_1},
\end{equation}
where $\vec u$ always denotes the index set $\vec u = \{i\in [d]\mid \vec j_i\geq 0\}$.
We consider each summand seperately. Therefore we consider the case where $\vec j\geq \vec 0$, 
\begin{align*}
\sum_{\stackrel{\vec j\geq \vec 0}{|\vec j|_1\leq n}}2^{|\vec j|_1}
&=\sum_{i=d}^{d+n} 2^{i-d}\sum_{|\vec j|_1=i}1
=\sum_{i=d}^{d+n} 2^{i-d}\binom{i-1}{d-1}
=\sum_{i=0}^{n} 2^{i}\binom{i+d-1}{d-1},
\end{align*}
since there are $\binom{i-1}{d-1}$ partitions of $i$ into non-zero natural numbers. For this sum holds
$$\sum_{i=0}^{n} 2^{i}\binom{i+d-1}{d-1}=2^n\left(\frac{n^{d-1}}{(d-1)!}+\O(n^{d-2})\right).$$
Summing over all $\vec u\in \mathcal P([d])$ gives us
$$N = \sum_{k=0}^d \binom{d}{k}2^n\left(\frac{n^{k-1}}{(k-1)!}+\O(n^{k-2})\right)
=\O(2^n\, n^{d-1}).\qedhere$$
\end{proof}

\begin{corollary}\label{cor:l2error_Pn}
Let $f\in H^s_{\mix}(\T^d)$. For $0<s<m$ we have for the projection operator $P_n$ 
defined in~\eqref{eq:defPn} with $N:=\mbox{rank } P_n$. Then
$$\norm{f-P_nf}_{L_2(\T^d)}\lesssim N^{-s}(\log N)^{s(d-1)}\norm{f}_{H^s_\mix(\T^d)}.$$
\end{corollary}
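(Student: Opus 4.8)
The plan is to convert the dyadic-scale error bound of Corollary~\ref{cor:error_bound} into a bound in the number of degrees of freedom by feeding in the parameter count of Lemma~\ref{lem:N}. First I would take as the starting point
$$\norm{f-P_nf}_{L_2(\T^d)}\lesssim 2^{-sn}\norm{f}_{H^s_\mix(\T^d)},$$
valid for $0<s<m$ by Corollary~\ref{cor:error_bound}. Since $N=\mbox{rank }P_n$ is a strictly increasing function of $n$, the whole task reduces to estimating the scalar quantity $2^{-sn}$ from above by a constant multiple of $N^{-s}(\log N)^{s(d-1)}$, with the constant allowed to depend on $s$ and $d$.

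Next I would extract from Lemma~\ref{lem:N} the asymptotic identity $N=2^n\big(n^{d-1}/(d-1)!+\O(n^{d-2})\big)$, which for all sufficiently large $n$ furnishes constants $c_1,c_2>0$ with $c_1\,2^n n^{d-1}\le N\le c_2\,2^n n^{d-1}$. The upper inequality rearranges to $2^n\ge N/(c_2\,n^{d-1})$, hence $2^{-n}\le c_2\,n^{d-1}/N$, and raising to the $s$-th power gives
$$2^{-sn}\lesssim \frac{n^{s(d-1)}}{N^s}.$$
It then only remains to trade the polynomial factor $n^{s(d-1)}$ for a power of $\log N$. For this I would use the crude consequence of the lower inequality $N\ge c_1\,2^n n^{d-1}\ge c_1\,2^n$ (as $n^{d-1}\ge 1$), which upon taking base-$2$ logarithms yields $n\le \log_2 N-\log_2 c_1\lesssim \log N$, so that $n^{s(d-1)}\lesssim(\log N)^{s(d-1)}$. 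Substituting this back gives $2^{-sn}\lesssim N^{-s}(\log N)^{s(d-1)}$, and inserting into the Corollary~\ref{cor:error_bound} bound produces the claimed estimate.

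The argument is essentially asymptotic bookkeeping, so the one place demanding care — and the main source of a possible sign error — is keeping the two comparisons pointed in the correct directions: it is the \emph{upper} bound on $N$ that controls $2^{-n}$ from above (by bounding $2^n$ from below), whereas the \emph{lower} bound on $N$ is what bounds $n$ by $\log N$, so both halves of the two-sided estimate are genuinely used. One should also note that the comparison in Lemma~\ref{lem:N} is only asserted for large $n$; the finitely many small values of $n$ contribute only a bounded correction and are harmlessly absorbed into the implicit constant, since there $P_n$ has bounded rank.
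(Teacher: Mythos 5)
Your proposal is correct and follows exactly the paper's own route: the paper proves this corollary by combining Corollary~\ref{cor:error_bound} with Lemma~\ref{lem:N}, which is precisely your argument, with the asymptotic bookkeeping (using the upper bound $N\lesssim 2^n n^{d-1}$ to control $2^{-sn}$ and the lower bound $N\gtrsim 2^n$ to get $n\lesssim\log N$) spelled out explicitly rather than left implicit.
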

\begin{proof}
This follows from Corollary~\ref{cor:error_bound} together with Lemma~\ref{lem:N}.
\end{proof}
Note that the previous corollary deals with the case $s<m$, i.e. the smoothness $s$ is smaller than the order $m$ of vanishing moments of the wavelet. For the case $s=m$ we can only prove the following worse bound, which is based on the estimate in Theorem~\ref{thm:d_charakpsi}. We do not know whether this bound is optimal or can be improved. 
\begin{corollary}\label{cor:error_m=s}
Let $f\in H^m_\mix(\T^d)$ and $P_n$ being the approximation operator defined in~\eqref{eq:defPn}. 
Then
$$\norm{f-P_nf}_{L_2(\T^d)}\lesssim 2^{-mn}\, n^{(d-1)/2} \norm{f}_{H^m_\mix(\T^d)} \lesssim N^{-m}(\log N)^{(m+\frac 12)(d-1)}\norm{f}_{H^m_\mix(\T^d)}.$$
\end{corollary}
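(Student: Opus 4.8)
The plan is to follow the template of the proof of Corollary~\ref{cor:error_bound}, but to replace the strong (summed) characterization, which is unavailable at the endpoint $s=m$, by the supremum characterization of Theorem~\ref{thm:d_charakpsi}. First I would recall from the proof of Corollary~\ref{cor:error_bound} that the Riesz-basis property~\eqref{eq:Riesz} yields
$$\norm{f-P_nf}^2_{L_2(\T^d)} \lesssim \sum_{\stackrel{\vec j\geq -\vec 1}{|\vec j|_1>n}}\sum_{\vec k\in \I_\vec j}|\langle f,\psi_{\vec j,\vec k}^{\per *}\rangle|^2,$$
so that everything reduces to estimating the tail of the dual wavelet coefficients.

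Next I would pass from the dual coefficients to the primal ones. The levelwise biorthogonality argument used in the proof of Theorem~\ref{thm:norm-psi*} does not use $s<m$ anywhere: for each fixed $\vec j$ the Riesz-basis property of both $\{\psi^\per_{\vec j,\vec k}\}_{\vec k}$ and its dual gives $\sum_{\vec k\in\I_\vec j}|\langle f,\psi_{\vec j,\vec k}^{\per *}\rangle|^2 \lesssim \sum_{\vec k\in\I_\vec j}|\langle f,\psi_{\vec j,\vec k}^{\per}\rangle|^2$. Feeding this into Theorem~\ref{thm:d_charakpsi}, squared and rearranged, produces the per-level bound
$$\sum_{\vec k\in \I_\vec j}|\langle f,\psi_{\vec j,\vec k}^{\per *}\rangle|^2 \lesssim 2^{-2|\vec j|_1 m}\norm{f}^2_{H^m_\mix(\T^d)}.$$
The crucial difference to the subcritical case is that Theorem~\ref{thm:d_charakpsi} controls only the supremum over $\vec j$, hence one level at a time, whereas the summed characterization of Theorem~\ref{thm:strong_characterization_nd} would control the whole series at once; this is exactly where the extra logarithmic factor will come from.

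It then remains to sum the per-level bound over the tail $\{|\vec j|_1>n\}$. Grouping the indices by the value $r=|\vec j|_1$ and using that the number of admissible $\vec j\geq-\vec 1$ with $|\vec j|_1=r$ grows only polynomially, namely like $\O(r^{d-1})$ (the same counting underlying Lemma~\ref{lem:N}), I would obtain
$$\sum_{\stackrel{\vec j\geq -\vec 1}{|\vec j|_1>n}}2^{-2|\vec j|_1 m} \lesssim \sum_{r>n} r^{d-1}2^{-2rm} \lesssim n^{d-1}2^{-2nm},$$
the last step being the standard estimate that a polynomial-times-geometric tail is dominated, up to a constant, by its leading term. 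Taking square roots gives the first claimed inequality $\norm{f-P_nf}_{L_2(\T^d)}\lesssim 2^{-mn}n^{(d-1)/2}\norm{f}_{H^m_\mix(\T^d)}$.

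Finally, to reach the $N$-form I would re-express $n$ and $2^n$ through $N$ using Lemma~\ref{lem:N}, which gives $2^n\asymp N/n^{d-1}$ and $n\asymp\log N$. Then $2^{-mn}=N^{-m}n^{m(d-1)}$, hence $2^{-mn}n^{(d-1)/2}=N^{-m}n^{(m+1/2)(d-1)}\lesssim N^{-m}(\log N)^{(m+1/2)(d-1)}$, as asserted. The only genuinely delicate point is the passage in the second paragraph: one must notice that the biorthogonality transfer between dual and primal coefficients is purely levelwise and therefore survives at the endpoint $s=m$, so that Theorem~\ref{thm:d_charakpsi} can still be applied even though the strong characterization fails there; the remaining summation is elementary.
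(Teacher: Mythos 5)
Your proposal is correct and follows essentially the same route as the paper's own proof: reduce to the tail of the dual coefficients via the Riesz-basis property, bound each level by $2^{-2|\vec j|_1 m}\norm{f}^2_{H^m_\mix(\T^d)}$ using the endpoint supremum characterization (with the levelwise dual-to-primal transfer, which you rightly note is $s$-independent), sum the geometric-times-polynomial tail to get $2^{-2nm}n^{d-1}$, and convert to $N$ via Lemma~\ref{lem:N}. The only cosmetic difference is that you count the indices with $|\vec j|_1=r$ directly rather than citing \cite[Lemma 3.7]{BuGr04}, and you make explicit the biorthogonality step that the paper leaves implicit.
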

\begin{proof}
Like in Corollary~\ref{cor:error_bound} we have
\begin{align*}
\norm{f-P_nf}^2_{L_2(\T^d)}
&\lesssim \sum_{\stackrel{\vec j\geq -\vec 1}{|\vec j|_1>n}}\sum_{\vec k\in \I_\vec j}2^{-2|\vec j|_1 m }2^{2|\vec j|_1 m }|\langle f,\psi_{\vec j,\vec k}^{\per *} \rangle|^2\\
&\lesssim \norm{f}_{H^m_{\mix}(\T^d)}^2 \sum_{\stackrel{\vec j\geq -\vec 1}{|\vec j|_1>n}}2^{-2|\vec j|_1m}.
\end{align*}
In contrast to Corollary~\ref{cor:error_bound} we have to sum over the indices $\vec j$ instead of taking the supremum. 
By first considering the cases where $\vec j\geq \vec 0$, we have by \cite[Lemma 3.7]{BuGr04}, that 
$$\sum_{\stackrel{\vec j\geq \vec 0}{|\vec j|_1>n}}2^{-2|\vec j|_1m}\leq 2^{-2nm}\left(\tfrac{n^{d-1}}{(d-1)!}+\O(n^{d-2})\right).$$
Taking the scaling functions into account, which are constant, i.e. $\psi_{-1,0}=1$ we have
\begin{align*}
\sum_{\stackrel{\vec j\geq -\vec 1}{|\vec j|_1>n}}2^{-2|\vec j|_1m} 
& = \sum_{\vec u\in [d]} \sum_{\stackrel{\vec j_\vec u\geq \vec 0}{|\vec j_\vec u|_1>n}}2^{-2|\vec j|_1m}\\
&\lesssim 2^{-2nm}\sum_{\ell=0}^d\left(\tfrac{n^{\ell-1}}{(\ell-1)!}+\O(n^{\ell-1})\right) = 2^{-2nm}\left(\tfrac{n^{d-1}}{(d-1)!}+\O(n^{d-1})\right).
\end{align*}
The estimation regarding the number $N$ of parameters follows analogously as in Lemma~\ref{lem:N}.
\end{proof}
\begin{remark}\label{rem_besov_L2} With literally the same argument we obtain an analogous $L_2$-bound also for $f\in \bB^s_{2,\infty}(\T)$ if $s<m$. This is a direct consequence of Theorems \ref{thm:strong_characterization_nd}, \ref{thm:norm-psi*}.  
\end{remark}

The characterizations of our wavelet spaces also allow a bound on the $L_{\infty}$-error.
\begin{theorem}\label{thm:L_infty}
For $1/2<s<m$ we have for the projection operator $P_n$ 
defined in~\eqref{eq:defPn}
\begin{align*}
\norm{f-P_nf}_{L_\infty(\T^d)} &\lesssim 2^{-n(s-1/2)}n^{(d-1)/2} \norm{f}_{H^s_{\mix}(\T^d)},\\
\norm{f-P_nf}_{L_\infty(\T^d)} &\lesssim 2^{-n(s-1/2)} n^{d-1}\norm{f}_{\bB_{2,\infty}^s(\T^d)},
\end{align*}
whereas for $s=m$ we have
\begin{equation*}
\norm{f-P_nf}_{L_\infty(\T^d)} \lesssim 2^{-n(m-1/2)} n^{d-1}\norm{f}_{H^m_{\mix}(\T^d)}.
\end{equation*}
\end{theorem}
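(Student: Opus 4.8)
The plan is to control $\norm{f-P_nf}_{L_\infty(\T^d)}$ level by level. Using the wavelet decomposition I write the error as the tail
$$f-P_nf=\sum_{\substack{\vec j\geq -\vec 1\\ |\vec j|_1>n}}g_{\vec j},\qquad g_{\vec j}:=\sum_{\vec k\in \I_{\vec j}}\langle f,\psi_{\vec j,\vec k}^{\per *}\rangle\,\psi_{\vec j,\vec k}^\per,$$
so that the triangle inequality gives $\norm{f-P_nf}_{L_\infty(\T^d)}\leq \sum_{|\vec j|_1>n}\norm{g_{\vec j}}_{L_\infty(\T^d)}$. The key step is a Bernstein-type bound for one block: by the compact support \eqref{eq:support}, at any fixed $\vec x$ the number of nonzero $\psi_{\vec j,\vec k}^\per(\vec x)$ is bounded by $\lceil S\rceil^d$ uniformly in $\vec j$, and each factor satisfies $\norm{\psi_{\vec j,\vec k}^\per}_{L_\infty(\T^d)}\lesssim 2^{|\vec j|_1/2}$ (the coarse directions $j_i=-1$ contribute the constant $1_\T$ and no dyadic factor). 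Combining these yields the uniform estimate
$$\norm{g_{\vec j}}_{L_\infty(\T^d)}\lesssim 2^{|\vec j|_1/2}\max_{\vec k\in \I_{\vec j}}|\langle f,\psi_{\vec j,\vec k}^{\per *}\rangle|\lesssim 2^{|\vec j|_1/2}\Big(\sum_{\vec k\in \I_{\vec j}}|\langle f,\psi_{\vec j,\vec k}^{\per *}\rangle|^2\Big)^{1/2}.$$

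For the Besov bound I insert the $\bB^s_{2,\infty}(\T^d)$-characterization of Theorem~\ref{thm:norm-psi*}, which bounds the level $\ell^2$-norm by $C\,2^{-|\vec j|_1 s}\norm{f}_{\bB^s_{2,\infty}(\T^d)}$, giving $\norm{g_{\vec j}}_{L_\infty(\T^d)}\lesssim 2^{-|\vec j|_1(s-1/2)}\norm{f}_{\bB^s_{2,\infty}(\T^d)}$. Since the number of $\vec j\geq -\vec 1$ with $|\vec j|_1=\ell$ is $\O(\ell^{d-1})$, the remaining tail $\sum_{\ell>n}\ell^{d-1}2^{-\ell(s-1/2)}$ is, for $s>1/2$, dominated by its first term and equals $\O(n^{d-1}2^{-n(s-1/2)})$, which is the second asserted bound.

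For the sharper Sobolev bound I keep the full $\ell^2$-norms $a_{\vec j}:=(\sum_{\vec k}|\langle f,\psi_{\vec j,\vec k}^{\per *}\rangle|^2)^{1/2}$ and apply Cauchy--Schwarz to $\sum_{|\vec j|_1>n}2^{|\vec j|_1/2}a_{\vec j}$, splitting the summand as $(2^{|\vec j|_1 s}a_{\vec j})\cdot 2^{-|\vec j|_1(s-1/2)}$. The first factor is square-summable with $\sum_{\vec j}2^{2|\vec j|_1 s}a_{\vec j}^2\lesssim \norm{f}_{H^s_\mix(\T^d)}^2$ by the $H^s_\mix$-part of Theorem~\ref{thm:norm-psi*}, while the second contributes $(\sum_{|\vec j|_1>n}2^{-2|\vec j|_1(s-1/2)})^{1/2}=\O(n^{(d-1)/2}2^{-n(s-1/2)})$; multiplying gives the first bound, the gain of $n^{(d-1)/2}$ over the Besov estimate being exactly what Cauchy--Schwarz against the summable Sobolev weight buys. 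Finally, the case $s=m$ is excluded from Theorem~\ref{thm:norm-psi*}, so I instead use the sup-type estimate of Theorem~\ref{thm:d_charakpsi}; converting its $\psi$-coefficients to dual $\psi^{*}$-coefficients by the level-wise Riesz equivalence \eqref{eq:Riesz} (exactly as in the proof of Theorem~\ref{thm:norm-psi*}) makes it play the role of the Besov characterization with $s$ replaced by $m$, and repeating the Besov argument verbatim yields $\O(n^{d-1}2^{-n(m-1/2)})$.

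I expect the only real obstacle to be the block estimate: turning the compact-support overlap structure into the clean $L_\infty$-to-$\ell^2$ inequality with a constant uniform in $\vec j$, in particular handling the coarse directions $j_i=-1$ where periodization of $\psi$ across scales with $2^{j_i}\leq \supp\psi$ must be absorbed into the constant. Once that uniform block bound is in place, the remainder is a geometric tail estimate together with a single application of Cauchy--Schwarz, and all three bounds follow by the same scheme with the appropriate characterization plugged in.
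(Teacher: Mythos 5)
Your proof is correct and follows essentially the same route as the paper: a blockwise $L_\infty$-to-$\ell_2$ estimate producing the factor $2^{|\vec j|_1/2}$, then Cauchy--Schwarz in $\vec j$ against the $H^s_\mix$-characterization of Theorem~\ref{thm:norm-psi*} for the first bound, direct insertion of the Besov characterization together with the tail count $\sum_{\ell>n}\ell^{d-1}2^{-\ell(s-1/2)}=\O(n^{d-1}2^{-n(s-1/2)})$ for the second, and the weak characterization of Theorem~\ref{thm:d_charakpsi} (transferred to dual coefficients via the level-wise Riesz property) for $s=m$. The only cosmetic differences are that you use an $\ell_\infty$--$\ell_1$ H\"older estimate in $\vec k$ where the paper uses Cauchy--Schwarz in $\vec k$, and that you make explicit the dual-coefficient conversion in the $s=m$ case which the paper leaves implicit.
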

\begin{proof}
Using triangle inequality we obtain in case $1/2<s<m$ 
\begin{align}
\nonumber\norm{f-P_nf}_{L_\infty(\T^d)} 
&= \sup_{\vec x\in \T^d} \left|\sum_{|\vec j|_1>n}\sum_{\vec k\in \I_{\vec j}} \langle f, \psi_{\vec j,\vec k}^{*,\per}\rangle \psi_{\vec j,\vec k}^\per(\vec x)\right|\\
&\leq \sup_{\vec x\in \T^d} \left|\sum_{|\vec j|_1>n}  \left\|\left(\langle f, \psi_{\vec j,\vec k}^{*,\per}\rangle \psi_{\vec j,\vec k}^\per(\vec x)\right)_{\vec k\in \I_{\vec j}}\right\|_{\ell_1} \right|\,.\label{eq001}
\end{align}
Applying Cauchy-Schwarz-inequality yields
\begin{align*}
\eqref{eq001}&\leq \sup_{\vec x\in \T^d} \left|\sum_{|\vec j|_1>n}  \norm{(\langle f, \psi_{\vec j,\vec k}^{*,\per}\rangle)_{\vec k\in \I_{\vec j}}}_{\ell_2} \norm{( \psi_{\vec j,\vec k}^{\per}(\vec x))_{\vec k\in \I_{\vec j}}}_{\ell_2}\right|\\
&\leq \left|\sum_{|\vec j|_1>n} 2^{-|\vec j|_1s}2^{|\vec j|_1s}  \left(\sum_{\vec k\in \I_{\vec j}}|\langle f, \psi_{\vec j,\vec k}^{*,\per}\rangle|^2\right)^{1/2}\left(\sup_{\vec x\in \T^d} \sum_{\vec k\in \I_{\vec j }}|\psi_{\vec j,\vec k}(\vec x)|^2\right)^{1/2}\right|\,.
\end{align*}
Incorporating $|\psi_{\vec j,\vec k}(\vec x)|\lesssim 2^{j/2}$ implies 
\begin{align*}
\eqref{eq001}& \lesssim \left|\sum_{|\vec j|_1>n} 2^{-|\vec j|_1(s-1/2)}2^{|\vec j|_1s}  \left(\sum_{\vec k\in \I_{\vec j}}|\langle f, \psi_{\vec j,\vec k}^{*,\per}\rangle|^2\right)^{1/2} 
\right|\\
\intertext{and finally, with Hölder's inequality and Theorem~\ref{thm:norm-psi*},}
&\leq \left(\sum_{|\vec j|_1>n}  2^{-2|\vec j|_1(s-1/2)} \right)^{1/2}\norm{f}_{H^s_{\mix}(\T^d)}
\leq 2^{-n(s-1/2)}n^{(d-1)/2} \norm{f}_{H^s_{\mix}(\T^d)}\,.
\end{align*}
Note, that the last estimate boils down to estimate the sum, which has been already done in Corollary~\ref{cor:error_m=s}.

In the remaining case where $s = m$, we only have the weak characterization in Theorem~\ref{thm:d_charakpsi}. This gives a slightly worse bound for the $L_\infty$-error. Like in the previous estimates we have
 \begin{align*}
\norm{f-P_nf}_{L_\infty(\T^d)} 
&\lesssim \left|\sum_{|\vec j|_1>n} 2^{-|\vec j|_1(s-1/2)}2^{|\vec j|_1s}  \left(\sum_{\vec k\in \I_{\vec j}}|\langle f, \psi_{\vec j,\vec k}^{*,\per}\rangle|^2\right)^{1/2} 
\right|\\
\intertext{we extract the supremum in every summand,}
&\leq\left(\sum_{|\vec j|_1>n} 2^{-|\vec j|_1(s-1/2)}\right) \, \left( \sup_{|\vec j|>n}2^{|\vec j|_1s}  \left(\sum_{\vec k\in \I_{\vec j}}|\langle f, \psi_{\vec j,\vec k}^{*,\per}\rangle|^2\right)^{1/2} \right) \\
&\lesssim  2^{-n(s-1/2)} n^{d-1}\norm{f}_{H^s_{\mix}(\T^d)},
\end{align*}
where we bounded the last sum again by using \cite[Lemma 3.7]{BuGr04}. The estimation for the space $\bB_{2,\infty}^s(\T^d)$ follows similarly.
\end{proof}
Note that, using Lemma~\ref{lem:N} this Theorem can also be written in terms of the number of degrees of freedom $N$, which gives for $s<m$
\begin{align*}
\norm{f-P_nf}_{L_\infty(\T^d)} 
&\lesssim  N^{-s+1/2}\left(\log N\right)^{s(d-1)}\|f\|_{H^s_{\mix}(\T^d)},\\
\intertext{and for $s=m$}
\norm{f-P_nf}_{L_\infty(\T^d)} 
&\lesssim  N^{-m+1/2}\left(\log N\right)^{(m+1/2)(d-1)}\|f\|_{H^m_{\mix}(\T^d)}.\\
\end{align*}

\subsection{Tools from probability theory}\label{sec:prob}
In this subsection we collect some basic tools from probability theory, which we will apply later to our concrete settings.
Concentration inequalities describe how much a random variable spreads around the expectation value. 
One basic result about the spectral norm of sums of complex rank-$1$-matrices from~\cite[Theorem 1.1]{Tr12} is the following.
\begin{theorem}[Matrix Chernoff]\label{thm:matrix_chernoff}
Consider a finite sequence $\vec A_i\in \C^{N\times N}$ of independent, random, self-adjoint, positive definite matrices, 
where the eigenvalues satisfy $\mu_{\max}(\vec A_i)\leq R$ almost surely. Define 
$$\mu := \mu_{\min}\left(\sum_i\E(\vec A_i)\right) \quad \text{ and }\quad \tilde{\mu} := \mu_{\max}\left(\sum_i\E(\vec A_i)\right),$$ 
then 
\begin{align*}
\P\left(\mu_{\min}\left(\sum_i\vec A_i\right)\leq (1-\delta)\mu\right)&\leq N\left(\frac{\e^{-\delta}}{(1-\delta)^{1-\delta}}\right)^{\mu/R},\\
\P\left(\mu_{\max}\left(\sum_i\vec A_i\right)\geq (1+\delta)\tilde\mu\right)&\leq N\left(\frac{\e^{\delta}}{(1+\delta)^{1+\delta}}\right)^{\tilde\mu/R}.
\end{align*}
\end{theorem}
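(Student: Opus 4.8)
The plan is to follow the matrix Laplace transform method, since the statement is the standard matrix Chernoff bound of Ahlswede--Winter type in the sharpened form due to Tropp. I focus on the upper tail for $\mu_{\max}$; the lower tail for $\mu_{\min}$ is entirely analogous after replacing $\theta>0$ by $\theta<0$ and reversing the relevant inequalities. First I would set $Y=\sum_i \vec A_i$ and invoke the matrix Markov inequality: for any $\theta>0$, since $t\mapsto e^{\theta t}$ is increasing and the event $\mu_{\max}(Y)\ge(1+\delta)\tilde\mu$ forces $\tr\exp(\theta Y)\ge e^{\theta(1+\delta)\tilde\mu}$, one obtains
\[
\P\big(\mu_{\max}(Y)\ge(1+\delta)\tilde\mu\big)\le e^{-\theta(1+\delta)\tilde\mu}\,\E\tr\exp(\theta Y).
\]

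Second---and this is the crux---I would control $\E\tr\exp(\theta\sum_i\vec A_i)$ for independent summands. The key device is the subadditivity of the matrix cumulant generating function, a consequence of the Lieb concavity theorem (concavity of $X\mapsto\tr\exp(H+\log X)$ on positive definite $X$), which yields the master bound
\[
\E\tr\exp\Big(\theta\sum_i\vec A_i\Big)\le \tr\exp\Big(\sum_i\log\E\,e^{\theta\vec A_i}\Big).
\]
This is the deep analytic ingredient and the main obstacle: unlike the scalar case the matrix exponential does not split over sums, so independence alone is insufficient and one genuinely needs the Lieb inequality (or a Golden--Thompson route for two-term telescoping).

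Third, I would bound each factor using the eigenvalue constraint $\vec 0\preceq\vec A_i\preceq R\,\vec I$. From the scalar inequality $e^{\theta x}\le 1+\tfrac{e^{\theta R}-1}{R}\,x$, valid on $[0,R]$, together with the transfer principle for matrix functions, $\E\,e^{\theta\vec A_i}\preceq \vec I+g(\theta)\,\E\vec A_i\preceq\exp(g(\theta)\,\E\vec A_i)$ with $g(\theta)=\tfrac{e^{\theta R}-1}{R}>0$. Taking logarithms, summing, and using monotonicity of the trace exponential gives
\[
\E\tr\exp\Big(\theta\sum_i\vec A_i\Big)\le \tr\exp\Big(g(\theta)\sum_i\E\vec A_i\Big)\le N\exp\big(g(\theta)\,\tilde\mu\big),
\]
where the last step uses $\tr\,e^{M}\le N\,e^{\mu_{\max}(M)}$ together with $\mu_{\max}(\sum_i\E\vec A_i)=\tilde\mu$ and $g(\theta)>0$.

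Finally, I would combine the three steps and optimize in $\theta$. The exponent is $-\theta(1+\delta)\tilde\mu+g(\theta)\tilde\mu$; setting its derivative to zero gives $e^{\theta R}=1+\delta$, i.e. $\theta=\tfrac1R\log(1+\delta)$, whence $g(\theta)=\delta/R$ and the bound collapses to
\[
\P\big(\mu_{\max}(Y)\ge(1+\delta)\tilde\mu\big)\le N\exp\Big(\tfrac{\tilde\mu}{R}\big[\delta-(1+\delta)\log(1+\delta)\big]\Big)=N\Big(\tfrac{e^{\delta}}{(1+\delta)^{1+\delta}}\Big)^{\tilde\mu/R},
\]
which is the claimed upper-tail estimate. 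For the lower tail one runs the same argument with $\theta<0$, noting that $g(\theta)<0$ now forces $\mu_{\min}$ and $\mu$ in place of $\mu_{\max}$ and $\tilde\mu$ (since $\mu_{\max}(g(\theta)M)=g(\theta)\mu_{\min}(M)$), and the optimization $e^{\theta R}=1-\delta$ delivers the stated $\big(e^{-\delta}/(1-\delta)^{1-\delta}\big)^{\mu/R}$ bound.
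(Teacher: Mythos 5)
Your proof is correct, but note that the paper does not prove this statement at all: it is quoted verbatim as an external result, citing \cite[Theorem 1.1]{Tr12}, and is only \emph{used} later (in Theorem~\ref{thm:norm_MP}). Your argument — matrix Markov via the trace exponential, subadditivity of the matrix cumulant generating function through Lieb's concavity theorem, the chord bound $e^{\theta x}\le 1+\tfrac{e^{\theta R}-1}{R}x$ on $[0,R]$ transferred to matrices, and optimization at $e^{\theta R}=1\pm\delta$ — is precisely the proof given in the cited reference, so you have correctly reconstructed the source's argument rather than deviated from it.
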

We will use this theorem in Theorem~\ref{thm:norm_MP} to prepare error estimates for the recovery of individual functions. 

Another basic inequality which we will use later is the Bernstein inequality, see~\cite[Theorem 6.12]{Stein08}.
\begin{theorem}\label{thm:bernstein}
Let $\P$ be a probability measure on $\T^d$, $B>0$ and $\sigma>0$ be real numbers and $M\geq 1$ be an integer. Furthermore, let $\xi_1,\ldots ,\xi_M:\T^d\to \R$ be independent random variables satisfying $\E\,\xi_i=0$, $\norm{\xi_i}_\infty\leq B$ and $\E\,\xi_i^2\leq \sigma^2$ for all $i=1,\ldots ,M$. Then we have 
\begin{equation*}
\P\left(\frac 1M \sum_{i=1}^M \xi_i \geq \sqrt{\frac{2\sigma^2\tau}{M}}+\frac{2B\tau}{3M}\right)\leq \e^{-\tau},\quad \quad \quad \tau>0.
\end{equation*}

\end{theorem}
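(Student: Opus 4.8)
The statement is the classical Bernstein inequality, and the plan is to run the exponential-moment (Chernoff) method and then invert the resulting tail. First I would fix $\lambda>0$, apply Markov's inequality to $\e^{\lambda\sum_{i=1}^M\xi_i}$, and use independence to factorize, obtaining $\P\big(\sum_{i=1}^M\xi_i\ge t\big)\le\e^{-\lambda t}\prod_{i=1}^M\E\,\e^{\lambda\xi_i}$. This reduces the whole problem to controlling a single moment generating function $\E\,\e^{\lambda\xi}$ under the assumptions $\E\,\xi=0$, $\norm{\xi}_\infty\le B$ and $\E\,\xi^2\le\sigma^2$.

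The technical heart, which I expect to be the main obstacle, is the bound on this moment generating function. I would expand $\e^{\lambda\xi}$ as a power series, use $\E\,\xi=0$ to annihilate the linear term, and estimate $\E\,\xi^k\le\sigma^2B^{k-2}$ for $k\ge2$ from the uniform bound $\norm{\xi}_\infty\le B$. The decisive elementary estimate is $k!\ge2\cdot3^{k-2}$, which converts the remaining series into a geometric one and yields $\E\,\e^{\lambda\xi}\le\exp\!\big(\tfrac{\lambda^2\sigma^2/2}{1-\lambda B/3}\big)$ for $0<\lambda<3/B$. Pinning down precisely the constant $1/3$ in the denominator is exactly what this factorial inequality buys, so this is the step requiring the most care.

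Multiplying the $M$ factors and inserting $\E\,\xi_i^2\le\sigma^2$ gives $\P\big(\sum_i\xi_i\ge t\big)\le\exp\!\big(-\lambda t+\tfrac{M\lambda^2\sigma^2/2}{1-\lambda B/3}\big)$. I would then optimize over $\lambda$, choosing $\lambda=t/(M\sigma^2+Bt/3)$, which satisfies $\lambda B<3$ automatically; a short computation collapses the exponent to the familiar Bernstein form
\[
\P\Big(\sum_{i=1}^M\xi_i\ge t\Big)\le\exp\!\Big(-\frac{t^2/2}{M\sigma^2+Bt/3}\Big).
\]

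It remains to invert this bound to the threshold in the statement. Setting the exponent equal to $-\tau$ is a quadratic in $t$ with positive root $t^\ast=\tfrac{\tau B}{3}+\sqrt{(\tfrac{\tau B}{3})^2+2\tau M\sigma^2}$, and the elementary inequality $\sqrt{a+b}\le\sqrt a+\sqrt b$ gives $t^\ast\le\sqrt{2\tau M\sigma^2}+\tfrac{2\tau B}{3}$. Dividing by $M$ shows this upper bound is exactly $M$ times the threshold $\sqrt{2\sigma^2\tau/M}+2B\tau/(3M)$. Since $t\mapsto\tfrac{t^2/2}{M\sigma^2+Bt/3}$ is monotonically increasing for $t\ge0$, replacing $t^\ast$ by this larger threshold can only decrease the tail probability, so the bound $\le\e^{-\tau}$ is preserved and the proof is complete.
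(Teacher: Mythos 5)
Your proof is correct in all details: the factorial bound $k!\ge 2\cdot 3^{k-2}$, the moment-generating-function estimate, the choice $\lambda = t/(M\sigma^2+Bt/3)$ (which indeed collapses the exponent to $-\tfrac{t^2/2}{M\sigma^2+Bt/3}$), and the final inversion via $\sqrt{a+b}\le\sqrt a+\sqrt b$ together with the monotonicity of $t\mapsto\tfrac{t^2/2}{M\sigma^2+Bt/3}$ all check out. Note, however, that the paper does not prove this statement at all: Theorem~\ref{thm:bernstein} is quoted as a known tool from \cite[Theorem 6.12]{Stein08}, so there is no internal proof to compare against. Your argument is the standard exponential-moment (Chernoff) derivation of Bernstein's inequality, which is essentially the proof given in that cited reference, so you have simply supplied the omitted proof of a black-box ingredient.
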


\subsection{Hyperbolic wavelet regression}\label{sec:HWR}
So far we have bounded the error between a function and the approximation 
operator $P_nf$, defined in~\eqref{eq:defPn}, in Corollary~\ref{cor:error_bound}. Now we want to consider the case 
where we have random sample points $\vec x\in \X\subset \T^d$ with cardinality $|\X|=M$ together 
with the function values $\vec y = (f(\vec x))_{\vec x\in \X}$. In the sequel we always consider the case where the samples $\vec x\in \X$ are drawn i.i.d. at random according to the uniform Lebesgue measure. For that reason we introduce the $d$-dimensional probability measure $\d \P= \otimes_{i=1}^d \d x_i$. 
In this scenario we do not have the 
wavelet coefficients $\langle f, \psi_{\vec j,\vec k}^{\per,*}\rangle$ at hand.  
However, we study least squares solutions of the overdetermined system
\begin{equation}\label{eq:Aa=y}
\vec A\vec a=\vec y,
\end{equation}
where $\vec A=(\psi_{\vec j,\vec k}^\per(\vec x))_{\vec x\in \X,\stackrel{\vec j\in \J_n}{\vec k\in \I_\vec j}}\in \C^{M\times N}$ 
is the \textit{hyperbolic wavelet matrix} with $M>N$. At some point we will reduce the number of columns of the hyperbolic 
wavelet matrix $\vec A$. For that reason we will always denote the number of 
parameters, i.e. the number of columns of our wavelet matrix, by $N$.
In order to also minimize the $L_2(\T^d)$-error, we minimize the residual $\norm{\vec A\vec a-\vec y}_2$.  
Multiplying the system~\eqref{eq:Aa=y}
with $\vec A^*$ gives 
$$\vec A^*\vec A\vec a=\vec A^*\vec y.$$
If the hyperbolic wavelet matrix $\vec A$ has full rank, the unique solution of the least squares problem is
$$\vec a = \left(\vec A^*\vec A\right)^{-1}\vec A^*\vec y=:\vec A^+\vec y.$$
Computing these coefficients $\vec a$ gives us the wavelet coefficients of an approximation $S_n^\X f$ to $f$, i.e
\begin{equation}\label{eq:def_S_n^X}
S_n^\X f:=\sum_{\vec j\in \J_n}\sum_{\vec k\in \I_j}a_{\vec j,\vec k}\psi_{\vec j,\vec k}^\per. 
\end{equation}
To compute this approximant $S_n^\X f$ numerically, we have to ensure that the condition number of the 
matrix $\vec A^*\vec A$ is bounded away from zero. 
In order to apply Theorem~\ref{thm:matrix_chernoff} to our purposes, we use for $i=1,\ldots M$ and $\vec x_i\in \X$ the matrices 
\begin{equation*}\label{eq:def_A_i}
\vec A_i=\frac 1M \left(\left(\psi^\per_{\vec j,\vec k}(\vec x_i)\right)_{\stackrel{\vec j\in \J_n}{\vec k\in \I_\vec j}}\right)\left(\left(\psi^\per_{\vec j,\vec k}(\vec x_i)\right)_{\stackrel{\vec j\in \J_n}{\vec k\in \I_\vec j}}\right)^\top.
\end{equation*} 
Hence, we have $\sum_{i=1}^M \vec A_i=\frac 1M \vec A^*\vec A$. Additionally, these matrices fulfill the conditions 
in Theorem~\ref{thm:matrix_chernoff}. Since we will often consider the mass matrix 
\begin{equation}\label{eq:Lambda}
\vec \Lambda :=\frac 1M \,\E(\vec A^*\vec A),
\end{equation}
we have a closer look to its structure. In fact, the matrix $\vec \Lambda$ has entries 
$\langle\psi_{\vec j,\vec k}^\per,\psi_{\vec i,\vec \ell}^\per\rangle$, that are zero for $\vec j\neq \vec i$ 
because of the orthogonality of the one-dimensional wavelets for different scales $j$ and $i$, see~\eqref{eq:sca_psi_per}. We denote the entries of the matrix $\vec \Lambda$ for $\vec k\in \I_\vec j$ by
\begin{align*}\label{eq:def_lambda}
\lambda_{\vec j,\vec k}:&=\prod_{i=1}^{d} \langle\psi_{j_i, 0}^\per,\psi_{ j_i,k_i }^\per\rangle=\prod_{i=1}^{d }\int_{\T}\psi^\per_{j_i,0}(x_i)\psi^\per_{j_i,k_i}(x_i)\d x .
\end{align*}
Having a closer look at these entries, we see that there are only at most $ \left\lceil 2S-1\right\rceil$ ones of the $\lambda_{ j,k}$ non-zero 
for every one-dimensional index $j$. Additionally, these non-zero entries are the same for every index $ j$. Furthermore, the matrix $\vec \Lambda$ is symmetric.
Since the matrix $\vec \Lambda$ has the entries $\langle\psi_{\vec j,\vec k}^\per,\psi_{\vec i,\vec \ell}^\per\rangle$, 
it has a block structure and every block is dedicated to one index $\vec j$. Therefore we introduce the partial matrices 
\begin{equation}\label{eq:Lambda_j}
\vec \Lambda_{\vec j} := \left(\lambda_{\vec j,\vec k-\vec \ell}\right)_{\vec k\in \I_{\vec j},\vec \ell\in \I_{\vec j}} = \otimes_{i=1}^d \cir \left(\left(\lambda_{j_i,k_i}\right)_{k_i\in \I_{j_i}} \right),
\end{equation}
where the circulant matrices $\cir \vec y\in \C^{r\times r}$ are defined by $(\cir\vec y)_{i,j}=y_{i-j+1 \mod r}$ and $\otimes$ denotes the Kronecker product of matrices.
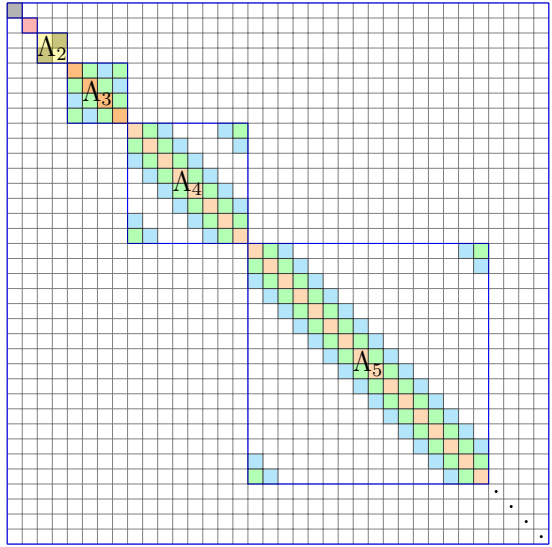
\begin{figure}[ht]\centering
\begin{tikzpicture} [scale=0.20]
\draw[step=1, gray, very thin] (0,0) grid (36,36);
\draw[blue] (0,0) -- (36,0);
\draw[blue] (0,0) -- (0,36);
\draw[blue] (36,0) -- (36,36);
\draw[blue] (0,36) -- (36,36);
\draw[blue] (0,35) -- (2,35);
\draw[blue] (1,36) -- (1,34);
\draw[blue] (2,35) -- (2,32);
\draw[blue] (1,34) -- (4,34);
\draw[blue] (4,34) -- (4,28);
\draw[blue] (2,32) -- (8,32);
\draw[blue] (8,32) -- (8,20);
\draw[blue] (4,28) -- (16,28);
\draw[blue] (16,28) -- (16,4);
\draw[blue] (8,20) -- (32,20);
\draw[blue] (32,20) -- (32,4);
\draw[blue] (16,4) -- (32,4);
\fill (32.5,3.5) circle (0.1);
\fill (33.5,2.5) circle (0.1);
\fill (34.5,1.5) circle (0.1);
\fill (35.5,0.5) circle (0.1);
\filldraw[fill=black,opacity=0.3] (0,35) rectangle (1,36);
\filldraw[fill=red,opacity=0.3] (1,34) rectangle (2,35);
\filldraw[fill=black,opacity=0.3] (2,32) rectangle (3,33);
\filldraw[fill=black,opacity=0.3] (3,33) rectangle (4,34);
\filldraw[fill=yellow,opacity=0.3] (2,32) rectangle (4,34);
\foreach \r in {4, 5,..., 31}
\filldraw[fill=orange,opacity=0.3] (\r,36-\r) rectangle (\r+1,36-\r-1);
\foreach \r in {4, 5,..., 7}
\filldraw[fill=orange,opacity=0.3] (\r,36-\r) rectangle (\r+1,36-\r-1);
\foreach \r in {5, 6,..., 7}
\filldraw[fill=green,opacity=0.3] (\r,37-\r) rectangle (\r+1,37-\r-1);
\foreach \r in {4, 5,..., 6}
\filldraw[fill=green,opacity=0.3] (\r,35-\r) rectangle (\r+1,35-\r-1);
\filldraw[fill=green,opacity=0.3] (4,28) rectangle (5,29);
\filldraw[fill=green,opacity=0.3] (7,31) rectangle (8,32);
\foreach \r in {9, 10,..., 15}
\filldraw[fill=green,opacity=0.3] (\r,37-\r) rectangle (\r+1,37-\r-1);
\foreach \r in {8, 9,..., 14}
\filldraw[fill=green,opacity=0.3] (\r,35-\r) rectangle (\r+1,35-\r-1);
\filldraw[fill=green,opacity=0.3] (8,20) rectangle (9,21);
\filldraw[fill=green,opacity=0.3] (15,27) rectangle (16,28);
\foreach \r in {17, 18,..., 31}
\filldraw[fill=green,opacity=0.3] (\r,37-\r) rectangle (\r+1,37-\r-1);
\foreach \r in {16, 17,..., 30}
\filldraw[fill=green,opacity=0.3] (\r,35-\r) rectangle (\r+1,35-\r-1);
\filldraw[fill=green,opacity=0.3] (16,4) rectangle (17,5);
\filldraw[fill=green,opacity=0.3] (31,19) rectangle (32,20);
\filldraw[fill=cyan,opacity=0.3] (4,29) rectangle (5,30);
\filldraw[fill=cyan,opacity=0.3] (5,28) rectangle (6,29);
\filldraw[fill=cyan,opacity=0.3] (6,31) rectangle (7,32);
\filldraw[fill=cyan,opacity=0.3] (7,30) rectangle (8,31);
\foreach \r in {10, 11,..., 15}
\filldraw[fill=cyan,opacity=0.3] (\r,38-\r) rectangle (\r+1,38-\r-1);
\foreach \r in {8, 9,..., 13}
\filldraw[fill=cyan,opacity=0.3] (\r,34-\r) rectangle (\r+1,34-\r-1);
\filldraw[fill=cyan,opacity=0.3] (9,20) rectangle (10,21);
\filldraw[fill=cyan,opacity=0.3] (8,21) rectangle (9,22);
\filldraw[fill=cyan,opacity=0.3] (14,27) rectangle (15,28);
\filldraw[fill=cyan,opacity=0.3] (15,26) rectangle (16,27);
\foreach \r in {18, 19,..., 31}
\filldraw[fill=cyan,opacity=0.3] (\r,38-\r) rectangle (\r+1,38-\r-1);
\foreach \r in {16, 17,..., 29}
\filldraw[fill=cyan,opacity=0.3] (\r,34-\r) rectangle (\r+1,34-\r-1);
\filldraw[fill=cyan,opacity=0.3] (17,4) rectangle (18,5);
\filldraw[fill=cyan,opacity=0.3] (16,5) rectangle (17,6);
\filldraw[fill=cyan,opacity=0.3] (30,19) rectangle (31,20);
\filldraw[fill=cyan,opacity=0.3] (31,18) rectangle (32,19);

\node[scale=0.9] at (3,33) {$\Lambda_{2}$};
\node[scale=0.9] at (6,30) {$\Lambda_{3}$};
\node[scale=0.9] at (12,24) {$\Lambda_{4}$};
\node[scale=0.9] at (24,12) {$\Lambda_{5}$};
\end{tikzpicture} \caption{Illustration of the non-zero entries of the matrix $\vec \Lambda$ for $d=1$.}
\label{fig:Lambda}
\end{figure}
Figure~\ref{fig:Lambda} shows the structure of $\vec \Lambda$ for $d=1$ and the Chui-Wang wavelets of order $m=2$, which 
mean, in every column of every block there are at most $4m-1$ non-zero entries, only $2m-1$ of them are different. Equal 
colors in the picture stand for equal matrix entries. In higher dimensions we get one block for every index $\vec j$, which 
is the Kronecker product of the one-dimensional circulant matrices.

Having these only few different entries of $\vec \Lambda$ in mind, we can bound the lowest eigenvalue of this matrix away from zero. 
\begin{lemma}\label{lem:bound_eigenvalues}
Let the matrix $\vec \Lambda$ be defined like in~\eqref{eq:Lambda}. Then we can bound the eigenvalues of this matrix by
\begin{align}
\mu_{\min}(\vec \Lambda)&\geq \gamma_m^d, \label{eq:lambda_min}\\
\mu_{\max}(\vec \Lambda)&\leq \max\{1,\delta_m^d\}, \label{eq:lambda_max}
\end{align}
where the constants $\gamma_m$ and $\delta_m$ are the Riesz bounds from~\eqref{eq:Riesz}.
\end{lemma}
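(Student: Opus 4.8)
The plan is to exploit the block structure of $\vec\Lambda$ together with the Kronecker product decomposition \eqref{eq:Lambda_j} of each block, reducing everything to a one-dimensional eigenvalue estimate that follows directly from the Riesz basis property \eqref{eq:Riesz}.

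First I would record that $\vec\Lambda$ is block diagonal: by the orthogonality relation \eqref{eq:sca_psi_per} the entry $\langle\psi_{\vec j,\vec k}^\per,\psi_{\vec i,\vec\ell}^\per\rangle$ vanishes unless $\vec j=\vec i$, so the spectrum of $\vec\Lambda$ is the union of the spectra of the blocks $\vec\Lambda_{\vec j}$, $\vec j\in\J_n$, and consequently
$$\mu_{\min}(\vec\Lambda)=\min_{\vec j\in\J_n}\mu_{\min}(\vec\Lambda_{\vec j}),\qquad \mu_{\max}(\vec\Lambda)=\max_{\vec j\in\J_n}\mu_{\max}(\vec\Lambda_{\vec j}).$$
By \eqref{eq:Lambda_j} each block is the Kronecker product $\vec\Lambda_{\vec j}=\otimes_{i=1}^d \vec\Lambda_{j_i}^{(1)}$ of the one-dimensional circulant Gram matrices $\vec\Lambda_{j}^{(1)}:=\cir((\lambda_{j,k})_{k\in\I_j})$. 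Since all factors are positive definite, the eigenvalues of $\vec\Lambda_{\vec j}$ are exactly the products of the one-dimensional eigenvalues, whence
$$\mu_{\min}(\vec\Lambda_{\vec j})=\prod_{i=1}^d\mu_{\min}(\vec\Lambda_{j_i}^{(1)}),\qquad \mu_{\max}(\vec\Lambda_{\vec j})=\prod_{i=1}^d\mu_{\max}(\vec\Lambda_{j_i}^{(1)}).$$

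Next I would bound the one-dimensional eigenvalues. For $j\ge 0$ and any coefficient vector $\vec d=(d_k)_{k\in\I_j}$ the Rayleigh quotient is
$$\vec d^*\vec\Lambda_{j}^{(1)}\vec d=\Big\langle\sum_{k}d_k\psi_{j,k}^\per,\sum_{k}d_k\psi_{j,k}^\per\Big\rangle=\Big\|\sum_{k}d_k\psi_{j,k}^\per\Big\|_{L_2(\T)}^2,$$
so the Riesz bounds \eqref{eq:Riesz} give immediately $\gamma_m\le\mu_{\min}(\vec\Lambda_{j}^{(1)})\le\mu_{\max}(\vec\Lambda_{j}^{(1)})\le\delta_m$. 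For the scaling direction $j=-1$ the block is the $1\times1$ matrix $\langle 1_\T,1_\T\rangle=1$, so its only eigenvalue equals $1$.

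Finally I would assemble the estimates. Writing $\vec u=\{i:j_i\ge0\}$, every factor of $\mu_{\min}(\vec\Lambda_{\vec j})$ is either $\ge\gamma_m$ (when $j_i\ge0$) or equal to $1$ (when $j_i=-1$); since $\gamma_m\le1$ (test \eqref{eq:Riesz} with a single coefficient at a level where the periodized wavelets are $L_2(\T)$-normalized, cf. Table~\ref{tab:Riesz-const}) each factor is $\ge\gamma_m$, so $\mu_{\min}(\vec\Lambda_{\vec j})\ge\gamma_m^d$ uniformly in $\vec j$, which yields \eqref{eq:lambda_min}. For the upper bound the same bookkeeping gives $\mu_{\max}(\vec\Lambda_{\vec j})\le\delta_m^{|\vec u|}$; if $\delta_m\ge1$ this is at most $\delta_m^d$, while if $\delta_m<1$ it is at most $1$, so $\mu_{\max}(\vec\Lambda_{\vec j})\le\max\{1,\delta_m^d\}$ in all cases, which is \eqref{eq:lambda_max}. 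The only genuinely delicate points are the bookkeeping for the scaling direction $j=-1$, whose eigenvalue $1$ must be reconciled with the wavelet blocks, and the observation that $\delta_m$ may be smaller than one (cf. Table~\ref{tab:Riesz-const}), which is precisely why the maximum with $1$ is unavoidable in \eqref{eq:lambda_max}.
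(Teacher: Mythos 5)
Your proof is correct, and its multivariate skeleton coincides with the paper's: both exploit the block diagonality of $\vec \Lambda$, the fact that eigenvalues of a Kronecker product are products of the factors' eigenvalues, and the eigenvalue $1$ coming from the scaling direction $j=-1$. Where you genuinely differ is the one-dimensional step. You observe that each block $\cir(\vec \lambda_j)$ is precisely the Gram matrix of $\{\psi_{j,k}^\per\}_{k\in\I_j}$, so the Riesz property \eqref{eq:Riesz} is literally a two-sided bound on its Rayleigh quotient, and the variational characterization of eigenvalues gives $\gamma_m\le\mu_{\min}\le\mu_{\max}\le\delta_m$ in one line. The paper instead diagonalizes each circulant block by the Fourier matrix, computes its eigenvalues as the values of the symbol $E(w^r_{2^j})=\sum_{s\in\Z}|c_{r/2^j+s}(\psi)|^2$ at the roots of unity, and then verifies in a separate calculation that the extreme values of this symbol coincide with the Riesz constants. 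Your route is shorter and avoids the Fourier computation entirely; the paper's longer route buys explicit eigenvalue formulas, which it exploits in the remark after the lemma identifying $E$ with the Euler--Frobenius polynomial for Chui--Wang wavelets. A further point in your favor is that you make explicit the bookkeeping the paper passes over silently: for mixed blocks (some $j_i=-1$, some $j_i\ge 0$) the minimal eigenvalue is only $\gamma_m^{|\vec u|}$, so one needs $\gamma_m\le 1$ (which your single-coefficient test of \eqref{eq:Riesz} justifies, and which is in any case necessary for the lemma, since the all-scaling block forces $\mu_{\min}(\vec \Lambda)\le 1$) to conclude $\mu_{\min}(\vec \Lambda)\ge\gamma_m^d$, and the possibility $\delta_m<1$ (visible in Table~\ref{tab:Riesz-const}) is exactly why the upper bound must read $\max\{1,\delta_m^d\}$ rather than $\delta_m^d$.
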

\begin{proof}
As usual, we begin with the one-dimensional case. As mentioned before this lemma, this matrix has only few non-zero entries. In fact, for $d=1$ 
we have a block-diagonal matrix with blocks belonging to every $j$, see also \ref{fig:Lambda}. To be precise, the blocks are circulant matrices
$$ \cir \left((\lambda_{ j,k})_{k\in I_j}\right)=\cir (\vec \lambda_j).$$
For the case $j = -1$ this is only $1$, which is also the eigenvalue. For $j\geq 0$ we use \cite[Theorem 3.31]{PlPoStTa18} and we write a circulant matrix as
\begin{equation*}\label{eq:circ}
\cir (\vec \lambda_j) = \vec F_{2^j}^{-1} \diag(\vec F_{2^j}(\vec \lambda_j))\vec F_{2^j},
\end{equation*}
where $\vec F_{2^j}=\left(w_{2^j}^{k\ell}\right)_{k,\ell=0}^{2^j}$ is the Fourier matrix of dimension $2^j$ with the 
primitive $2^j$-th roots of unity $w_{2^j}:=\e^{-2\pi \im 2^{-j}}$. In order to bound the 
eigenvalues of the matrix $\vec \Lambda$, we have to determine the infimum and supremum of all eigenvalues of all blocks $\vec \Lambda_j$. 
Since the Fourier matrices are orthogonal, we have
$$\mu_{\min}(\cir (\vec \Psi_j))=\min |\vec F_{2^j}(\vec \lambda_{j})|,$$
and analog for the maximum. To bound this term we calculate the Fourier coefficients (see \eqref{eq:Fourier}) of the wavelets $\psi_{j,k}(x)$ using 
substitution in the integral by
\begin{align*}
c_\ell(\psi_{j,k}) 
&= 2^{-\tfrac{j}{2}}w_{2^j}^{\ell k}c_{\tfrac{\ell}{2^j}}(\psi).
\end{align*}
We begin with the case where $j$ is big enough, such that $2^j>S$, so that $\vec \lambda_j=\left(\langle\psi_{j,0},\psi_{j,k}\rangle\right)_{k\in \I_j}$. Therefore we get, using Parsevals' equality,
\begin{align*}
\langle\psi_{j,0},\psi_{j,k}\rangle 
&= \sum_{\ell\in \Z} c_\ell(\psi_{j,0}) c_{-\ell}(\psi_{j,k})
=\sum_{\ell\in \Z}  w_{2^j}^{-\ell k}2^{-j}c_{\tfrac{\ell}{2^j}}(\psi) c_{\tfrac{-\ell}{2^j}}(\psi)
= \sum_{r=0}^{2^j-1} w_{2^j}^{-r k}2^{-j}\sum_{s\in\Z} \left|c_{\tfrac{r}{2^j}+s}(\psi)\right|^2.
\end{align*}
We denote 
$$E(w^r_{2^j})=\sum_{s\in\Z} \left|c_{\tfrac{r}{2^j}+s}(\psi)\right|^2.$$
Hence, 
\begin{align*}
(\vec F_{2^j}(\vec \lambda_{j}))_i
=\sum_{k=0}^{2^j-1}w_{2^j}^{ik}\langle\psi_{j,0},\psi_{j,k}\rangle
=\sum_{k=0}^{2^j-1}\sum_{r=0}^{2^j-1} w_{2^j}^{ik}w_{2^j}^{-r k}2^{-j}E(w^r_{2^j})
=E(w^i_{2^j}).
\end{align*}
For the other case, where $2^j\leq S$, we get the same estimates in a similar way.
Therefore the eigenvalues of our block of the desired matrix $\vec \Lambda$ are bounded by
$$\min_{r}E(w^r_{2^j})\leq\mu_{\min}(\vec \Lambda_j)\leq\mu_{\max}(\vec \Lambda_j)\leq \sup_{r}E(w^r_{2^j}).$$
These extreme values coincide with the Riesz constants, which can be seen as follows
\begin{align*}
\left\|\sum_{k=0}^{2^j-1}d_{j,k}\psi_{j,k}\right\|_{L_2(\T)}^2
&=\sum_{\ell\in \Z}\sum_{k_1,k_2=0}^{2^j-1}d_{j,k_1}\,d_{j,k_2}c_{\ell}(\psi_{j,k_1})c_{-\ell}(\psi_{j,k_2})\\
&= \sum_{\ell\in \Z}\sum_{k_1,k_2=0}^{2^j-1}d_{j,k_1}\,d_{j,k_2}w^{\ell(k_1-k_2)}2^{-j}|c_{\tfrac{\ell}{2^j}}(\psi)|^2\\
&=\sum_{r=0}^{2^j-1}\sum_{k_1,k_2=0}^{2^j-1}d_{j,k_1}\,d_{j,k_2}w^{r(k_1-k_2)}2^{-j}\sum_{s\in \Z}|c_{\tfrac{r}{2^j}+s}(\psi)|^2\\
&=\sum_{r=0}^{2^j-1}\left|\hat{\vec d}_r\right|^2 \, 2^{-j} \, E(w^r_{2^j}),
\end{align*}
where $\hat{\vec d}_r$ is the $r$-th component of $\hat{\vec d}=\vec F_{2^j}\vec d=\vec F_{2^j}(d_{j,k})_{k\in \I_j}.$ Taking into account that $\norm{\hat{\vec d}}_{2}^2=2^j\norm{\vec d}_2^2$, the one-dimensional assertion follows. 

To generalize this to the multi-dimensional case, we have a closer look at the matrix $\vec \Lambda$ for $d>1$. 
Again we have a block diagonal matrix, because of the orthogonality of the wavelets $\psi_{\vec j,\vec k}^\per$ for 
different scales $\vec j$. So according to every $\vec j\geq -\vec 1$, we have a block in the matrix $\vec \Lambda$. 
Because of the tensor product form of our wavelet functions $\psi_{\vec j,\vec k}^\per$, we order the functions 
$\psi_{\vec j,\vec k}^\per$ in the matrix $\vec A$ such that the block belonging to $\vec j$ is equal to the Kronecker product 
$$\otimes_{i=1}^{d} (\cir \vec \lambda_{j_i}).$$  
Since the eigenvalues of the Kronecker product of a matrix are the products of the eigenvalues of the matrices, 
we can bound the smallest eigenvalue of every block matrix by $\gamma_m^d$ and the largest eigenvalues by $\delta_m^d$. 
The eigenvalue $\mu =1$ is explained by the first block for $\vec j= -\vec 1$, which is basically $1$.
\end{proof}
Let us again consider the example of the Chui-Wang wavelets from Example~\ref{ex:Chui-Wang}. The function $E$ 
in the previous proof is in this case the Euler-Frobenius polynomial $\Psi_{2m}$ from $\cite{PlTa94}$. From there 
we also get the Riesz-constants, which are summarized in Table~\ref{tab:Riesz-const}.

The previous Lemma gives us one constant in Theorem~\ref{thm:matrix_chernoff}. For the other constant $R$ let us introduce the spectral function
\begin{align}\label{eq:R(n)}
R(n):&=\sup_{\vec x\in \T^d}\sum_{\vec j\in \J_n}\sum_{\vec k\in \I_\vec j} |\psi_{\vec j, \vec k}^\per(\vec x)|^2.
\end{align}
In order to give an estimation of the complexity of $R(n)$ we denote for every $\vec j \geq -\vec 1$ the subset 
of indices $\vec u=\{i\in [d]\mid j_i\geq 0\}$. Hence, there holds
\begin{align}\label{eq:RN_bound}
R(n)&=\sup_{\vec x\in \T^d}\sum_{\vec j\in \J_n}\sum_{\vec k\in \I_\vec j}2^{|\vec j_\vec u|_1} |\sum_{\ell_\vec u\in \Z^{|\vec u|}}\psi(2^{\vec j_\vec u}(\vec x_\vec u+\vec \ell_\vec u)-\vec k_\vec u)|^2\notag\\
&\leq\sum_{\vec j\in \J_n}2^{|\vec j|_1} \sup_{\vec x\in \T^d}\sum_{\vec k_\vec u\in \Z^{|\vec u|}} |\psi(2^{\vec j_\vec u}\vec x_\vec u-\vec k_\vec u)|^2\notag\\
&=\sum_{\vec j\in \J_n} 2^{|\vec j|_1}\prod_{i=1}^{|\vec u|} \left(\sup_{x_i\in \T}\sum_{ k_i \in \Z} |\psi(2^{j_i} x_i-k_i)|^2\right)\notag\\
&=\sum_{\vec j\in \J_n} 2^{|\vec j|_1}\prod_{i=1}^{|\vec u|} \left(\sup_{y_i\in [-2^{j_i-1},2^{j_i-1})}\sum_{ k_i \in \Z} |\psi(y_i-k_i)|^2\right)\notag\\
&\leq  N \left(\sup_{x\in \R}\sum_{ k \in \Z} |\psi(x-k)|^2\right)^d=: N c_{\psi}^d,
\end{align}
where we use \eqref{eq:Nsum}.  
The supremum of $\sum_{ k \in \Z} |\psi(x-k)|^2$ is a constant, since the wavelet $\psi$ is compactly supported on $[0,S]$. 
In Table~\ref{tab:constants} we calculated these constants for the Chui-Wang wavelets of different orders.
\begin{table}[htb]\centering
\begin{tabular}{c|ccccc}
    \hline
    $m$   & $1$ &$2$&$3$&$4$ &$5$\\
    \hline
		$c_{\psi}$ & $1$ &$0.7083$	&$0.1479$ &$0.0662$&$0.0252$\\
		\hline

  \end{tabular}
\caption{Constants in~\eqref{eq:RN_bound} for the Chui-Wang wavelets.}
\label{tab:constants}
\end{table}

Now we are in the position to apply Theorem~\ref{thm:matrix_chernoff} for our setting.
\begin{theorem}\label{thm:norm_MP}
Let $\vec x\in \X$ drawn i.i.d. and uniformly at random, the wavelet function $\psi$ having vanishing moments of order $m$, 
$r>1$ and $\gamma_m$ the Riesz constant from~\eqref{eq:Riesz}. 
Then the matrix $\frac 1M \vec A^*\vec A$, where $\vec A$ is the hyperbolic wavelet matrix from~\eqref{eq:Aa=y},   
only has eigenvalues greater than $\tfrac{ \gamma_m^d}{2}$ with probability at least $1-M^{-r}$ if 
\begin{equation}\label{eq:RN_leq}
R(n)\leq \frac{ c_{m,d}\,M}{(r+1)\log M},
\end{equation}
with the $m$- and $d$-dependent constant 
\begin{equation}\label{eq:cm}
c_{m,d} = \gamma_m^d\log\left(\sqrt{\tfrac{\e}{2}}\right)\approx 0.153\, \gamma_m^d.
\end{equation}
Especially, we have for the operator norm 
\begin{equation}\label{eq:norm_MP}
\norm{\vec A^+}_2\leq \sqrt{\frac{ 2}{ M\,\gamma_m^d}}.
\end{equation}
\end{theorem}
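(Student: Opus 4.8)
The plan is to apply the Matrix Chernoff bound of Theorem~\ref{thm:matrix_chernoff} to the rank-one summands $\vec A_i$ introduced just before the statement, which satisfy $\sum_{i=1}^M\vec A_i=\frac 1M\vec A^*\vec A$ and $\sum_{i=1}^M\E(\vec A_i)=\vec\Lambda$ with $\vec\Lambda$ from~\eqref{eq:Lambda}. First I would record the two constants that feed into that theorem. Each $\vec A_i=\frac 1M\vec v_i\vec v_i^\top$ with $\vec v_i=(\psi^\per_{\vec j,\vec k}(\vec x_i))_{\vec j\in\J_n,\vec k\in\I_\vec j}$ is positive semidefinite of rank one, so its only nonzero eigenvalue is $\frac 1M\norm{\vec v_i}_2^2=\frac 1M\sum_{\vec j,\vec k}|\psi^\per_{\vec j,\vec k}(\vec x_i)|^2\leq R(n)/M$ by the definition~\eqref{eq:R(n)} of the spectral function. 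Hence $R:=R(n)/M$ serves as the almost-sure upper bound on $\mu_{\max}(\vec A_i)$. For the lower end, Lemma~\ref{lem:bound_eigenvalues} supplies $\mu:=\mu_{\min}(\vec\Lambda)\geq\gamma_m^d$.

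With these quantities in hand I would invoke the lower-tail estimate of Theorem~\ref{thm:matrix_chernoff} with $\delta=\tfrac 12$, using $\mu\geq\gamma_m^d$ to enlarge the event, which gives
\begin{equation*}
\P\Bigl(\mu_{\min}\bigl(\tfrac 1M\vec A^*\vec A\bigr)\leq\tfrac 12\gamma_m^d\Bigr)
\leq\P\Bigl(\mu_{\min}\bigl(\tfrac 1M\vec A^*\vec A\bigr)\leq\tfrac 12\mu\Bigr)
\leq N\Bigl(\tfrac{\e^{-1/2}}{(1/2)^{1/2}}\Bigr)^{\mu/R}.
\end{equation*}
A short computation identifies the base as $\e^{-1/2}\sqrt 2=\sqrt{2/\e}<1$, whose natural logarithm equals $-\log\sqrt{\e/2}$, precisely the factor appearing in the constant $c_{m,d}$ of~\eqref{eq:cm}. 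Since $\mu/R\geq\gamma_m^d M/R(n)$ and the base is below one, taking logarithms bounds the right-hand side by $\exp\bigl(\log N-\tfrac{\gamma_m^d M}{R(n)}\log\sqrt{\e/2}\bigr)$.

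It remains to feed in the hypothesis~\eqref{eq:RN_leq}. Rearranging it yields $\frac{\gamma_m^d M}{R(n)}\log\sqrt{\e/2}\geq(r+1)\log M$; because the system is overdetermined we have $N<M$, hence $\log N<\log M$ and therefore $(r+1)\log M>\log N+r\log M$. Combining, the exponent is at most $\log N-(\log N+r\log M)=-r\log M$, so the failure probability is bounded by $M^{-r}$, as claimed. Finally, on the complementary event every eigenvalue of $\frac 1M\vec A^*\vec A$ exceeds $\gamma_m^d/2$, so $\mu_{\min}(\vec A^*\vec A)>M\gamma_m^d/2$; since $\norm{\vec A^+}_2=\mu_{\min}(\vec A^*\vec A)^{-1/2}$, this is exactly~\eqref{eq:norm_MP}.

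The only place needing genuine care is the bookkeeping of the constant: one must verify that the choice $\delta=\tfrac 12$ produces exactly the factor $\log\sqrt{\e/2}$ entering $c_{m,d}$, and that it is the overdetermination $N<M$ which lets the stray $\log N$ be absorbed into $(r+1)\log M$. Everything else is a direct substitution into the two preparatory results, namely the spectral bound~\eqref{eq:R(n)} and the eigenvalue bound of Lemma~\ref{lem:bound_eigenvalues}.
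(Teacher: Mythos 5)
Your proposal is correct and follows essentially the same route as the paper: Matrix Chernoff with $R=R(n)/M$, $\delta=\tfrac12$, the eigenvalue bound $\mu_{\min}(\vec\Lambda)\geq\gamma_m^d$ from Lemma~\ref{lem:bound_eigenvalues}, the hypothesis~\eqref{eq:RN_leq} together with $N\leq M$ to absorb the $\log N$ term, and finally the identity $\norm{\vec A^+}_2=\mu_{\min}(\vec A^*\vec A)^{-1/2}$. Your explicit monotonicity/enlargement step (passing from $\mu$ to $\gamma_m^d$) is in fact a slightly more careful rendering of what the paper does implicitly.
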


\begin{proof}
We have that 
$$\mu_{\max}(\vec A_i)= \frac 1M \left\|\left(\psi^\per_{\vec j,\vec k}(\vec x_i)\right)_{\stackrel{\vec j\in \J_n}{\vec k\in \I_\vec j}}\right\|_2^2\leq \frac{R(n)}{M},\quad \text{ for all } i = 1,\ldots, M.$$
Hence, we use $R= \frac{R(n)}{M}$, $\mu_{\min}=\gamma_m^d$ and $\delta = \tfrac{1}{2}$ in Theorem~\ref{thm:matrix_chernoff}.
Using the bound in \eqref{eq:RN_leq} and $N\leq M$, we confirm 
\begin{align*}
R(n)&\leq \frac{\gamma_m^d\, M \log\left(\sqrt{\tfrac{\e}{2} }\right)}{(r+1)\log M}
= c_{m,d}\,\frac{M}{(r+1)\log M}\\
R(n) &\leq \frac{-\gamma_m^d\, M \log\left(\sqrt{\tfrac{2}{\e}}\right)}{r\log M+\log N}\\
\log N + \frac{\gamma_m^d M}{R(n)} \log\left(\sqrt{\tfrac{2}{\e} }\right)&\leq -r\log M\\
N\left(\frac{\e^{-\delta}}{(1-\delta)^{1-\delta}}\right)^{\frac{\gamma_m^d M}{R(n)}}&\leq\frac{1}{M^r}.
\end{align*}
Therefore, it follows that $\P\left(\mu_{\min}\left(\vec A^*\vec A\right)\leq \frac{M\,\gamma_m^d}{2} \right)\leq \tfrac{1}{M^r}$.
Hence, $\vec A$ has singular values at least $\sqrt{\frac{ 2\,\gamma_m^d}{M}}$ with high probability. This yields an upper bound for 
the norm of the Moore-Penrose-inverse $\norm{\vec A^+}_2=\norm{(\vec A^*\vec A)^{-1}\vec A^*}_2$ by using Proposition 3.1 in \cite{KaUlVo19}, i.e. \eqref{eq:norm_MP} follows.
\end{proof}
It remains to estimate the number of samples $M$, such that \eqref{eq:RN_leq} is fulfilled. In \eqref{eq:RN_bound} we 
estimated the complexity of the spectral function $R(n)$, hence we have to require $N\leq \frac{c_{m,d}\, M}{c_{\psi}^d\,(r+1)\,\log M}$, 
which yields that
\begin{equation}\label{eq:bound_M}
M \geq \frac{c_{\psi}^d\,(r+1)}{c_{m,d}}\, N\log N,
\end{equation}
where $c_{m,d}$ is the constant from \eqref{eq:cm}. We receive an estimation of the number of samples $M$ in terms of the number of parameters $N$. 
In order to recover individual functions, we get a bound of the individual error 
$\norm{f-S_n^\X f}_{L_2(\T^d)}$ with high probability.

\begin{theorem}\label{thm:one_f_new}
Let $M$ be the number of samples satisfying \eqref{eq:bound_M}, $(\vec x_i)_{i=1}^M$ drawn i.i.d. and uniformly at random, $r>1$ and $f \in C(\T^d)$ a continuous function. Then 
\begin{equation*}
\P\left(\norm{f-S_n^\X f}_{L_2(\T^d)}^2 \leq e_2^2 + \tfrac{2}{\gamma_m^d}\,\left(e_2^2+e_2e_{\infty}\sqrt{\tfrac{r\,\log M}{M}}+ e_{\infty}^2\tfrac{r\,\log M}{M}\right)\right)
\geq 1- 2\,M^{-r},
\end{equation*}
where we define $e_2:=\norm{f-P_nf}_{L_2(\T^d)}$ and $e_\infty:=\norm{f-P_nf}_{L_\infty(\T^d)}$.
That means, the $L_2(\T^d)$-error of our approximation can be bounded with high probability by rates of the $L_2(\T^d)$-and the $L_\infty(\T^d)$-error of the projection $P_n$.
\end{theorem}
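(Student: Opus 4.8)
The plan is to split the error into a deterministic projection part and a stochastic least-squares part, and to control the latter by combining the matrix Chernoff bound from Theorem~\ref{thm:norm_MP} with the Bernstein inequality from Theorem~\ref{thm:bernstein}. First I would exploit that $S_n^\X f$ and $P_nf$ both lie in the wavelet space $V_n^\per(\T^d)$ while $f-P_nf$ is $L_2$-orthogonal to it. This gives the Pythagorean identity
\begin{equation*}
\norm{f-S_n^\X f}_{L_2(\T^d)}^2 = \norm{f-P_nf}_{L_2(\T^d)}^2 + \norm{P_nf-S_n^\X f}_{L_2(\T^d)}^2 = e_2^2 + \norm{P_nf-S_n^\X f}_{L_2(\T^d)}^2,
\end{equation*}
which already isolates the leading $e_2^2$ term and reduces everything to estimating the second summand.

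Next I would pass from this $L_2$-norm to the Euclidean norm of the coefficient difference. Writing $\vec b$ for the coefficient vector of $P_nf$ and $\vec a=\vec A^+\vec y$ for that of $S_n^\X f$, the Gram structure of $\vec\Lambda$ gives $\norm{P_nf-S_n^\X f}_{L_2(\T^d)}^2 = (\vec a-\vec b)^*\vec\Lambda(\vec a-\vec b)\leq \mu_{\max}(\vec\Lambda)\,\norm{\vec a-\vec b}_2^2$, and since $\delta_m\leq 1$ Lemma~\ref{lem:bound_eigenvalues} yields $\mu_{\max}(\vec\Lambda)\leq\max\{1,\delta_m^d\}=1$. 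The crucial algebraic observation is that, on the event where $\vec A$ has full column rank, $\vec A^+\vec A=I$, so
\begin{equation*}
\vec a-\vec b = \vec A^+\vec y - \vec A^+\vec A\,\vec b = \vec A^+\big((f-P_nf)(\vec x)\big)_{\vec x\in\X},
\end{equation*}
i.e. the coefficient error is the pseudo-inverse applied to the samples of $g:=f-P_nf$. Consequently $\norm{\vec a-\vec b}_2^2\leq\norm{\vec A^+}_2^2\sum_{\vec x\in\X}|g(\vec x)|^2$, and Theorem~\ref{thm:norm_MP} bounds $\norm{\vec A^+}_2^2\leq 2/(M\gamma_m^d)$ on an event of probability at least $1-M^{-r}$.

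It then remains to control the empirical quantity $\tfrac1M\sum_{\vec x\in\X}|g(\vec x)|^2$, whose expectation is exactly $\norm{g}_{L_2(\T^d)}^2=e_2^2$. This is where Theorem~\ref{thm:bernstein} enters: applying it to the centred variables $\xi_i=|g(\vec x_i)|^2-e_2^2$, which satisfy $\E\,\xi_i=0$, $\norm{\xi_i}_\infty\leq e_\infty^2=:B$ and $\E\,\xi_i^2\leq\norm{g}_\infty^2\,\E|g(\vec x_i)|^2=e_\infty^2 e_2^2=:\sigma^2$, with the choice $\tau=r\log M$ yields
\begin{equation*}
\tfrac1M\sum_{\vec x\in\X}|g(\vec x)|^2 \leq e_2^2 + e_2e_\infty\sqrt{\tfrac{2r\log M}{M}} + \tfrac{2e_\infty^2 r\log M}{3M}
\end{equation*}
on an event of probability at least $1-M^{-r}$. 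Here $g$ is a bounded (indeed continuous) random sample since $f\in C(\T^d)$ and $\norm{g}_\infty\leq e_\infty$.

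Finally I would combine the two estimates by a union bound: the complementary events each have probability at most $M^{-r}$, hence the intersection of the two good events has probability at least $1-2M^{-r}$, and on it the full-rank identity $\vec A^+\vec A=I$ is valid, so all of the above holds simultaneously. Plugging the empirical bound into $\tfrac{2}{M\gamma_m^d}\cdot M\cdot(\cdots)$ produces the claimed estimate, with Bernstein delivering precisely the three stochastic terms $e_2^2$, $e_2e_\infty\sqrt{r\log M/M}$ and $e_\infty^2 r\log M/M$ up to the absolute constants $\sqrt2$ and $2/3$. I expect the main obstacle to be the probabilistic coupling: one must choose the sharp variance proxy $\sigma^2=e_\infty^2 e_2^2$ (rather than the cruder $e_\infty^4$) so that the cross term scales like $e_2e_\infty$ instead of $e_\infty^2$, and one must recognise that the full-rank event required for $\vec A^+\vec A=I$ is exactly the Chernoff event of Theorem~\ref{thm:norm_MP}, so that both high-probability statements live on a single $1-2M^{-r}$ event.
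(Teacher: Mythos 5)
Your proposal is correct and follows essentially the same route as the paper's proof: the same Pythagorean split into $e_2^2$ plus the in-space error, the same identification of $P_nf - S_n^\X f$ as the least-squares operator applied to the samples of $f-P_nf$ (you state it at the coefficient level via $\vec A^+\vec A=I$, the paper at the operator level), the same use of Theorem~\ref{thm:norm_MP} for $\norm{\vec A^+}_2$, the same Bernstein application with $\sigma^2=e_2^2e_\infty^2$, $B=e_\infty^2$, $\tau=r\log M$, and the same union bound giving $1-2M^{-r}$. If anything, you are slightly more careful than the paper in spelling out the full-rank event needed for $\vec A^+\vec A=I$ and the bound $\mu_{\max}(\vec\Lambda)\le 1$ (via $\delta_m\le 1$ and Lemma~\ref{lem:bound_eigenvalues}) when passing between coefficient norms and $L_2(\T^d)$-norms.
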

\begin{proof}
Using the orthogonality of $\psi_{\vec j,\vec k}^\per$ and $\psi_{\vec i,\vec \ell}^\per$ for $\vec j\neq \vec i$ we have 
\begin{align}\label{eq:22}
\norm{f-S_n^\X f}_{L_2(\T^d)}^2
&=e_2^2+\norm{P_n f-S_n^\X f}_{L_2(\T^d)}^2
=e_2^2+\norm{S_n^\X (P_n f- f)}_{L_2(\T^d)}^2\notag\\
&\leq e_2^2+\norm{S_n^\X}_2^2 \norm{P_n f- f}_{\ell_2(\X)}^2.
\end{align}

We apply Theorem~\ref{thm:norm_MP} to bound the operator norm $\norm{S_n^\X}_2$.
For the $\ell_2$-norm we give a bound with high probability by using Bernstein inequality. Therefore we introduce the random variables 
$$\xi_i = |f(\vec x_i)-P_nf(\vec x_i)|^2- e_2^2= \eta_i-\E(\eta_i),$$
where $\eta_i =|f(\vec x_i)-P_nf(\vec x_i)|^2$. These random variables are centered, i.e. $\E(\xi_i)=0$. The variances of these random 
variables can be bounded by
\begin{align*}
\E(\xi_i^2) &= \E(\eta_i^2)- \E(\eta_i)^2 = \int |f(\vec x_i)-P_nf(\vec x_i)|^4\d \P - \left(\int |f(\vec x_i)-P_nf(\vec x_i)|^2\d \P\right)^2\\
&\leq \norm{f-P_nf}_{L_\infty(\T^d)}^2\int |f(\vec x_i)-P_nf(\vec x_i)|^2\d \P- \left(\int |f(\vec x_i)-P_nf(\vec x_i)|^2\d \P\right)^2\\
&\leq \left(\norm{f-P_nf}_{L_\infty(\T^d)}^2- \int |f(\vec x_i)-P_nf(\vec x_i)|^2\d \P\right) \norm{f-P_nf}_{L_2(\T^d)}^2\\
&\leq \norm{f-P_nf}_{L_\infty(\T^d)}^2\norm{f-P_nf}_{L_2(\T^d)}^2=e_2^2\,e_\infty^2.
\end{align*}
Furthermore, we have
\begin{align*}
\norm{\xi_i}_\infty = \norm{\eta_i-\E_\P(\eta_i)}_\infty \leq \sup_{\vec x\in \T^d}\left||f(\vec x)-P_nf(\vec x)|^2-\norm{f-P_nf}^2_{L_2(\T^d)}\right|\leq e_\infty^2.
\end{align*} 
For the last estimation we used that for positive $y_1,y_2$ it holds $|y_1-y_2|\leq \max\{y_1,y_2\}$.

Now we are in the position to merge all inequalities in order to apply Bernstein’s inequality from Theorem~\ref{thm:bernstein}. Using $\tau = r\log M$, this yields
\begin{align*}
\P\left(\tfrac 1M \sum_{i=1}^M\xi_i \geq \sqrt{\tfrac{2\, e_\infty^2e_2^2 r\log M}{M} } + \tfrac{2\,e_\infty^2 r \log M}{3M}\right)&\leq M^{-r}.
\end{align*}  
Because of our choice for the random variables $\xi_i$, we have that 
$$\sum_{i=1}^M\xi_i +e_2^2=\sum_{i=1}^M\eta_i=\norm{f(\vec x_i)-P_nf(\vec x_i)}_{\ell_2(\X)}^2.$$
Hence, we add the mean $e_2^2$ and get
\begin{equation}\label{eq:23}
\P\left(\norm{P_n f- f}_{\ell_2(\X)}^2\leq M\left(e_2^2 + \sqrt{\tfrac{2\, e_\infty^2e_2^2 r\log M}{M} } + \tfrac{2\,e_\infty^2 r \log M}{3M} \right) \right)\geq 1-M^{-r}.
\end{equation}
The terms in~\eqref{eq:22} are bounded with high probability. Let us define the events
\begin{align*}
A&:=\left\{\X\left\vert \norm{f(\vec x_i)-P_nf(\vec x_i)}_{\ell_2(\X)}^2\leq M\left(e_2^2 + \sqrt{\tfrac{2\, e_\infty^2e_2^2 r\log M}{M} } + \tfrac{2\,e_{\infty}^2 r \log M}{3M}\right)\right.\right\},\\
B&:=\left\{\X\left\vert  \norm{S_n^\X}_2\leq \sqrt{\tfrac{2}{M\gamma_m^d}}\right.\right\}.
\end{align*}
Due to \eqref{eq:23} and Theorem~\ref{thm:norm_MP} we know that 
\begin{align*}
\P(A)>1-M^{-r} \text{ and } \P(B)>1-M^{-r}, 
\end{align*}
which implies that 
$$ \P(A\cap B )\geq 1- \P(A^c)-\P(B^c)\geq 1-2M^{-r}.$$
This gives us the assertion.
\end{proof}

In a similar way we give an estimation for the $L_\infty$-error.
\begin{theorem}\label{thm:one_f_new_infty}
Let $M$ be the number of samples satisfying~\eqref{eq:bound_M}, $(\vec x_i)_{i=1}^M$ drawn i.i.d. and uniformly at random, $r>1$ and $f \in C(\T^d)$ a continuous function. Then 
\begin{align*}
\P\left(\norm{f-S_n^\X f}_{L_\infty(\T^d)} \leq e_\infty +      \left(\tfrac{2 c_{m,d}\delta_m^d}{(r+1)\,\gamma_m^d}\right)^{1/2} \left(e_2^2 \tfrac{M}{\log M} + e_2e_\infty \sqrt{\tfrac{r\, M}{\log M}} + r\,e_\infty^2 \right)^{1/2}\right)
&\geq 1- 2\,M^{-r},
\end{align*}
where we define as in the previous theorem $e_2:=\norm{f-P_nf}_{L_2(\T^d)}$ and $e_\infty:=\norm{f-P_nf}_{L_\infty(\T^d)}$.
\end{theorem}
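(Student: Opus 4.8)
The plan is to mirror the structure of the proof of Theorem~\ref{thm:one_f_new}, replacing the $L_2$-amplification of the coefficient error by the $L_\infty$-amplification governed by the spectral function $R(n)$ from~\eqref{eq:R(n)}. First I would split off the projection error by the triangle inequality,
$$\norm{f-S_n^\X f}_{L_\infty(\T^d)}\le \norm{f-P_nf}_{L_\infty(\T^d)}+\norm{P_nf-S_n^\X f}_{L_\infty(\T^d)}=e_\infty+\norm{P_nf-S_n^\X f}_{L_\infty(\T^d)},$$
and observe, exactly as in~\eqref{eq:22}, that since $S_n^\X$ reproduces every element of $V_n^\per(\T^d)$ we have $P_nf-S_n^\X f=S_n^\X(P_nf-f)$, which again lies in $V_n^\per(\T^d)$.

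Next I would estimate the $L_\infty$-norm of an arbitrary $g=\sum_{\vec j\in\J_n}\sum_{\vec k\in\I_\vec j}b_{\vec j,\vec k}\psi_{\vec j,\vec k}^\per\in V_n^\per(\T^d)$ in terms of its coefficient vector $\vec b$. A pointwise Cauchy--Schwarz estimate gives
$$\norm{g}_{L_\infty(\T^d)}=\sup_{\vec x\in\T^d}\Big|\sum_{\vec j\in\J_n}\sum_{\vec k\in\I_\vec j}b_{\vec j,\vec k}\psi_{\vec j,\vec k}^\per(\vec x)\Big|\le \Big(\sup_{\vec x\in\T^d}\sum_{\vec j\in\J_n}\sum_{\vec k\in\I_\vec j}|\psi_{\vec j,\vec k}^\per(\vec x)|^2\Big)^{1/2}\norm{\vec b}_2=R(n)^{1/2}\norm{\vec b}_2.$$
Applying this to $g=S_n^\X(P_nf-f)$, whose coefficient vector is $\vec b=\vec A^+\big((P_nf-f)(\vec x_i)\big)_{i=1}^M$, and using $\norm{\vec b}_2\le\norm{\vec A^+}_2\,\norm{P_nf-f}_{\ell_2(\X)}$ together with the Riesz bounds~\eqref{eq:Riesz} to pass between the coefficient norm and the $L_2$-norm of $g$, reduces everything to the two quantities already controlled in the $L_2$ analysis: the operator norm $\norm{\vec A^+}_2$ and the discrete residual $\norm{P_nf-f}_{\ell_2(\X)}$.

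The final step is to invoke the same two high-probability events as in Theorem~\ref{thm:one_f_new}. On the event where Theorem~\ref{thm:norm_MP} applies we have $\norm{\vec A^+}_2\le\sqrt{2/(M\gamma_m^d)}$ by~\eqref{eq:norm_MP}, and the hypothesis~\eqref{eq:bound_M} guarantees the bound~\eqref{eq:RN_leq} on $R(n)$; on the event coming from Bernstein's inequality we have the residual bound~\eqref{eq:23} for $\norm{P_nf-f}_{\ell_2(\X)}^2$. Multiplying these three estimates, substituting $c_{m,d}$ from~\eqref{eq:cm}, and collecting the Riesz constants $\gamma_m^d$ and $\delta_m^d$ yields the claimed bound, while the union bound $\P(A\cap B)\ge 1-2M^{-r}$ gives the stated probability.

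The main obstacle, compared with the $L_2$ case, is the extra factor $R(n)^{1/2}$: whereas in Theorem~\ref{thm:one_f_new} the coefficient error feeds directly into the $L_2$-norm, here it is amplified by the spectral function, so the whole argument hinges on the fact that the sampling condition~\eqref{eq:bound_M} forces $R(n)$ to be no larger than of order $M/\log M$. Keeping careful track of the interplay between this $R(n)$-amplification, the lower Riesz constant $\gamma_m^d$ hidden in $\norm{\vec A^+}_2$, and the upper Riesz constant $\delta_m^d$ that appears when converting between the discrete coefficient norm and the function norm of $g$ is the delicate bookkeeping; the probabilistic input is otherwise identical to the $L_2$ estimate.
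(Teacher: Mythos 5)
Your proposal is correct and follows essentially the same route as the paper: triangle inequality to split off $e_\infty$, a pointwise Cauchy--Schwarz estimate of the remainder $P_nf-S_n^\X f\in V_n^\per(\T^d)$ against the spectral function $R(n)$, and then reuse of exactly the two high-probability events from the $L_2$ case (the bound on $\norm{\vec A^+}_2$ from Theorem~\ref{thm:norm_MP} together with~\eqref{eq:RN_leq}, and the Bernstein bound~\eqref{eq:23} for the discrete residual), combined by a union bound to get $1-2M^{-r}$. The only cosmetic difference is that you feed the coefficient vector $\vec b=\vec A^+\big((P_nf-f)(\vec x_i)\big)_i$ directly into the $R(n)^{1/2}$ estimate, whereas the paper passes through $\norm{P_nf-S_n^\X f}_{L_2(\T^d)}$; this is only bookkeeping of the Riesz constants.
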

\begin{proof}
This proof is similar to the proof of the previous theorem. Triangle inequality gives
\begin{equation*}
\norm{f-S_n^\X f}_{L_\infty(\T^d)}
=e_\infty+\norm{P_n f-S_n^\X f}_{L_\infty(\T^d)}.
\end{equation*}
We denote the function $g =P_n f-S_n^\X f  = \sum_{\vec j\in \J_n}\sum_{\vec k\in \I_{\vec j} }\langle g, \psi_{\vec j,\vec k}^{\per,*}\rangle  \psi_{\vec j,\vec k}^{\per}$, which gives 
\begin{align*}
|g(\vec x)| &\leq \left(\sum_{\vec j\in \J_n}\sum_{\vec k\in \I_{\vec j}} |\langle g, \psi_{\vec j,\vec k}^{\per,*}\rangle|^2 \right)^{1/2} \left(\sum_{\vec j\in \J_n}\sum_{\vec k\in \I_{\vec j}} |\psi_{\vec j,\vec k}(\vec x)|^2\right)^{1/2}\\
&\leq \delta_m^{d/2}\norm{P_nf -S_n^\X f}_{L_2(\T^d)} \sqrt{R(n)}.
\end{align*}
The condition~\eqref{eq:RN_leq} gives a bound for $R(n)$ and the estimation of $\norm{P_nf -S_n^\X f}_{L_2(\T^d)} $ follows the same lines as the proof of Theorem~\ref{thm:one_f_new}.
\end{proof}
We use the general Theorem~\ref{thm:one_f_new} to give a bound for the approximation error with high probability 
for our settings, where $s = m$ and where $s<m$, using our estimates for the errors of $\norm{f-P_nf}$ for both cases.

\begin{corollary}\label{cor:one_f}
Let the assumptions be like in Theorem~\ref{thm:one_f_new}. Let $m$ be the order of vanishing moments of the wavelets, the 
number of samples satisfying~\eqref{eq:bound_M}, and $\gamma_m, \delta_m$ the Riesz constants from~\eqref{eq:Riesz}. In the case where $1/2<s<m$ we have
\begin{equation}
\P\left(\norm{f-S_n^\X f}_{L_2(\T^d)}^2 \lesssim  (1+\tfrac{2}{\gamma_m^d}(r+\sqrt r+1))\, 2^{-2ns}n^{d-1}\norm{f}^2_{\bB^{s}_{2,\infty}(\T^d)}\right)\geq 1- 2\,M^{-r},
\end{equation}
and in the case where $s = m$ we have
\begin{equation}
\P\left(\norm{f-S_n^\X f}_{L_2(\T^d)}^2 \lesssim  (1+\tfrac{2}{\gamma_m^d}(r+\sqrt r+1)) \,2^{-2nm}n^{d-1}\norm{f}^2_{H^m_{\mix}(\T^d)}\right)\geq 1- 2\,M^{-r}.
\end{equation}

\end{corollary}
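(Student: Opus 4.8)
The plan is to feed the deterministic projection estimates into the probabilistic bound of Theorem~\ref{thm:one_f_new}. That theorem already isolates the two quantities that matter, namely $e_2=\norm{f-P_nf}_{L_2(\T^d)}$ and $e_\infty=\norm{f-P_nf}_{L_\infty(\T^d)}$, and states that with probability at least $1-2M^{-r}$ one has
\[
\norm{f-S_n^\X f}_{L_2(\T^d)}^2 \leq e_2^2 + \tfrac{2}{\gamma_m^d}\Big(e_2^2+e_2e_\infty\sqrt{\tfrac{r\log M}{M}}+e_\infty^2\tfrac{r\log M}{M}\Big).
\]
So the entire argument reduces to inserting the correct rates for $e_2$ and $e_\infty$ and checking that the sampling factor $\tfrac{\log M}{M}$ absorbs the excess powers of $2^n$ hidden in the $L_\infty$-term.

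First I would record the projection rates separately in the two regimes. For $1/2<s<m$, Remark~\ref{rem_besov_L2} gives $e_2\lesssim 2^{-sn}\norm{f}_{\bB^s_{2,\infty}(\T^d)}$, and the corresponding bound in Theorem~\ref{thm:L_infty} gives $e_\infty\lesssim 2^{-n(s-1/2)}n^{d-1}\norm{f}_{\bB^s_{2,\infty}(\T^d)}$. For $s=m$, Corollary~\ref{cor:error_m=s} yields $e_2\lesssim 2^{-mn}n^{(d-1)/2}\norm{f}_{H^m_{\mix}(\T^d)}$, while the $s=m$ case of Theorem~\ref{thm:L_infty} gives $e_\infty\lesssim 2^{-n(m-1/2)}n^{d-1}\norm{f}_{H^m_{\mix}(\T^d)}$.

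Next I would translate the sampling hypothesis into a decay rate for $\tfrac{\log M}{M}$. Since $t\mapsto \tfrac{\log t}{t}$ is decreasing, the lower bound \eqref{eq:bound_M}, $M\gtrsim N\log N$, together with Lemma~\ref{lem:N}, $N\asymp 2^n n^{d-1}$, gives $\tfrac{\log M}{M}\lesssim \tfrac{1}{N}\lesssim 2^{-n}n^{-(d-1)}$, and hence $\sqrt{\tfrac{r\log M}{M}}\lesssim \sqrt r\,2^{-n/2}n^{-(d-1)/2}$. This is the arithmetic heart of the corollary: the factor $2^{-n}n^{-(d-1)}$ is exactly what is needed to upgrade the $L_\infty$-rate, which decays only like $2^{-n(s-1/2)}$, to the $L_2$-rate $2^{-2ns}$.

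Finally I would combine the four summands. In the case $s<m$ the two $e_2^2$ contributions are $\lesssim 2^{-2ns}\norm{f}^2_{\bB^s_{2,\infty}(\T^d)}\leq 2^{-2ns}n^{d-1}\norm{f}^2_{\bB^s_{2,\infty}(\T^d)}$; the mixed term contributes $e_2e_\infty\sqrt{\tfrac{r\log M}{M}}\lesssim \sqrt r\,2^{-2ns}n^{(d-1)/2}\norm{f}^2_{\bB^s_{2,\infty}(\T^d)}$; and the last term contributes $e_\infty^2\tfrac{r\log M}{M}\lesssim r\,2^{-2ns}n^{d-1}\norm{f}^2_{\bB^s_{2,\infty}(\T^d)}$, where in each case the powers of $2^n$ cancel precisely and the residual powers of $n$ are at most $n^{d-1}$. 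Collecting the prefactors $1$, $\sqrt r$ and $r$ against $\tfrac{2}{\gamma_m^d}$, together with the leading standalone $e_2^2$, yields exactly $1+\tfrac{2}{\gamma_m^d}(r+\sqrt r+1)$; the case $s=m$ is identical after replacing the $\bB^s_{2,\infty}$-norm by the $H^m_{\mix}$-norm and using the slightly weaker $e_2$-rate, whose extra $n^{(d-1)/2}$ is harmless. The one place that requires genuine care --- the main obstacle --- is the exponent bookkeeping in the mixed and $e_\infty^2$ terms, where one must verify that the sampling gain $2^{-n}n^{-(d-1)}$ compensates the slower $L_\infty$-decay exactly; everything else is term-by-term dominance by $n^{d-1}$.
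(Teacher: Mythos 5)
Your proposal is correct and takes essentially the same route as the paper: plug the projection rates (Corollary~\ref{cor:error_bound}/Remark~\ref{rem_besov_L2}, Corollary~\ref{cor:error_m=s}, Theorem~\ref{thm:L_infty}) and the sampling relation $\tfrac{\log M}{M}\lesssim 2^{-n}n^{-(d-1)}$ from \eqref{eq:bound_M} into Theorem~\ref{thm:one_f_new}, and check that each of the four terms is dominated by $2^{-2ns}n^{d-1}$ with prefactors $1,\sqrt r,r$. One small correction: for $f\in \bB^s_{2,\infty}(\T^d)$ the correct projection rate is $e_2\lesssim 2^{-sn}n^{(d-1)/2}\norm{f}_{\bB^s_{2,\infty}(\T^d)}$, not $2^{-sn}$ — the Besov characterization in Theorem~\ref{thm:norm-psi*} is only of supremum type, so Remark~\ref{rem_besov_L2} refers to the argument of Corollary~\ref{cor:error_m=s}, in which summing the tail $\sum_{|\vec j|_1>n}2^{-2|\vec j|_1 s}$ produces the extra $n^{(d-1)/2}$; this is harmless for your conclusion, since with this weaker rate every term (including the mixed one, which then comes out as $\sqrt r\,2^{-2ns}n^{d-1}$ rather than your $\sqrt r\,2^{-2ns}n^{(d-1)/2}$) is still bounded by $2^{-2ns}n^{d-1}$ times the appropriate norm squared.
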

\begin{proof}
In order to apply the previous theorem, let us collect bounds for the occurring terms. We have for the number of samples 
$\frac{\log M}{M} \lesssim 2^{-n} n^{-d+1}$, see~\eqref{eq:bound_M}. The $L_2$-error is bounded in Corollary~\ref{cor:error_bound} respectively 
Corollary~\ref{cor:error_m=s} and the subsequent remark.
Theorem~\ref{thm:L_infty} gives us a bound for the ${L_\infty}$-error.
Hence, we have in the case $s<m$,
\begin{align*}
\norm{f-P_nf}_{L_\infty(\T^d)}\norm{f-P_nf}_{L_2(\T^d)}\sqrt{\tfrac{r\,\log M}{M}}&\lesssim 
\sqrt{r}\, 2^{-2ns}n^{d-1}\norm{f}^2_{\bB^s_{2}(\T^d)}, \\
\norm{f-P_nf}^2_{L_\infty(\T^d)} \tfrac{r\,\log M}{M}&\lesssim 
r\, 2^{-2ns}n^{d-1}\norm{f}^2_{\bB^s_{2,\infty}(\T^d)},
\end{align*}
and for $s=m$ 
\begin{align*}
\norm{f-P_nf}_{L_\infty(\T^d)}\norm{f-P_nf}_{L_2(\T^d)}\sqrt{\tfrac{r\,\log M}{M}}&\lesssim \sqrt{r}\,2^{-2nm} n^{d-1}\norm{f}^2_{H^m_{\mix}(\T^d)}, \\
\norm{f-P_nf}^2_{L_\infty(\T^d)} \tfrac{r\,\log M}{M}&\lesssim r\, 2^{-2nm} n^{d-1}\norm{f}^2_{H^m_{\mix}(\T^d)}.
\end{align*} 
Theorem~\ref{thm:one_f_new} gives the assertion.                                     
\end{proof}

\begin{remark}
Note, that this theorem establishes a bound for the error of the least squares approximation with high probability , which has the same 
rate like the best approximation with the projection operator $P_n$ in Corollarys~\ref{cor:error_bound} and \ref{cor:error_m=s}. 
The projection operator $P_n$ is the optimal approximation in the wavelet spaces. Hence, with high probability we also get 
this optimal rate using the operator $S_n^\X$. 
Furthermore, also the $L_\infty$-error of $S_n^\X$ allows such an optimal bound by applying Theorem~\ref{thm:one_f_new_infty}. 
Note, in this case choosing the sampling number $M$ according to~\eqref{eq:RN_leq}, gives that $$
    \frac{\log M}{M}\sup\limits_{\|f\|\leq 1}\|f-P_nf\|^2_\infty  \asymp
    \frac{1}{N}\sup\limits_{\|f\|\leq 1}\|f-P_nf\|^2_\infty  \asymp
    \sup\limits_{\|f\|\leq 1}\|f-P_nf\|^2_2\,.
$$
\end{remark}

All the theoretical considerations in this section result in Algorithm~\ref{alg:1}, which determines the approximant~\eqref{eq:def_S_n^X} 
from given samples $\X$ by solving a least squares algorithm with the hyperbolic wavelet matrix $\vec A$.

\begin{algorithm}[ht]
\caption{Hyperbolic wavelet regression}
	\vspace{2mm}
	\begin{tabular}{ l l l }
		\textbf{Input:} & $n \in \N$ & maximal level \\
		&$M\in\N\text{ with }    \tfrac{M}{\log M} \gtrsim 2^n n^{d-1}$& number of samples\\
		&		$\X = (x_i)_{i=1}^M\in \T^d$ & sampling nodes \\
		& $\vec y = (f(x_i))_{i=1}^M$ & function values at sampling nodes 
	\end{tabular}
	\begin{algorithmic}[1]
			\STATE{Construct the sparse hyperbolic wavelet matrix 
			$$\vec A=(\psi_{\vec j,\vec k}^\per(\vec x))_{\vec x\in \X,\stackrel{\vec j\in \J_n}{\vec k\in \I_\vec j}}\in \C^{M\times N},$$
			where the number $N$ of parameters is 
			$N := \text{dim} \lin \{\psi_{\vec j,\vec k}^\per \mid \vec j\in \J_n,\vec k\in \I_j\}$}
	    \STATE{Minimize the residual 
			$\norm{\vec A\vec a-\vec y}$
			via an LSQR-algorithm.
			}
			
	\end{algorithmic}
	\begin{tabular}{ l l l }
		\textbf{Output:} 
		 &  $\left(a_{\vec j,\vec k}\right)_{\vec j,\vec k}\in \C^N$ &coefficients of the approximant $S_n^\X f =\sum_{\vec j\in \J_n}\sum_{\vec k\in \I_j}a_{\vec j,\vec k}\psi_{\vec j,\vec k}^\per$
	\end{tabular}
	\label{alg:1}

\end{algorithm}

\subsubsection*{Comparison to other work}
Error estimates with piecewise linear wavelet functions, i.e. the case $m=2$ are considered in \cite{Bo17}, where the wavelets are called ``prewavelets'' and the non-periodic setting is treated. To compare, in \cite[p. 117]{Bo17} 
an approximation rate $2^{-ns}n^{(d-1)/2}$ was proven for $s\leq m$ in case of $H^s_{\mix}(\T^d)$-functions. This is due to the sub-optimal analysis of the projection operator. We obtain the same bound for the larger space $\bB^s_{2,\infty}(\T^d)$ additionally with high probability. In the case $s=m$ our results match the results in \cite{Bo17}. Furthermore, we consider quasi-optimal $L_\infty(\T^d)$-error bounds with high probability.  

In~\cite{CoMi17} the authors also studied the approximation error of a least squares operator like $S_n^\X$. But they used an 
orthonormal system of basis functions and bounded expectation of the approximation error $\norm{f-S_n^\X f}_{L_2(\T^d)}$. A recent improvement was done in~\cite{CoDo21}.
In contrast to that, we give in Corollary~\ref{cor:one_f} a concentration inequality for the approximation error based on the probabilistic Bernstein inequality. 

In~\cite{KaUlVo19} also hyperbolic wavelet regression was considered. In contrast to our work they studied the worst-case setting for the whole function class.

 \section{Computing the ANOVA decomposition}\label{sec:wav_anova}
The hyperbolic wavelet regression in Algorithm~\ref{alg:1} already reduces the curse of dimensionality because of the 
hyperbolic structure of our index set. This is reflected in the number of parameters of the wavelet spaces $N=\O(2^nn^d)$.
But we still have the dimension $d$ in the exponent, which grows fast for high dimensions $d$. Therefore we want to 
reduce the number of necessary parameters further by taking into account which variable interactions play a role for describing the function. 

In this section we calculate the global sensitivity indices $\rho(\vec u,S_n^\X f)$ defined in~\eqref{eq:def_gsi} for the approximated functions 
$S_n^\X f$, which we introduced in the last section. Knowing these indices $\rho(\vec u,S_n^\X f)$, we can reduce the number of parameters by 
omitting the ANOVA terms which do not play a role in describing the variance of a function $f\in L_2(\T^d)$.
Solving the least-squares problem
$$\min_{\vec a\in \C^N}\norm{\vec A \vec a-\vec y}_2,$$
where the hyperbolic wavelet matrix $\vec A\in \R^{M\times N}$ has the form 
$\vec A=(\psi_{\vec j,\vec k}^\per(\vec x))_{\vec x\in \X,\stackrel{\vec j\in \J_n}{\vec k\in \I_\vec j}}$ ,
leads to a coefficient vector $\vec a=(a_{\vec j,\vec k})_{\vec j\in \J_n,\vec k\in \I_\vec j}$, which describes the approximant by
$$g=S_n^\X f=\sum_{\vec j\in \J_n}\sum_{\vec k\in \I_{\vec j}}a_{\vec j,\vec k}\psi_{\vec j,\vec k}^\per.$$
We calculate the global sensitivity indices $\rho(\vec u,g)$
in terms of the coefficient vector $\vec a$.
\begin{theorem}\label{thm:anova_psi}
Let $g\in L_2(\T^d)$ be a function in the periodic wavelet space written as 
\begin{equation}\label{eq:g=apsi} 
g(\vec x)= \sum_{\vec j\geq -\vec 1}\sum_{\vec k\in \I_\vec j} a_{\vec j,\vec k} \psi_{\vec j,\vec k}^\per(\vec x).
\end{equation}
For a function $f\in H^s_\mix(\T^d)$ let $g$ be the projection $g = P_nf$ defined in~\eqref{eq:defPn} 
or the approximation $g = S_n^\X f$ from~\eqref{eq:def_S_n^X}. In these cases the sum for the index $\vec j$ reduces to $\vec j\in \J_n$.
Furthermore, let $g$ be decomposed in ANOVA terms $g_{\vec u}$ as in Definition~\ref{def:anova-terms}. For these terms yields
\begin{equation*}
g_{\vec u}(\vec x_{\vec u})=\sum_{\vec j_\vec u\geq\vec 0}\sum_{\vec k\in \I_{\uinv{j}{u}}}a_{\uinv{j}{u},\vec k} \,\psi^\per_{\vec j_\vec u,\vec k_\vec u}(\vec x_\vec u),
\end{equation*}
where we define the notion for up-sampling $\uparrow\colon \R^{|\vec u|}\to \R^d$, given by
$$(\uinv{j}{u})_i=\begin{cases}j_i&\text{ if } i\in \vec u,\\ -1 &\text{ otherwise.}\end{cases}$$
Then the variances $\sigma^2(g_\vec u)$ of these ANOVA terms for $\varnothing\neq\vec u\subseteq [d]$ are given by
\begin{equation}\label{eq:gsi_of_g}
\sigma^2(g_\vec u)
=\sum_{\vec j_\vec u\geq\vec 0}\vec a_{\uinv{j}{u}}^\top\vec \Lambda_{\uinv{j}{u}}\vec a_{\uinv{j}{u}},
\end{equation}
where the matrices $\vec \Lambda_{\vec j}$ are defined in \eqref{eq:Lambda_j} and we denote the vectors
$\vec a_{\vec j} = \left(a_{\vec j,\vec k}\right)_{\vec k\in \I_{\vec j}}$.
\end{theorem}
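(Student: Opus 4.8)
The plan is to first identify, for each $\vec j\geq -\vec 1$ with active set $\vec u=\{i\in[d]\mid j_i\geq 0\}$, the tensor-product wavelet $\psi_{\vec j,\vec k}^\per$ as an ANOVA term of type $\vec u$, and then to invoke the uniqueness of the ANOVA decomposition to read off $g_{\vec u}$. Grouping the expansion \eqref{eq:g=apsi} according to the active set gives
\begin{equation*}
g=\sum_{\vec u\subseteq[d]}h_{\vec u},\qquad h_{\vec u}(\vec x_{\vec u}):=\sum_{\vec j_{\vec u}\geq\vec 0}\sum_{\vec k\in\I_{\uinv{j}{u}}}a_{\uinv{j}{u},\vec k}\,\psi^\per_{\vec j_{\vec u},\vec k_{\vec u}}(\vec x_{\vec u}),
\end{equation*}
where I use that $\psi^\per_{-1,0}=1_\T$, so every factor in a direction $i\notin\vec u$ is constant and $\psi_{\vec j,\vec k}^\per$ depends only on $\vec x_{\vec u}$.

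The key observation, and the main step, is that $h_{\vec u}$ has exactly the two defining properties of the ANOVA term $g_{\vec u}$. It depends only on $\vec x_{\vec u}$ by construction, and for every $i\in\vec u$ it has vanishing mean in $x_i$: since $j_i\geq 0$, the zeroth vanishing moment \eqref{eq:moments} gives $\int_\T\psi^\per_{j_i,k_i}(x_i)\d x_i=2^{-j_i/2}\int_\R\psi(y)\d y=0$, whence $\int_\T h_{\vec u}\d x_i=0$. Uniqueness of the decomposition then forces $h_{\vec u}=g_{\vec u}$: integrating $g=\sum_{\vec v}h_{\vec v}$ over $\vec x_{\vec u^c}$ annihilates every $h_{\vec v}$ with $\vec v\not\subseteq\vec u$ (such a $\vec v$ meets $\vec u^c$ in some coordinate over which $h_{\vec v}$ averages to zero), leaving $\int_{\T^{|\vec u^c|}}g\,\d\vec x_{\vec u^c}=\sum_{\vec v\subseteq\vec u}h_{\vec v}$; comparing this with Definition~\ref{def:anova-terms} and inducting on $|\vec u|$ (using Lemma~\ref{lem:integral_x_j}) yields $h_{\vec u}=g_{\vec u}$, which is the claimed formula. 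Everything afterwards is a norm computation.

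For the variances I use that $g_{\vec u}$ has mean zero for $\vec u\neq\varnothing$, so $\sigma^2(g_{\vec u})=\norm{g_{\vec u}}_{L_2(\T^{|\vec u|})}^2$. Expanding the square and appealing to the cross-scale orthogonality \eqref{eq:sca_psi_per}—which, taken tensor-wise, forces $\langle\psi^\per_{\vec j_{\vec u},\vec k_{\vec u}},\psi^\per_{\vec j'_{\vec u},\vec k'_{\vec u}}\rangle=0$ whenever $\vec j_{\vec u}\neq\vec j'_{\vec u}$—collapses the double sum over scales to a single sum:
\begin{equation*}
\sigma^2(g_{\vec u})=\sum_{\vec j_{\vec u}\geq\vec 0}\sum_{\vec k,\vec k'\in\I_{\uinv{j}{u}}}a_{\uinv{j}{u},\vec k}\,\overline{a_{\uinv{j}{u},\vec k'}}\,\langle\psi^\per_{\vec j_{\vec u},\vec k_{\vec u}},\psi^\per_{\vec j_{\vec u},\vec k'_{\vec u}}\rangle.
\end{equation*}
The inner Gram matrix has entries $\langle\psi^\per_{\vec j_{\vec u},\vec k_{\vec u}},\psi^\per_{\vec j_{\vec u},\vec k'_{\vec u}}\rangle=\lambda_{\uinv{j}{u},\vec k-\vec k'}$, which is precisely the partial matrix $\vec\Lambda_{\uinv{j}{u}}$ from \eqref{eq:Lambda_j} (the $-1$ directions contribute trivial $1\times 1$ circulant blocks equal to $1$). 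Hence each inner sum equals the quadratic form $\vec a_{\uinv{j}{u}}^\top\vec\Lambda_{\uinv{j}{u}}\vec a_{\uinv{j}{u}}$, giving \eqref{eq:gsi_of_g}. I expect the only delicate point to be the uniqueness argument in the second paragraph; the orthogonality-based variance computation is routine once the first formula is in hand.
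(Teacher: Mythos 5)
Your proof is correct and follows essentially the same route as the paper: both arguments rest on the mean-zero property $\int_\T\psi^\per_{j,k}\,\d x=\delta_{j,-1}$ (a consequence of \eqref{eq:moments}), an induction on $|\vec u|$ against Definition~\ref{def:anova-terms} to identify the ANOVA terms with the wavelet sums grouped by active set, and the cross-scale orthogonality \eqref{eq:sca_psi_per} to collapse the variance into the quadratic forms with the matrices $\vec\Lambda_{\uinv{j}{u}}$. Your reorganization (defining the candidate $h_{\vec u}$ first and then deducing $h_{\vec u}=g_{\vec u}$ by uniqueness) is only a cosmetic variant of the paper's direct inductive computation of $g_{\vec u}$ from the definition.
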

\begin{proof}
We use Definition~\ref{def:anova-terms} to determine the ANOVA terms. We take into account that the wavelets have the property that 
\begin{align*}
\int_{\T} \psi_{j,k}^\per\d x=\delta_{j,-1}.
\end{align*}
We use induction over $|\vec u|$ and begin with
\begin{align*}
g_\varnothing = \int_{\T^d}\sum_{\vec j\geq-\vec 1}\sum_{\vec k\in \I_{\vec j}}a_{\vec j,\vec k}\psi_{\vec j,\vec k}^\per(\vec x)\d \vec x=a_{-\vec 1,\vec 0}.
\end{align*}
The induction step reads
\begin{align*}
g_{\vec u}&=\int_{\T^{d-|\vec u|}}\sum_{\vec j\geq-\vec 1}\sum_{\vec k\in \I_\vec j}a_{\vec j,\vec k}\psi_{\vec j,\vec k}^\per(\vec x)\d \vec x_{\vec u^c}-\sum_{\vec v\subset\vec u}g_\vec v(x_\vec v)\\
&=\sum_{\vec j_\vec u\geq-\vec 1} \sum_{\vec k\in \I_{\uinv{j}{u}}}a_{\uinv{j}{u},\vec k}\psi_{\vec j_\vec u,\vec k_\vec u}^\per(\vec x_\vec u)-\sum_{\vec j_\vec v\geq\vec 0}\sum_{\vec k\in \I_{\uinv{j}{v}}}a_{\uinv{j}{v},\vec k} \psi^\per_{\vec j_\vec v,\vec k_\vec v}(\vec x_\vec v)\\
&=\sum_{\vec j_\vec v\geq\vec 0}\sum_{\vec k\in \I_{\uinv{j}{u}}}a_{\uinv{j}{u},\vec k} \psi^\per_{\vec j_\vec u,\vec k_\vec u}(\vec x_\vec u).
\end{align*}
For determining the variances of these terms, we use the fact that $\langle\psi_{j_1,k_1}^\per,\psi_{j_2,k_2}^\per\rangle =0,\text{ if } j_1\neq j_2$, see~\eqref{eq:sca_psi_per}.
Hence, 
\begin{align*}
\sigma^2(g_\vec u)&=\int_{\T^{|\vec u|}}g_\vec u(\vec x_\vec u)\d x_\vec u=\int_{\T^{|\vec u|}}\left(\sum_{\stackrel{\vec j_\vec u\geq\vec 0}{|\vec j_\vec u|_1\leq n}}\sum_{\vec k\in \I_{\uinv{j}{u}}}a_{\uinv{j}{u},\vec k} \psi^\per_{\vec j_\vec u,\vec k_\vec u}(\vec x_\vec u)\right)^2\d \vec x_\vec u\\
&=\sum_{\vec j_\vec u\geq\vec 0}\int_{\T^{|\vec u|}}\left(\sum_{\vec k\in \I_{\uinv{j}{u}}}a_{\uinv{j}{u},\vec k} \psi^\per_{\vec j_\vec u,\vec k_\vec u}(\vec x_\vec u)\right)^2\d \vec x_\vec u\\
&=\sum_{\vec j_\vec u\geq\vec 0}\sum_{\vec k\in \I_{\uinv{j}{u}}}\sum_{\vec \ell\in \I_{\uinv{j}{u}}}a_{\uinv{j}{u},\vec k_\vec u} a_{\uinv{j}{u},\vec \ell_\vec u}\int_{\T^{|\vec u|}}\psi^\per_{\vec j_\vec u,\vec k_\vec u}(\vec x_\vec u)\psi^\per_{\vec j_\vec u,\vec \ell_\vec u}(\vec x_\vec u)\d \vec x_\vec u\\
&=\sum_{\vec j_\vec u\geq\vec 0}\vec a_{\uinv{j}{u}}^\top\vec \Lambda_{\uinv{j}{u}}\vec a_{\uinv{j}{u}}.\qedhere
\end{align*}
\end{proof}
This theorem tells us that the description of a function $g\in L_2(\T^d)$ in terms of wavelets like in~\eqref{eq:g=apsi} inherits the ANOVA structure of the function, since we have for every subset $\vec u \in \mathcal{P}([d])$,
$$\langle g(\vec x),\psi_{\uinv{j}{u},\vec k}^{\per,*}(\vec x)\rangle=\langle g_\vec u(\vec x_\vec u),\psi_{\vec j_\vec u,\vec k_\vec u}^{\per,*}(\vec x_\vec u)\rangle.$$
Note that there are only few different matrix entries in the matrices $\vec \Lambda_{\vec j}$. 
Hence, these few entries only have to be precomputed and with \eqref{eq:gsi_of_g} the global sensitivity indices $\rho(\vec u,S_n^\X f)$ can be computed in a fast way.

\subsection{Truncating the ANOVA decomposition}\label{sec:truncate_ANOVA}
So far we used in Algorithm~\ref{alg:1} all ANOVA terms to approximate a function $f\in H^s_\mix(\T^d)$. The number of ANOVA terms of a
function is equal to $2^d$  and therefore grows exponentially in the dimension $d$. This reflects the curse of dimensionality 
in a certain way and poses a problem for the approximation of a function, even while we use a wavelet decomposition, 
which decreases the number of used parameters in comparison to using a full grid approximation. For that reason, we want to truncate
the ANOVA decomposition, i.e., removing certain terms $f_{\vec u}$, and creating certain form of sparsity. 

To this end we introduce the notion of effective dimension, see \cite{CaMoOw97}.  
\begin{Definition}\label{def:effective_dim}
For $0<\epsilon_s\leq 1$ the \textit{effective dimension} of $f$, in the \textit{superposition sense}, is the smallest integer $\nu\leq d$, such that 
$$\sum_{|\vec u|\leq \nu}\sigma^2(f_\vec u)\geq \epsilon_{\nu} \sigma^2 (f).$$ 
\end{Definition}
This means, we can describe a function with low effective dimension in the superposition sense by only using a low dimensional approximant very well. 
For that reason, we introduce the set $U_{\nu}$, where we use all ANOVA terms up to the superposition dimension $\nu$, i.e. 
\begin{equation}\label{eq:Unu}
U_{\nu}:=\{\vec u\in[d] \mid |\vec u|\leq \nu\}.
\end{equation}
Furthermore, in~\cite{PoSc19a} was shown that functions of dominating mixed smoothness have low effective dimension. 
There they bounded the truncation error $\norm{\sum_{\vec u\notin U_{\nu}}f_{\vec u}}_{L_2(\T^d)},$
which we accept in the following by supposing that a function has only dimension interactions up to order $\nu$. \\

To gain from a low effective dimension, we introduce the following ANOVA inspired Sobolev spaces of dominating mixed derivatives with superposition dimension $\nu$
\begin{align}
H^{s,\nu}_{\mix}(\T^d)&=\{f\in H_{\mix}^s(\T^d)\mid f_{\vec u}=0 \text{ for all } \vec u\notin U_{\nu} \},\label{eq:Hsnu}\\
H^{s,U}_{\mix}(\T^d)&=\{f\in H_{\mix}^s(\T^d)\mid f_{\vec u}=0 \text{ for all } \vec u\notin U \}.\label{eq:HsU}
\end{align}
For functions in these subspaces of $H^s_{\mix}(\T^d)$ we adapt our algorithm to benefit from the structure of $f$. The first spaces were already introduced in~\cite{DuUl13} in terms of Fourier coefficients. But there they used trigonometric polynomials for approximation.

Theorem \ref{thm:anova_psi} tells us, which coefficients of the wavelets representation of a function in $L_2(\T^d)$
coincide to which ANOVA terms. We truncate the operator $P_n$, defined in~\eqref{eq:defPn} to a set 
$\varnothing \in U\subseteq \mathcal P([d])$ by
\begin{equation*}
P_{n,U}f := \langle f, 1_{\T^d}\rangle\, 1_{\T^d}+\sum_{\varnothing\neq\vec u\in U} \sum_{\vec j\in \J_n^{\{\vec u\}}} \sum_{\vec k\in \I_{\uinv{j}{u}}} \langle f,\psi_{\uinv{j}{u},\vec k}^{\per *} \rangle \psi_{\uinv{j}{u},\vec k}^\per,
\end{equation*}
where we define analog to~\eqref{eq:J_n} the index-sets
$$\J_n^{\{\vec u\}} := \{\vec j\in \Z^d\mid \vec j\geq -\vec 1, \vec j_{\vec u^c}=-\vec 1,|\vec j_{\vec u}|_1\leq n\}$$
and $\J_n^{U} = \bigcup_{\vec u\in U}\J_n^{\{\vec u\}}$.
Note that for the whole power set $U = \mathcal P([d])$ we obtain the untruncated projection $P_nf$ from~\eqref{eq:defPn}.
In order to truncate the ANOVA decomposition of $f$, we have to know which terms we can omit. If the function $f$ has 
low superposition dimension $\nu$, we use $U_{\nu}$ as truncation index set, where we have to know $\nu$ in advance or 
we have to make a suitable guess. It turns out that in many real world problems the superposition dimension is low, 
see \cite{CaMoOw97, DePeVo10, KuSlWaWo09, Wu2011, PoSc21}. 
Therefore we have to determine in a first step the variances of the ANOVA terms. Then we omit in the second step these ANOVA terms for which 
$\rho(\vec u, S_n^\X f)< \varepsilon$ for some threshold-parameter $\varepsilon$.

Algorithm~\ref{alg:1} allows us to restrict our approximation to some index-set $U\subset\mathcal P([d])$, 
while using the decomposition in Theorem~\ref{thm:anova_psi}. This coincides with deleting columns in the matrix $\vec A$, 
which belong to ANOVA terms $g_\vec u$with $\vec u\notin U$. Instead of the matrix $\vec A$ of the first step we use the reduced matrix 
$$\vec A_U=(\psi_{\vec j,\vec k}^\per(\vec x))_{\vec x\in \X,\stackrel{\vec j\in \J_n^U}{\vec k\in \I_\vec j}},$$
which allows us to increase the maximal level $n$. The whole algorithm is summarized in Algorithm~\ref{alg:anova}. 
One remaining question is, how much samples do we need for the approximation if we only use some columns $\vec A_U$ of 
the hyperbolic wavelet matrix. Again, we denote by $N$ the number of parameters, i.e. 
$N = \text{dim }\lin \{\psi_{\vec j,\vec k}^\per\mid \vec j\in \J, \vec k\in \I_\vec j\}$.
If we do require nothing to the index set $\J$, we have
$$\sup_{\vec x\in \T^d} \sum_{\vec j\in \J} \sum_{\vec k\in \I_{\vec j}}\left|\psi^\per_{\vec j,\vec k}(\vec x)\right|^2\lesssim  \sum_{\vec j\in \J}2^{|\vec j|_1} = N,$$
which follows by the same arguments as in \eqref{eq:RN_bound}.
This shows that our theory in Section~\ref{sec:HWR} also applies if we choose other index sets than 
$\J_n$ as for the hyperbolic wavelet regression. Especially we can also apply Theorem~\ref{thm:norm_MP}. 
If we use a number of $N$ wavelets for the approximation, we have to use $M \approx N\log N$ samples. 
Especially, if we choose an index set $\J =\J_n^{U_{\nu}}$, this has cardinality $\binom{d}{\nu}2^n n^{\nu-1}$. Therefore, we do in a first step the hyperbolic wavelet approximation with the matrix $\vec A_{U_\nu}$. Then we calculate the global sensitivity indices of the resulting approximant. In the second step  we omit ANOVA terms with low variances. This reduction allows us to increase the accuracy, i.e. to increase $n$. Algorithm~\ref{alg:anova} summarizes that approach. \\

The theory in Section~\ref{sec:HWR} suffers from the truncation to low dimensional terms. All proofs can be done in the same way, but 
instead of the $d$-dependence we receive the a $\nu$-dependance. The number $N$ of necessary 
parameters reduces in this case to $N=\O\left(\binom{d}{\nu}2^n n^{\nu-1}\right)$. The number of samples has to fulfill
$$M \geq \frac{c_\psi^\nu(r+1)}{\gamma_m^\nu \log((e/2)^{(1/2)})}N\log N  = \O\left(\binom{d}{\nu}2^n n^\nu\right). $$
Similarly to Corollary~\ref{cor:one_f} we get 
\begin{equation*}
\P\left(\norm{f-S_n^{\X,U_\nu} f}_{L_2(\T^d)}^2 \lesssim  (1+\tfrac{2}{\gamma_m^\nu}(r+\sqrt r+1)) \,2^{-2ns}\norm{f}^2_{H^s_{\mix}(\T^d)}\right)\geq 1- 2\,M^{-r},
\end{equation*}
where we eliminate the $d$-dependance. The approximation operator $S_n^{\X,U_\nu}$ is defined in Algorithm~\ref{alg:anova}. Analog estimates can be done for the $L_\infty$-error as well as for the spaces $\bB_{2,\infty}^s(\T^d)$. \\

To conclude this, in Table~\ref{tab:Ms_and_Ns} we summarize the asymptotic behavior of full grid approximation, hyperbolic wavelet 
approximation as well as
 approximation of functions with low effective dimensions using ANOVA ideas. For the comparison with the full grid see~\cite[Section 3.5]{Bo17}. 
The hyperbolic wavelet regression coincides with Algorithm~\ref{alg:1}. The truncated hyperbolic regression coincides with steps $2,3$ of 
Algorithm~\ref{alg:anova}. Finally, the ANOVA-hyperbolic wavelet regression coincides with steps $6,7$ of 
Algorithm~\ref{alg:anova}. In all cases we end up with the approximation error from Corollary~\ref{cor:one_f}.

\begin{table}\centering
\begin{tabular}{ccc|ccc}
    \hline
    regression type&space   &defined in & $N$ &$M$ \\
    \hline
		 full grid &$H^s(\T^d)$ && $\O(2^{nd})$ & $\O(2^{nd}nd)$\\
    hyberbolic &$H^s_{\mix}(\T^d)$ &\eqref{eq:Hsmix}& $\O(2^n n^{d-1})$ &$\O(2^n n^d)$\\
		truncated hyperbolic&$H^{s,{\nu}}_\mix(\T^d)$ &\eqref{eq:Hsnu}&$\O(\binom{d}{\nu} 2^n n^{\nu-1})$&$\O(\binom{d}{\nu} 2^n n^{\nu})$\\
		ANOVA-hyperbolic&$H^{s,U}_\mix(\T^d)$ &\eqref{eq:HsU}&$\O(2^n\, |U|\,n^{\max_{\vec u\in U}|\vec u|-1})$&$\O(2^n \,|U|\,n^{\max_{\vec u\in U}|\vec u|})$\\
		\hline

  \end{tabular}
\caption{Number of needed wavelet functions $N$ and needed cardinality $M$ of samples in different settings. }
\label{tab:Ms_and_Ns}
\end{table}

\begin{algorithm}[ht]
\caption{ANOVA - Hyperbolic wavelet regression}
	\vspace{2mm}
	\begin{tabular}{ l l l }
		\textbf{Input:} & $ d $ & dimension\\
		& $ \nu $ & superposition dimension\\
		&	$\X = (x_i)_{i=1}^M\in \T^d$ & sampling nodes \\
		& $\vec y = (f(x_i))_{i=1}^M$ & function values at sampling nodes \\
		& $0<\varepsilon<1$ & threshold parameter
	\end{tabular}
	\begin{algorithmic}[1]
			\STATE{Choose $n$ such that for $N=\bigcup_{\vec j\in \J}|\I_{\vec j}|$ holds $M>N\log N$, where we define $\J =\J^{U_{\nu}}_n$.}
			\STATE{Construct the sparse matrix 
			$$\vec A_{U_{\nu}}=(\psi_{\vec j,\vec k}^\per(\vec x))_{\vec x\in \X,\stackrel{\vec j\in \J }{\vec k\in \I_\vec j}}\in \C^{M\times N}.$$}
	    \STATE{Solve the overdetermined linear system
			$\vec A_{U_{\nu}} \left(a_{\vec j,\vec k}\right)_{\vec j,\vec k} =\vec y$ 
			via an LSQR-algorithm. This gives us the approximation
			$$S_n^{\X,U_\nu} f :=\sum_{\vec j\in \J_n^{U_\nu}}\sum_{\vec k\in \I_j}a_{\vec j,\vec k}\psi_{\vec j,\vec k}^\per$$
			}
			\STATE{Determine $\rho(\vec u,S_n^{\X,U_{\nu}}f)$ using Theorem~\ref{thm:anova_psi}. }
			\STATE{$U\gets \{\vec u\mid \rho(\vec u,S_n^{\X,U_{\nu}}f)>\varepsilon\}\cup \varnothing$ }
			\STATE{Choose $n$ such that for $N=\bigcup_{\vec j\in \J_n^U}|\I_{\vec j}|$ holds $M>N\log N $.}
			\STATE{Construct the sparse matrix 
			$$\vec A_{U}=(\psi_{\vec j,\vec k}^\per(\vec x))_{\vec x\in \X,\stackrel{\vec j\in \J^U_n}{\vec k\in \I_\vec j}}\in \C^{M\times N}.$$}
	    \STATE{Solve the overdetermined linear system 		
				$\vec A_U \left(a_{\vec j,\vec k}\right)_{\vec j,\vec k} =\vec y$
			via an LSQR-algorithm.}
	\end{algorithmic}
	\begin{tabular}{ l l l }
		\textbf{Output:} 
		 &  $\left(a_{\vec j,\vec k}\right)_{\vec j,\vec k}\in \C^N$ coefficients of the approximant $S_n^{\X,U} f :=\sum_{\vec j\in \J_n^{U}}\sum_{\vec k\in \I_j}a_{\vec j,\vec k}\psi_{\vec j,\vec k}^\per$
	\end{tabular}
	\label{alg:anova}

\end{algorithm}

\begin{remark}\label{rem:fourier}
To compare our theory with the results in~\cite{PoSc19a}, in the Fourier setting a full grid approximation 
with polynomial degree at most $2^n$ has the same approximation rate, number of parameters $N$ and number 
of needed samples $M$ as the full grid approximation in the wavelet case. Choosing hyperbolic cross index 
sets in frequency domain gives a similar cardinality of the index set as the hyperbolic wavelet regression. 
But for this case no fast algorithms are available so far. However, in~\cite{PoSc19a} they use full index 
sets of dimension $\nu$, which give the worse estimates $N=\O(\binom{d}{\nu}\,2^{n\nu})$ in comparison to 
the third line of Table~\ref{tab:Ms_and_Ns}. 
\end{remark}

\section{Numerical results}\label{sec:num}
After deriving our theoretical statements in the previous chapters, in this section we now underpin our
findings by several numerical results. We use the Chui-Wang wavelets from Example~\ref{ex:Chui-Wang}, 
which fulfill the properties~\eqref{eq:support}, \eqref{eq:moments} and \eqref{eq:Riesz}.
First we give estimates about the computational cost. We begin with a kink 
function to illustrate our theorems from Section~\ref{sec:wavelets} by using Algorithm~\ref{alg:1}.
Our second example shows the benefit of using wavelets with vanishing moments of higher order for function with higher regularity. The example in Section~\ref{sec:ex3} of 
a high-dimensional function shows that Algorithm~\ref{alg:anova} is a powerful method. The last example in 
Section~\ref{sec:pyramid} shows that even functions in a Sobolev space with low regularity benefit from the ANOVA ideas.

\subsection{Computational cost}
In order to determine the complexity of our algorithms we first calculate the complexity of one matrix multiplication with the hyperbolic wavelet matrix. 
For fixed $\vec j\in \J_n$ there are at most $\lceil 2m-1\rceil ^d$ non-zero entries in every block of columns in every row. Since
$\sum_{\vec u\in U}\sum_{|\vec j_{\vec u}|\leq n} 1=\sum_{\vec u}\O(n^{|\vec u|}),$
every matrix multiplication with the hyperbolic wavelet matrix $\vec A_{U}$ has complexity 
$$\O(M \,\sum_{\vec u}\O(n^{|\vec u|}))=\O(2^n |U| n^{\max|\vec u|} )=\O(M(\log M)^{\max|\vec u|} ).$$ 
The cases where we choose $U=\mathcal P([d])$ in Algorithm~\ref{alg:1} and $U=U_\nu$ in Algorithm~\ref{alg:anova} are included in this consideration.
Note that in contrast to the Fourier setting, see~\cite{PoSc19a}, our index set has a lower cardinality $N$, which gives a better complexity of the algorithm.\\

The second factor which plays a role is the number of iterations $r^*$. The whole algorithm has a complexity of
$$\O(r^*\,M(\log M)^{\max|\vec u|}).$$ 
Theorem~\ref{thm:norm_MP} gives us an estimation about the minimal eigenvalues of the matrix $\vec A^*\vec A$. 
In order to bound the maximal eigenvalues of the matrix $\vec A^*\vec A$, we can use the same argumentation as in the 
proof of Theorem \ref{thm:norm_MP} together with the bound for the maximal eigenvalues in Theorem~\ref{thm:matrix_chernoff}. This gives us for $r>1$
$$ \P\left(\mu_{\max}\left(\vec A^*\vec A\right)\geq \tfrac{3\,M}{2} \right)\leq \tfrac{1}{M^r},$$
if we are in the setting of logarithmic oversampling. Hence, with high probability we can bound the condition number of the matrix $\vec A^*\vec A$ by
$$\kappa(\vec A^*\vec A):=\frac{\mu_{\max}(\vec A^*\vec A)}{\mu_{\min}(\vec A^*\vec A)}\leq  \frac{3}{\gamma_m^d}. $$
 
Following \cite[Example 13.1]{ax} the maximal number of iteration $r^*$, to achieve an accuracy of $\varepsilon$, can be bounded by
\begin{align*}
r^*\leq \log\left(\frac 2\varepsilon\right)\,\left(\log\left(\frac{1+(\gamma_m^{d}/3)^{1/4}}{1-(\gamma_m^{d}/3)^{1/4}}\right)\right)^{-1} .
\end{align*}
As an example, if we aim an accuracy of $\varepsilon = 10^{-4} $ and we choose as parameters the order of vanishing moments $m =3$ and the dimension $d =2$, this bound gives us $r^*\leq 85$ with high probability.\\

The calculation of the global sensitivity indices in step $4$ of Algorithm~\ref{alg:anova} does not play role compared to the LSQR-algorithm, since this
can be done in a fast way. There are only few different matrix entries in the matrices $\vec \Lambda_{\vec j}$ in \eqref{eq:gsi_of_g}, so these 
few entries can be precomputed. Also, these matrices are sparse circulant matrices of size smaller than $2^{n\nu}$.

\subsection{Kink test function}
We start with an example of an $L_2(\T^d)$-normalized kink function,
\begin{equation}\label{eq:kink}
f:\T^d \to \R,\quad f(x) = \prod_{i=1}^d\left(\sqrt{\frac{98415}{32}}\max\left(\frac19 - x^2,0\right)\right)\in \bB^{3/2}_{2,\infty}(\T^d),
\end{equation}
which has the Fourier-coefficients 
\begin{equation*}
c_{\vec k}(f) = \prod_{i=1}^d\begin{cases}
\frac{27}{8}\,\sqrt{\tfrac{15}{2}}\,\frac{3\,\sin(2k_i \pi /3)-2k_i\pi \cos(2k_i\pi/3)}{\pi^3 k_i^3}&\text{ for } k_i\neq 0,\\
\sqrt{\tfrac{15}{2}} & \text{ for } k_i=0,
\end{cases}
\end{equation*}
i.e. the Fourier coefficients of this function decay like $|c_{\vec k}(f)|\sim\prod_{i=1}^d\left(1+|k_i|^2\right)^{-1}$. 
Consequently, $f\in H^s_\mix(\T^d)$ with $s = 1+\tfrac 12 -\varepsilon=\tfrac 32 -\varepsilon  $. In addition it follows that $f \in \bB^{3/2}_{2,\infty}(\T^d)$. Hence, we use wavelets with vanishing moments of order $m>\tfrac 32$.

\begin{figure}[ht]

  \centering
\begin{tikzpicture}[scale=1]
\begin{axis}[xmin=-0.5,xmax=0.5,scale only axis, width=5cm]
\addplot[domain = -1/9^(1/2):1/9^(1/2),blue]{98415^(1/2)/112^(1/2)*(1/9 - x^2)};

\addplot[domain = -0.5:-1/9^(1/2),blue]{  0};  
\addplot[domain = 1/9^(1/2):0.5,blue]{  0};\end{axis}
\end{tikzpicture}
 
\caption{The kink function \eqref{eq:kink} for $d=1$.}
	\label{fig:function}
	\end{figure}
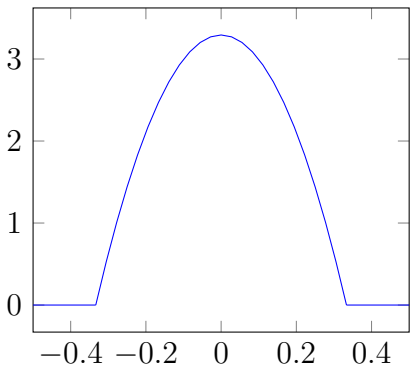
We will begin with the one-dimensional function, which is plotted in Figure \ref{fig:function}.  
For our approximation we use $n= 9$, i.e we have $N = 1024$ parameters. To apply our theory, 
we have to choose logarithmic oversampling. For that reason we sample the function at the i.i.d. 
samples $\X$ with $|\X|=M=20000$. Then we use Algorithm~\ref{alg:1} to approximate the 
coefficients $a_{j,k}\approx \langle f,\psi_{j,k}^{\per,*}\rangle$. Figure~\ref{fig:1d_wavelets_coeff1} shows the resulting coefficients.  
\begin{figure}[ht]
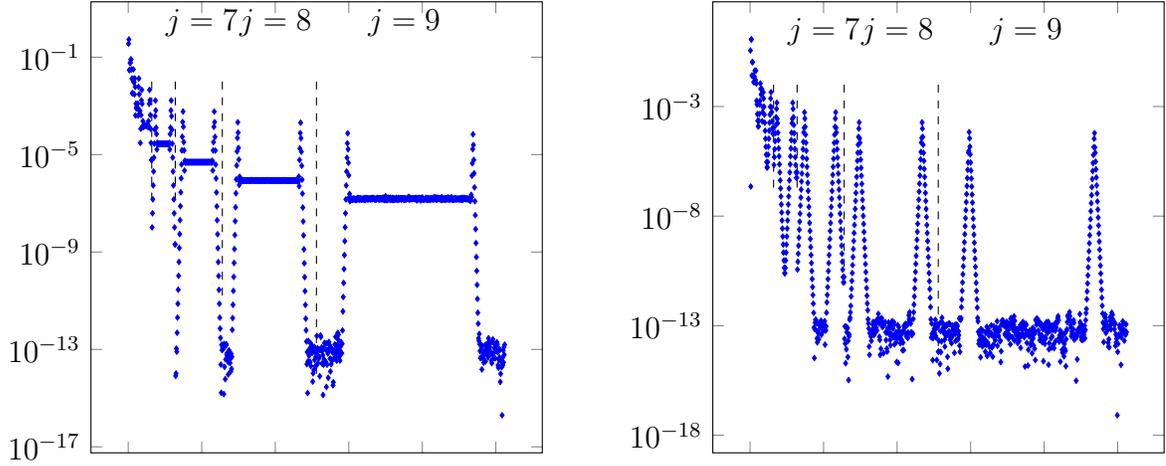

\centering
\begin{subfigure}[c]{0.49\textwidth}
\raggedleft
 \end{subfigure}
\caption{Wavelet coefficients of the kink function after approximation for $m = 2$ (left) and $m = 3$ (right) for $d=1$.}
\label{fig:1d_wavelets_coeff1}
\end{figure}
Our test function is piecewise polynomial of degree $2$, only at the two kink points we have regularity $\tfrac 32$. Figure~\ref{fig:1d_wavelets_coeff1} shows that the wavelet coefficients detect locally lower regularity. This is due to the compact support of the wavelets. 

In Figure \ref{fig:1d_wavelets_coeff2} we plotted the sums $\sum_{|\vec j|_1=n}\sum_{\vec k\in \I_{\vec j}}|a_{\vec j,\vec k}|^2$. In Theorem~\ref{thm:norm-psi*} we proved that $\sum_{\vec k\in \I_{\vec j}}|\langle f, \psi_{\vec j,\vec k}^{*,\per}\rangle|^2$ decay like $2^{-2ns}$. Lemma~\ref{lem:N} gives the estimation that there are $\binom{n+d-1}{d-1}=\O(n^{d-1})$ indices $\vec j$, such that $|\vec j|_1=n$. This gives us a proposed decay rate $2^{-3n}n^{d-1}$. We see this decay even though we approximated the wavelet coefficients $\langle f,\psi_{\vec j,\vec k}^{*,\per}\rangle$ by the solutions $a_{\vec j,\vec k}$ of the hyperbolic wavelet regression.   

\begin{figure}[ht]
\begin{subfigure}[l]{0.5\textwidth}
\raggedleft
\begin{tikzpicture}[scale=1]
\begin{semilogyaxis}[scale only axis,width = 4.4cm, height = 4.4cm,
xlabel = {level $n$},ylabel={sum wavelet coefficients},
legend entries={$d=1$ ,$d=2$ ,$d=3$ , $2^{-3 n }$,$2^{- 3n  }\,n$,$2^{-3 n  }\,n^2$},
legend style={at={(axis cs:9.8,10^-7)},anchor=south west,scale =0.6},
grid=major
]

\addplot+[blue,mark = *]  coordinates {
(1,0.007420063242759986)
(2,0.08390254796927547)
(3,0.010696309292201575)
(4,0.001914971341935989)
(5,0.00021499480004481694)
(6,2.859600290040552e-5)
(7,3.4598394883977946e-6)
(8,4.4192855362084734e-7)
(9,4.554633333194939e-8)
};

\addplot[red,mark=*]  coordinates {
(1,0.02605977990464884)
(2,0.29472569665706005)
(3,0.03881217203162489)
(4,0.013924611922015984)
(5,0.002578879546530748)
(6,0.0005396357919130869)
(7,8.97722637366388e-5)
(8,1.4680772133621016e-5)
(9,2.2042792615882147e-6)

};

\addplot[green,mark=*]  coordinates {
(1,0.06864205212945357)
(2,0.7764582745753281)
(3,0.10551034727122)
(4,0.05565125886358685)
(5,0.01175308057190851)
(6,0.0032086490384034727)
(7,0.0006761140514349357)
(8,0.00014478636815719583)
(9,2.8000145680203137e-5)

};

\addplot[black,mark=none,dashed] coordinates {(4,1*2^-4*3) (9,1*2^-3*9)};
\addplot[orange,mark=none,dashed] coordinates {(4, 10*4 * 2^-4*3 ) (9, 10 * 9  * 2^-3*9 )};
\addplot[cyan,mark=none,dashed] coordinates {(4, 10*4^3 * 2^-4*3 ) (9, 10 * 9^3  * 2^-9*3 )};

\end{semilogyaxis}
\end{tikzpicture} \subcaption{Decay of the wavelet coefficients for $m=2$.}
\label{fig:1d_wavelets_coeff2}
\end{subfigure}
\begin{subfigure}[c]{0.5\textwidth}
\raggedleft
\begin{tikzpicture}[scale=1]
\begin{semilogyaxis}[scale only axis,width = 4.4cm, height = 4.4cm,
xlabel = {level $n$},ylabel ={RMSE},
legend entries={$d=1$ ,$d=2$,$d=3$,$2^{- n \, 3/2}$,$2^{- n \, 3/2}\,n^{1/2}$,$2^{- n \, 3/2}\,n$},
grid=major,
legend style={at={(axis cs:13.1,10^-6)},anchor=south west,scale =0.6}
]
\addplot[blue,mark=*]  coordinates {
(3,0.05111863650215732)
(4,0.017215332542154075)
(5,0.005691907961835877)
(6,0.002399019868404038)
(7,0.0006429678005481621)
(8,0.00023821863949560293)
(9,8.336269795759345e-5)
(10,2.8710997130589267e-5)
(11,1.0007319659266092e-5)
(12,3.5600705304164336e-6)
};

\addplot[red,mark=*]  coordinates {
(3,0.16856421475200833)
(4,0.07089288280406299)
(5,0.02624329124676823)
(6,0.010167210102632482)
(7,0.003942528258178008)
(8,0.0014441024076449257)
(9,0.0005394330184956737)
(10,0.00018886829588602564)
(11,6.715275607665141e-5)
};

\addplot[green,mark=*]  coordinates {

(1,19.832951875445758)
(2,5.765613719396113)
(3,3.1673664718743755)
(4,1.2614449968382548)
(5,0.5706041580507363)
(6,0.23580773858796403)
(7,0.09962562030897586)

};

\addplot[black,mark=none,dashed] coordinates {(4,0.1*1/2^6) (12,0.1*1/2^18)};
\addplot[orange,mark=none,dashed] coordinates {(3, 5*3^1/2 * 2^-3/2*3 ) (12, 5 * 12^1/2  * 2^-3/2*12 )};
\addplot[cyan,mark=none,dashed] coordinates {(3, 10*3 * 2^-3/2*3 ) (7, 10 * 7  * 2^-3/2*7 )};

\end{semilogyaxis}
\end{tikzpicture} \subcaption{Decay of the RMSE for $m=2$. }
\label{fig:1d_conv}
\end{subfigure}
\caption{Approximation of the kink function.}
\end{figure}
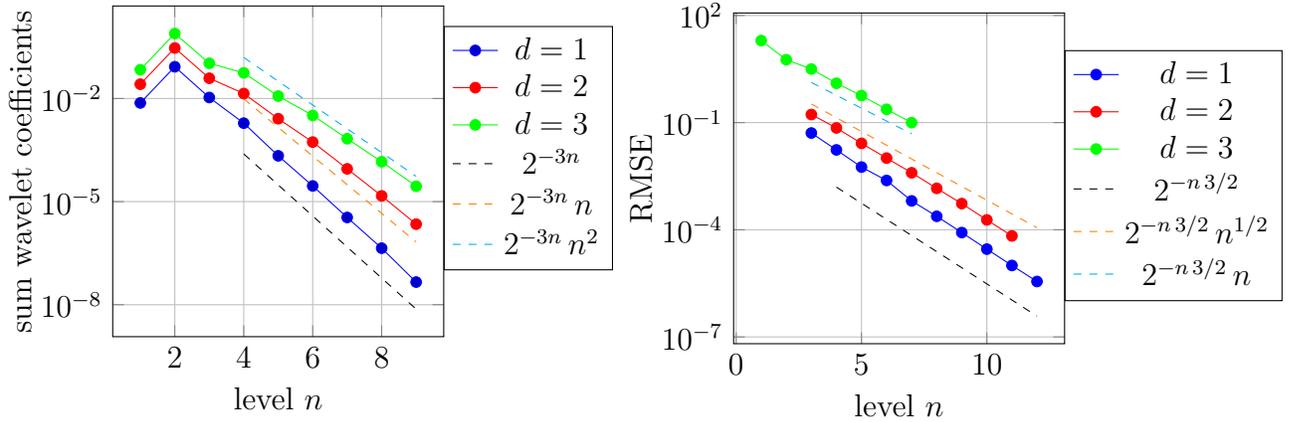

Using different parameters $n$, 
while always ensuring logarithmic oversampling, we see the decay of the $L_2(\T^d)$-error in Figure~\ref{fig:1d_conv}. It matches the proposed error bound $2^{-3/2n}n^{(d-1)/2}$ from Corollary~\ref{cor:one_f}. To measure the error we use the \textit{root mean squared error} (RMSE), which is defined by
\begin{equation*}\label{eq:MSE}
\text{RMSE} = \left(\frac{1}{|\X_{\text{test}}|}\sum_{\vec x\in \X_{\text{test}}}|f(\vec x)-(S_n^\X f(\vec x))|^2\right)^{\tfrac 12},
\end{equation*}
for some sample points $\X_{\text{test}}\subset \T^d$, 
which gives us a good estimator for the $L_2(\T^d)$-error $\norm{f-S_n^\X f}_{L_2(\T^d)}$. Since we always use $L_2(\T^d)$-normalized test functions, 
the RMSE can be interpreted as a relative error.
For the RMSE we use random points $\X_{\text{test}}\subset \T^d$ with $|\X_{\text{test}}|=10^6$ as test samples.

\subsection{A function with higher mixed regularity}\label{sec:ex3}
We consider the following test function 
\begin{equation}\label{eq:ex2}
f:\T^3 \to \R,\quad f(\vec x) = \prod_{i=1}^3 B_3(4x_i-\tfrac 1\pi), \quad \vec x\in [-\tfrac 12,\tfrac 12)^3,
\end{equation}
where we use the B-spline from~\eqref{eq:BSpline}. This function is in $H^{5/2-\varepsilon}_\mix(\T^3)$ and in $\bB^{5/2}_{2,\infty}(\T^d)$. 
We do an approximation using Algorithm~\ref{alg:1}, where we use Chui-Wang wavelets of order $m=2$ and $m=3$. 
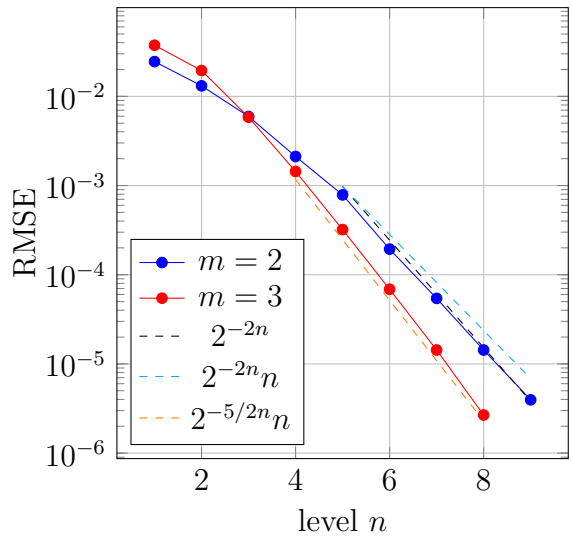
\begin{figure}[htb]
\centering
\begin{tikzpicture}[scale=1]
\begin{semilogyaxis}[scale only axis,width = 6cm, height = 6cm,
xlabel = {level $n$},ylabel ={RMSE},
legend entries={$m = 2$, $m = 3$, $2^{-2n}$,$2^{-2 n }n$,$2^{-5/2 n }n$},
grid=major,
legend pos=south west,
]
\addplot[blue,mark=*]  coordinates {

(1,0.02453002031561157)
(2,0.013125555463623787)
(3,0.005950874471086693)
(4,0.002118353263104248)
(5,0.0007854944568757404)
(6,0.00019373301595564362)
(7,5.428381669121783e-5)
(8,1.4315719102259882e-5)
(9,3.956760930675728e-6)

};

\addplot[red,mark=*]  coordinates {

(1,0.037439314538755514)
(2,0.019478209284568882)
(3,0.005836622364221294)
(4,0.0014369814273951253)
(5,0.0003210210349160611)
(6,6.872014743947136e-5)
(7,1.4327812916367332e-5)
(8,2.677051115960276e-6)

};

\addplot[black,mark=none,dashed] coordinates {(5,1 * 2^-5*2) (9, 1 * 2^-9*2)};
\addplot[cyan,mark=none,dashed] coordinates {(5,  0.2 * 5^1  * 2^-5*2  ) (9,0.2 *9^1 * 2^-9*2  )};
\addplot[orange,mark=none,dashed] coordinates {(4,  0.3*4^1  * 2^-4*2.5  ) (8,0.3*8^1 * 2^-8*2.5  )};

\end{semilogyaxis}
\end{tikzpicture} 
\caption{Decay of the RMSE for the test function~\eqref{eq:ex2}.}
\label{fig:RMSE_ex2}
\end{figure}

While always ensuring logarithmic oversampling, we use $|\X_{\text{test}}|= 3M$ samples for calculation of the RMSE. 
The results can be seen in Figure~\ref{fig:RMSE_ex2}. This confirms our proposed error decay from Corollary~\ref{cor:one_f}. 
For $m=2$ we are in the setting where $s=m$ and the error decays a bit faster than $2^{-2n}n$ but slower than $2^{-2n}$ (recall that $d=3$). 
If we use wavelets of higher regularity, i.e. $m=3$, we reduce the error as well as the decay rate compared to $m=2$. 
We are in the case where $s<m$ and since the test function is in $\bB_{2,\infty}^{5/2}(\T^d)$, we proved that the error decays like 
$2^{-5/2n}n$, which is confirmed by the numerical experiments.

\subsection{A high-dimensional function with low effective dimension}
The Ishigami function \cite{Is90} is used as an example for uncertainty and sensitivity analysis methods, 
because it exhibits strong non-linearity and non-monotonicity. Since we are in the periodic setting, we consider the suited 
periodized version, add a term which consists of a B-spline term and add additional dimensions, which do not contribute to the function 
$f:\T^8\to \R$ with 
\begin{equation}\label{eq:f}
f(\vec x) = c\,(-\tfrac 72 + \sin(2\pi \,x_1)+7\,\sin^2(2\pi\, x_2)+0.1\, x_3^4\sin(2 \pi \, x_1)+ 10^3\,g(\vec x_{\vec v})),
\end{equation}
where $\vec v = \{6,7,8\}$ and $g$ is a tensor product of a three-dimensional B-spline, see \eqref{eq:BSpline}, of order $6$, given by $g(\vec x_{\vec v})=\prod_{i=6}^8 \left(B_6(2^4 x_i)-\frac{1}{16}\right)$ and the constant $c$ is 
such that the function is normalized to $\norm{f}_{L_2(\T^d)}=1$.
The ANOVA terms and their variances can be computed analytically. The variances of $f_\vec u$ are non-zero only 
for the indices $\vec u\in \{\{1\},\{2\},\{1,3\},\{6,7,8\}\}$. 
It follows easily that the effective dimension of $f$, in the superposition sense, see Definition~\ref{def:effective_dim}, for 
$\epsilon_s=1$ is $\nu = 3$. 
We use this function to test Algorithm~\ref{alg:anova} and initially choose $n=2$, which means that we use $N = 2269$ parameters. 
Therefore we randomly draw $M=10^5$ sample points on $\T^8$. Figure~\ref{fig:gsi} 
shows all $|U_3|=92$ resulting global sensitivity indices from the first step of our 
algorithm as proposed in Theorem~\ref{thm:anova_psi} in comparison to the analytically calculated global sensitivity indices. 
Although we have low maximal level $n$, we can detect the correct ANOVA terms. 
Numerical experiments showed that even with a maximal level $0$ or $1$, we can detect the correct ANOVA terms. 
In the second step we use this information and build an adapted model, i.e. we use $U = \{\varnothing,\{1\},\{2\},\{1,3\},\{6,7,8\}\}$ 
and increase the maximal level to $n = 6$. 

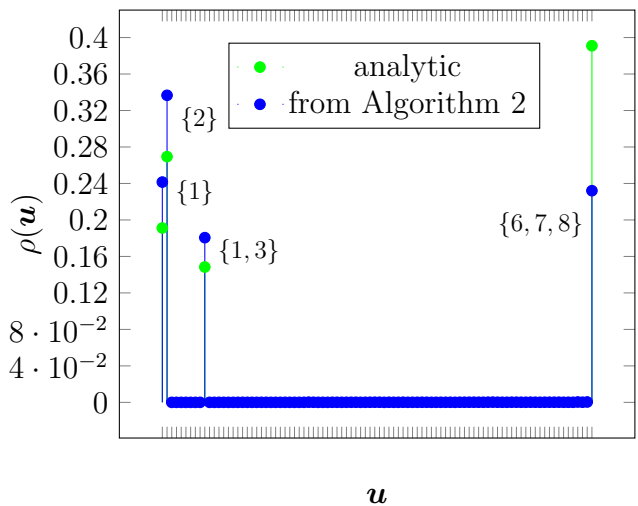
\begin{figure}[ht]
\centering
\begin{tikzpicture}
\begin{axis}[ylabel={$\rho(\vec u)$},
			xlabel={$\vec u$},
			xtick={1,...,92},
			xticklabel=\empty,
			ytick distance=0.04,
			xtick distance=10,
			legend entries={analytic,from Algorithm~\ref{alg:anova}},
legend style={at={(axis cs:15,0.3)},anchor=south west,scale =0.8},
			]
			
			\addplot+[ycomb, green,mark=*, mark options={green}] plot coordinates
			{(1,0.1912)
(2,0.2694)
(10,0.1484)
(92,0.3910)
};
			\addplot+[ycomb, blue,mark=*, mark options={blue}] plot coordinates
			{

(1,0.24153597599559753)
(2,0.33665970671982126)
(3,1.7767810247926656e-5)
(4,2.1025185632957515e-5)
(5,2.168330603101333e-5)
(6,7.565107421285341e-6)
(7,1.4257331677310236e-5)
(8,8.322516651874926e-6)
(9,3.2223899485512406e-5)
(10,0.18060319414467976)
(11,4.444103325715087e-5)
(12,2.3768691756249132e-5)
(13,4.8370615592550214e-5)
(14,4.21863147801197e-5)
(15,2.882947626081978e-5)
(16,3.790756026457199e-5)
(17,3.5883525807961565e-5)
(18,6.538804246852549e-5)
(19,6.12099850284412e-5)
(20,4.237741685353411e-5)
(21,3.743388028240746e-5)
(22,5.762950543527411e-5)
(23,6.247088541988502e-5)
(24,3.576372859487476e-5)
(25,4.63487127680813e-5)
(26,3.89396476417618e-5)
(27,6.005492553441951e-5)
(28,5.293376154848434e-5)
(29,4.405371345884129e-5)
(30,5.129290745116281e-5)
(31,6.933678795866517e-5)
(32,0.0002134282030492247)
(33,0.00011976382094980741)
(34,4.1217960097260803e-5)
(35,4.297927040335825e-5)
(36,3.0027669177974564e-5)
(37,0.00010252480871021029)
(38,0.00010644072806978013)
(39,6.657345658091251e-5)
(40,9.38375960077142e-5)
(41,0.00011067492568236068)
(42,0.00013341430378186248)
(43,5.775472393629593e-5)
(44,0.00010053620486701932)
(45,0.00012287008121354118)
(46,0.0001703474690404934)
(47,0.00015989499162004565)
(48,4.239309375583368e-5)
(49,0.00012844830242332098)
(50,7.50878568318896e-5)
(51,7.921817601480944e-5)
(52,0.0001490185670659867)
(53,7.231732778246743e-5)
(54,7.827869451478764e-5)
(55,0.00010094182606218802)
(56,0.00012381556585984828)
(57,0.00011460711229281735)
(58,6.784979299154731e-5)
(59,8.476642205066531e-5)
(60,0.00010358894111445812)
(61,0.00010168659357947023)
(62,0.00010759128965989494)
(63,7.324044045341094e-5)
(64,0.00014561162843732496)
(65,0.00015028351915478407)
(66,0.0001648795105360265)
(67,0.0001667153200836816)
(68,0.0001406611405351508)
(69,0.00012568557503432587)
(70,0.00018864205964196128)
(71,0.0002460764288009544)
(72,0.00015495686345288174)
(73,0.00011993412988190183)
(74,0.00011226774893775424)
(75,0.00017363399267357233)
(76,0.00017970455301309312)
(77,0.0001317922467458065)
(78,0.000122177984148131)
(79,0.00018107846497296108)
(80,0.0001631358290455714)
(81,0.00011985796268541476)
(82,0.00016291732357666869)
(83,0.00016704385247647506)
(84,0.00015821596386713616)
(85,0.00019385774183386332)
(86,0.00012131093005794921)
(87,0.0001405967446351259)
(88,0.00010623936985838819)
(89,0.0004088221480325772)
(90,0.00017512665068217405)
(91,0.00042000963439373876)
(92,0.2320752853317532)
};

\node at (axis cs:2,0.23) [anchor=west,scale =0.8] {$\{1\}$};
			\node at (axis cs:3,0.31) [anchor=west,scale =0.8] {$\{2\}$};
			\node at (axis cs:11,0.165) [anchor=west,scale =0.8] {$\{1,3\}$};
				\node at (axis cs:92,0.195) [anchor=east,scale =0.8] {$\{6,7,8\}$};

		\end{axis}

\end{tikzpicture} \caption{Global sensitivity indices $\rho(\vec u)$ of the function~\eqref{eq:f} analytically computed and from Algorithm~\ref{alg:anova} with $n=2$.}
\label{fig:gsi}
\end{figure}
To approximate the $L_2(\T^d)$-error, we use the RMSE with random test samples with $|\X_{\text{test}}|= 10^6$. 
The second step of Algorithm~\ref{alg:anova} allows us to decrease this RMSE from $0.479$ after the first step to $0.064$.
We stress the fact, that this strategy, finding the unimportant dimension interactions and building an adapted model, allows an interpretation of the data. See also~\cite{PoSc21} for real world application in combination with a Fourier basis.

\subsection{A function with small mixed regularity}\label{sec:pyramid}
\label{pyramid}
\begin{figure}[hb]
\begin{subfigure}[t]{0.48\linewidth}
\centering
 \raggedleft
\begin{tikzpicture}[scale=1]

\begin{loglogaxis}[
xlabel = { $N$},ylabel ={RMSE},
legend entries={$U_2$,$U$ in \eqref{eq:U}, $N^{-s} (\log N )^{s}$},
grid = major,
scale only axis, 
width = 6cm, height =6cm,
      enlarge x limits = 0,
			legend pos = south west,
			]

\addplot[red,mark=*, mark options={red}]  
coordinates {

(94,0.687223438540751)
(298,0.3204880246591772)
(826,0.30059076618859415)
(2122,0.14544633065552567)
(5194,0.13015787819900934)
(12298,0.05670854335190603)
(28426,0.050141340422413076)

};

\addplot[blue,mark=*, mark options={blue}]  
coordinates {

(34,0.7332500762501897)
(94,0.32019702903330644)
(238,0.3036899765438543)
(574,0.14738901901377185)
(1342,0.13122711009420043)
(3070,0.057131914828883364)
(6910,0.05045809272383272)

};

\addplot[domain = 100:3000,black,dashed]{ 2*x^(-0.75)*ln(x)^(0.75)};

\end{loglogaxis}
\end{tikzpicture} 
\subcaption{With Algorithm~\ref{alg:anova} achieved RMSE for different levels $n$ and $s=3/4$. }
\label{fig:pyramid1}
\end{subfigure}
\begin{subfigure}[t]{0.48\linewidth}
\centering
 \raggedleft
\begin{tikzpicture}[scale=1]

\begin{semilogyaxis}[xlabel = {level $n$}, ylabel={},
legend entries={$|\vec u|=1$,,,,,,,,,,,,,,,,,,,, $|\vec u|=2$,$2^{-2 n}$},
scale only axis, 
width = 6cm, height = 6cm,
enlarge x limits = 0,
grid = major,
legend pos = south east,
]

\addplot[black]
coordinates {
(1,0.6495362412667611)
(2,0.16234342936793944)
(3,0.04059546569045314)
(4,0.010173018464709409)
(5,0.0025434711440037706)
(6,0.000719490988346319)
(7,0.0005441877042348867)
(8,0.0006692443364490774)
};
\addplot[black]
coordinates {
(1,0.6495922147735796)
(2,0.16236918355487673)
(3,0.0405454024492381)
(4,0.010209789242228814)
(5,0.0025387724729860985)
(6,0.000800447266796756)
(7,0.0005165691220450681)
(8,0.0006832425338754297)
};
\addplot[black]
coordinates {
(1,0.649594426893255)
(2,0.16231939996593595)
(3,0.040629464310695786)
(4,0.010149992119249242)
(5,0.002616390965404838)
(6,0.0007178806379056718)
(7,0.0004809277484926765)
(8,0.0007138683900536173)
};
\addplot[black]
coordinates {
(1,0.6495489754549153)
(2,0.16233957168678417)
(3,0.0406018684527865)
(4,0.0101182212035183)
(5,0.0025727225672041846)
(6,0.0007116293298710109)
(7,0.00042315554349395073)
(8,0.0006628237972755088)
};
\addplot[black]
coordinates {
(1,0.6494720240371122)
(2,0.162368456740353)
(3,0.04052801482435007)
(4,0.010186572884529052)
(5,0.002595296320544535)
(6,0.0007371093215377501)
(7,0.0005392185248880095)
(8,0.0006250381970834683)
};
\addplot[black]
coordinates {
(1,0.6495132763271753)
(2,0.16238820002977852)
(3,0.040613548698388596)
(4,0.010158664864587951)
(5,0.0025661490858287215)
(6,0.0007438500140199381)
(7,0.0005187332028858305)
(8,0.0007022006593729644)
};
\addplot[magenta]
coordinates {
(1,0.00012585140312006702)
(2,0.9434206885744736)
(3,0.10872237471021778)
(4,0.5223580371951858)
(5,0.1612754324943838)
(6,0.30737150137355773)
(7,0.06741890734806953)
(8,0.1330109618388737)
};
\addplot[magenta]
coordinates {
(1,9.949122939625308e-5)
(2,0.00023868854842432982)
(3,0.00042034320685479094)
(4,0.0006851700682648239)
(5,0.001116160044485033)
(6,0.0019124628062473382)
(7,0.002929367201110399)
(8,0.004042195716777138)
};
\addplot[magenta]
coordinates {
(1,0.0001169624177804665)
(2,0.0002757880415284124)
(3,0.00041554006679394906)
(4,0.0007618305218013496)
(5,0.001241567714709735)
(6,0.0019305327370450374)
(7,0.0028098343004106965)
(8,0.0040638480320883495)
};
\addplot[magenta]
coordinates {
(1,0.00010916724342187812)
(2,0.00024904990277345435)
(3,0.000419936827122662)
(4,0.0007230799932701224)
(5,0.0011521347834423025)
(6,0.001678631818940915)
(7,0.002787022393026891)
(8,0.004163546191526254)
};
\addplot[magenta]
coordinates {
(1,6.409042849083584e-5)
(2,0.00026629468517591566)
(3,0.000462660752307487)
(4,0.0007772286466687606)
(5,0.0010695201064184394)
(6,0.001811742592885263)
(7,0.0029144554755266153)
(8,0.004143101134802565)
};
\addplot[magenta]
coordinates {
(1,4.937281564439739e-5)
(2,0.00017997919907153801)
(3,0.0003691380887512218)
(4,0.0007071798495478836)
(5,0.0011823003128040273)
(6,0.0018231336421421949)
(7,0.0027546122545079113)
(8,0.0042251570304116205)
};
\addplot[magenta]
coordinates {
(1,0.00017904264511462624)
(2,0.0003424450102477361)
(3,0.0003678061043236637)
(4,0.0006074069246974958)
(5,0.0011999665416983062)
(6,0.0018287339570815127)
(7,0.002709135853384796)
(8,0.004089674015702717)
};
\addplot[magenta]
coordinates {
(1,0.00022486690549415066)
(2,0.00033257628262424937)
(3,0.0004968607498513115)
(4,0.0006994666446562211)
(5,0.0012756446599973177)
(6,0.0017562225607583779)
(7,0.0028457377477527487)
(8,0.004064419231568448)
};
\addplot[magenta]
coordinates {
(1,8.965515130640907e-5)
(2,0.0002883004047804202)
(3,0.0004107311372244769)
(4,0.000826593211290376)
(5,0.0012757094697537458)
(6,0.0016438907660424956)
(7,0.002965791742116197)
(8,0.004231524322409198)
};
\addplot[magenta]
coordinates {
(1,4.91685533462647e-5)
(2,0.9434136530283833)
(3,0.10870520896346596)
(4,0.5222460273957965)
(5,0.161323299896875)
(6,0.30722285691201634)
(7,0.06749920259662301)
(8,0.13298110799377375)
};
\addplot[magenta]
coordinates {
(1,0.00022113706112924828)
(2,0.00030820710496091394)
(3,0.0004578359742872467)
(4,0.000729344204429171)
(5,0.0012217936206551305)
(6,0.0018317086615514872)
(7,0.002834975261237489)
(8,0.00426991392022043)
};
\addplot[magenta]
coordinates {
(1,0.0001235162968984801)
(2,0.0003057082819618695)
(3,0.000669063084157357)
(4,0.0008386413608456297)
(5,0.0012633669895491114)
(6,0.001843205658946189)
(7,0.0027757773604931138)
(8,0.004317394861392546)
};
\addplot[magenta]
coordinates {
(1,0.0002056909026251987)
(2,0.0001369217614547618)
(3,0.0004611759990234885)
(4,0.0007534301635048921)
(5,0.0011879890131873505)
(6,0.0018300798058074757)
(7,0.0027119112210899376)
(8,0.00416550287982399)
};
\addplot[magenta]
coordinates {
(1,9.102634091529204e-5)
(2,0.00026696342435775895)
(3,0.0004505120768409744)
(4,0.0008501881218895173)
(5,0.0011599385726226168)
(6,0.0017668016936417885)
(7,0.0028913007039814112)
(8,0.004130763184153135)
};
\addplot[magenta]
coordinates {
(1,7.567257478924667e-5)
(2,0.9433877893145828)
(3,0.10884657031686946)
(4,0.5222643025014942)
(5,0.161178657269903)
(6,0.3070536830752586)
(7,0.0674850192353431)
(8,0.13295685499805657)
};

\addplot[domain = 1:5,black,dashed]{ 0.5*2^(-2*x)};

\end{semilogyaxis}
\end{tikzpicture} \subcaption{decay of the wavelet coefficients for different ANOVA terms. }
\label{fig:pyramid2}
\end{subfigure}
\begin{subfigure}[t]{0.5\linewidth}
\begin{tikzpicture}
\begin{axis}[scale only axis,width = 6cm, height = 6cm,
ylabel={$\rho(\vec u)$},
			xlabel={$\vec u$},
			xtick={1,...,21},
			xticklabel=\empty,
			ytick distance=0.04,
			xtick distance=10,
			legend entries={analytic,from Algorithm~\ref{alg:1}},
legend style={at={(axis cs:7,0.09)},anchor=south west,scale =0.8},
			]
			
			\addplot+[ycomb, green,mark=*, mark options={green}] plot coordinates
			{(1,2/15)
(2,2/15)
(3,2/15)
(4,2/15)
(5,2/15)
(6,2/15)
(7,1/15)
(16,1/15)
(21,1/15)

};
			\addplot+[ycomb, blue,mark=*, mark options={blue}] plot coordinates
			{

(1,0.13447779188546122)
(2,0.1351483977033039)
(3,0.13743017292446147)
(4,0.13656972109675397)
(5,0.13615004905662637)
(6,0.13690793294139264)
(7,0.06012929970744424)
(8,1.3685627857931502e-5)
(9,1.6867784274436945e-5)
(10,1.1488171154329058e-5)
(11,9.249669943767697e-6)
(12,1.1143238475691404e-5)
(13,3.2099717181230255e-6)
(14,1.3152923700658927e-5)
(15,6.3089157275583496e-6)
(16,0.06199351901191231)
(17,1.1672438284073698e-5)
(18,4.455660622753515e-6)
(19,1.631064868088903e-5)
(20,1.0244104348631551e-5)
(21,0.06106532651785512)

};

\node at (axis cs:-0.5,0.145) [anchor=west,scale =0.7] {$\{1\}$};
\node at (axis cs:1.5,0.145) [anchor=west,scale =0.7] {$\{3\}$};
\node at (axis cs:3.5,0.145) [anchor=west,scale =0.7] {$\{5\}$};
\node at (axis cs:7,0.06) [anchor=west,scale =0.7] {$\{1,2\}$};
\node at (axis cs:16,0.06) [anchor=east,scale =0.7] {$\{3,4\}$};
\node at (axis cs:21,0.06) [anchor=east,scale =0.7] {$\{5,6\}$};

		\end{axis}

\end{tikzpicture} \caption{Global sensitivity indices of function~\eqref{eq:pyramid} analytic and from Algorithm~\ref{alg:anova} with $n=1$.}
\label{fig:gsi2}
\end{subfigure}
\begin{subfigure}[t]{0.5\linewidth}
\centering
\begin{tikzpicture}[scale=1]

\begin{semilogyaxis}[xlabel = {level $n$},ylabel ={$M$},
grid=major,
scale only axis, 
width = 6cm, height = 6cm,
      enlarge x limits = 0,
			legend pos = north west
]

\addplot[green,mark=*]  
coordinates {

(1,3946)
(2,24127)
(3,115775)
(4,479956)
(5,1800603)
(6,6277371)
(7,20680713)

};

\addlegendentry{Alg. \ref{alg:1}}
\addplot[red,mark=*]  
coordinates {

(1,617)
(2,2450)
(3,8004)
(4,23451)
(5,64108)
(6,167083)
(7,420561)

};
 \addlegendentry{$U_2$}

\addplot[blue,mark=*]  
coordinates {
(1,173)
(2,617)
(3,1879)
(4,5261)
(5,13944)
(6,35563)
(7,88134)

};
\addlegendentry{$U$ in \eqref{eq:U}}

\end{semilogyaxis}
\end{tikzpicture} \subcaption{Comparison of needed samples between hyperbolic wavelet regression and ANOVA hyperbolic wavelet regression.}
\label{fig:pyramid3}
\end{subfigure}
\caption{Numerical results for the pyramid function~\eqref{eq:pyramid}.}
\label{fig:pyramid}
\end{figure}
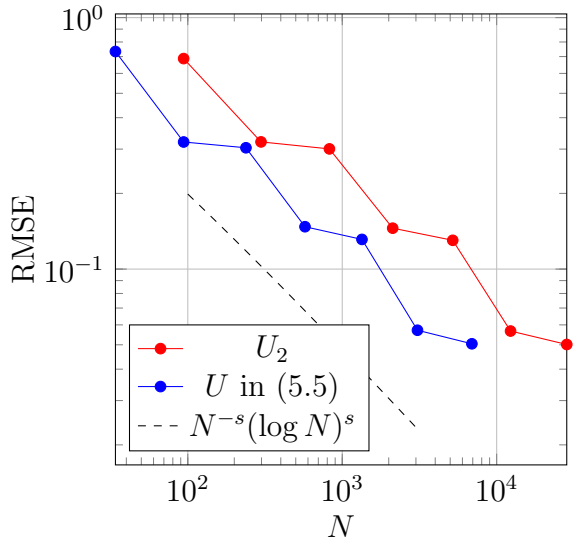
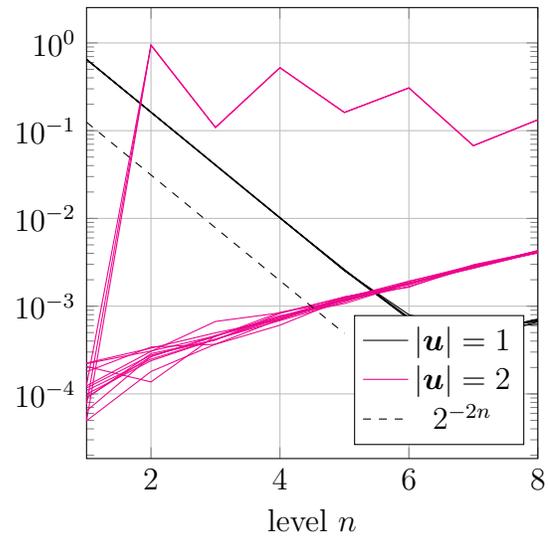
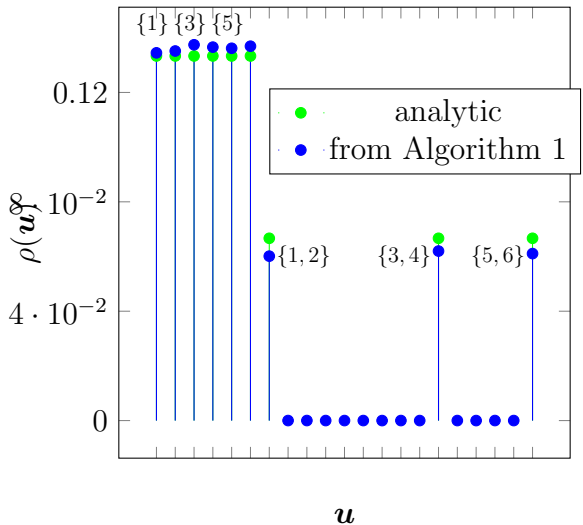
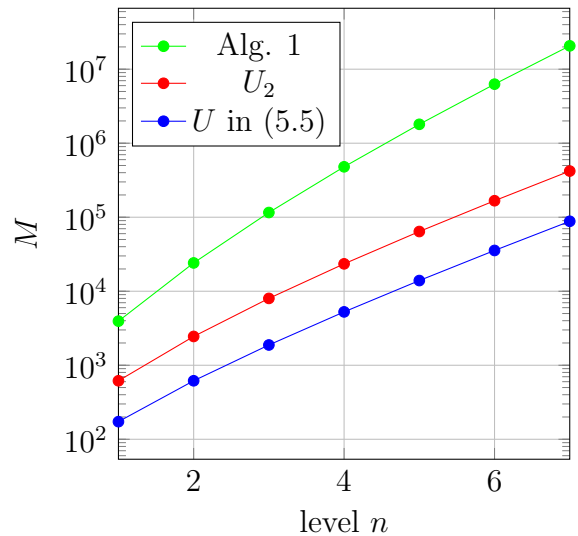
In the sequel we consider the following pyramid-like function having diagonal kinks on the one hand and a coupling of only two variables in each summand on the other hand.   
\begin{equation}\label{eq:pyramid}
f(\vec x) = 2\sqrt{6}\, \sum_{i=1}^3 \left(\tfrac 13 -\max\{|x_{2i-1}|,|x_{2i}|\}\right)\quad,\quad {\vec x} \in [-1/2,1/2)^6.
\end{equation}
This function can be periodically extended since it is constant on the boundary. It takes some efforts (but it is possible) to show that that this functions has a mixed Besov-Nikolskij regularity of $s = 1/2+1/(2p)$ in the sense of $\bB^s_{p,\infty}(\T^6)$, see Definition \ref{diff}, where $p$ may vary in $[1,\infty]$. In fact, the mixed regularity is the same as for the function $g(x_1,x_2) = (x_1-x_2)_+$. So we even have smoothness $s=1$ if $p=1$ which would be relevant for integration problems and lead to a rate of $n^{-1}$. This might have some relation to \cite{GrKuSl10}, where functions of ``type'' $g$ have been considered. \\ 

In fact, we choose the superposition dimension $\nu=2$ and use Algorithm~\ref{alg:anova}
for different levels $n$. We always consider the logarithmic oversampling where $M\gtrsim N\log N$. In a first experiment 
we test steps $1$ to $3$ of Algorithm~\ref{alg:anova}, i.e. we choose as index set $U_2=\{\vec u\in \mathcal{P}([6])\mid |\vec u|\leq 2\}$, see~\eqref{eq:Unu}.
For the RMSE we use a random 
test sample $\X_{\text{test}}$ of size $3M$. The results can be seen in Figure~\ref{fig:pyramid1}, where we plot the 
RMSE in relation to the number of parameters $N$. The regularity in $\bB^{s}_{2,\infty}(\T^6)$ determines the rate $s=3/4$ which coincides with the proposed rate from Corollary~\ref{cor:one_f}. 

Calculating the global sensitivity indices $\rho(\vec u,S_n^{\X,U_\nu})$, see~Figure~\ref{fig:gsi2} in step $4$ of Algorithm~\ref{alg:anova}, gives us 
(already for the low maximal index $n=1$) the index set 
\begin{equation}\label{eq:U}
U = \{\varnothing,\{1\},\{2\},\{3\},\{4\},\{5\},\{6\},\{1,2\},\{3,4\},\{5,6\}\}\subset U_2.
\end{equation}
Actually, the test function $f$ is for this set $U$ in $H^{s,U}_{\mix}(\T^6)$, see~\eqref{eq:HsU}.
In a second experiment we use the steps $6$ to $8$ of our algorithm with this index set $U$ for different maximal levels $n$. 
The resulting RMSE in comparison to the approximation with the index-set $U_\nu$ is plotted in Figure~\ref{fig:pyramid1}. As expected, 
the same approximation error needs less parameters and therefore less samples compared to the bigger set $U_2$.\\

In Figure~\ref{fig:pyramid2} we study the solution vector $a_{\vec j,\vec k}$ from the finest approximation using the index set $U_2$. We 
plot for every 
$\vec u\in U_2$ the sum 
$$\big{(}\sum_{\stackrel{\uinv{j}{u} }{|\vec j_{\vec u}|_1=n} }\sum_{\vec k\in \I_{\vec j}} |a_{\vec j,\vec k}|^2\big{)}^{1/2}.$$
The ANOVA terms of order $1$ are black, the terms of order $2$ are magenta. This example shows that the ANOVA terms of lower order 
can be smoother than the function $f$ itself, see also~\cite{GrKuSl10}. The functions $f_{\vec u}$ with $|\vec u|=1$ are of the 
form $f_{\{1\}}(x_1) = c\, (\tfrac{1}{12}-x_1^2)$ and are in $H^{3/2-\varepsilon}_\mix(\T)$. But the only kink position is at $-\tfrac 12$, 
which is a point where the wavelets also have lower regularity. Therefore, we see that the sum of the wavelet coefficients 
of the one-dimensional terms decay like $2^{-2 n}$, which coincides with the regularity $s = 2$. Regarding the two-dimensional terms, 
the three biggest sums belong to the index 
sets $\{1,2\},\{3,4\}$ and $\{5,6\}$, which are part of $U$ in \eqref{eq:U}. Note, that for $n=1$ the plotted sums do not distinguish the three relevant two-dimensional terms from the other ones, but the computation of the global sensitivity indices with~\eqref{eq:gsi_of_g} does, see Figure~\ref{fig:gsi2}. 

In Figure~\ref{fig:pyramid3} we see the number of necessary samples for different wavelet indices. 
The classical hyperbolic wavelet regression in Algorithm~\ref{alg:1} needs $\O(2^n n^{6})$ samples. The 
restriction to low dimensional terms using the index set $U_2$ reduces this number to $\O(\binom{6}{2} \,2^n n^2)$. 
The further reduction to the index set $U$ in \eqref{eq:U}, which reduces the number of two-dimensional terms to three, 
again reduces the number of necessary samples significantly to $\O(3\cdot 2^n n^2)$, by inheriting the same approximation error.

\appendix{}
\section{Besov-Nikolskij-Sobolev spaces of mixed smoothness on the $d$-torus}

Here we summarize some relevant results from \cite[Chapt.\ 3]{DuTeUl16}. In particular we give the standard definition of the used function spaces. Let us first define Besov-Nikolskij spaces of mixed smoothness. We 
will use the classical definition via mixed moduli of smoothness. Let us
first recall the basic concepts. For univariate functions $f:\T \to \C$ the
$m$-th difference operator $\Delta_h^{m}$ is defined by
\begin{equation*}
\Delta_h^{m}(f,x) := \sum_{j =0}^{m} (-1)^{m - j} \binom{m}{j} f(x +
jh)\quad,\quad x\in \T, h\in [0,1]\,.
\end{equation*}

Let $\vec u$ be any subset of $\{1,...,d\}$. For multivariate functions $f:\T^d\to
\C$ and $\vec h\in [0,1]^d$ the mixed $(m,\vec u)$-th difference operator $\Delta_{\vec h}^{m,\vec u}$
is defined by 
\begin{equation*}
\Delta_{\vec h}^{m,\vec u} := \
\prod_{i \in \vec u} \Delta_{h_i,i}^m\quad\mbox{and}\quad \Delta_{\vec h}^{m,\varnothing} =  \operatorname{Id},
\end{equation*}
where $\operatorname{Id}f = f$ and $\Delta_{h_i,i}^m$ is the univariate
operator applied to the $i$-th coordinate of $f$ with the other variables kept
fixed. 

\begin{definition}\label{diff} 
Let $s > 0$ and $1 \le p \le \infty$. Fixing an integer $m > s$, we define the space $\bB^s_{p,\infty}(\T^d)$ as the set of all $f\in L_p(\T^d)$ such that for any $\vec u \subset \{1,...,d\}$
\[
\big\|\Delta_\bh^{m,\vec u}(f,\cdot)\big\|_{L_p(\T^d)}
\ \le \  
C\, \prod_{i \in \vec u} |h_i|^s
\]
for some positive constant $C$ and introduce the norm in this space
\[
\| \, f \, \|_{\bB^s_{p,\infty}} :=
\sum_{\vec u \subset \{1,...,d\}}
\, | \, f \, |_{\bB^s_{p,\infty}(\vec u)},
\]
where 
\[
| \, f \, |_{\bB^s_{p,\infty}(\vec u)} :=
\sup_{0 < |h_i| \le 1, \ i \in \vec u} \,  
\left(\prod_{i \in \vec u} |h_i|^{-s} \right) \,\big\| \, \Delta_\bh^{m,\vec u}(f,\cdot) \, \big\|_{L_p(\T^d)}\,.
\]
\end{definition}

Let us proceed to the related (univariate) Sobolev spaces with smoothness $s \in \N$. 
\begin{equation*}\label{eq:Hm}
H^s_p(\T):=\left\{f:\T\to \C\mid \norm{f}_{H^s_p(\T)}<\infty\right\},
\end{equation*}
where the norm is defined by
$$\norm{f}_{H^s_p(\T)}^p=\sum_{0\leq k\leq s}\norm{\Dx^{k}f}_{L_p(\T)}^p,$$
with $\Dx^{k}f = \frac{\d^k }{\d x^k}f$.
An equivalent norm for this function space is 
$$\norm{f}_{H^s_p(\T)}^p=\norm{f}_{L_p(\T)}^p+\norm{\Dx^{s}f}_{L_p(\T)}^p.$$

In this paper we are mainly interested in the case $p=2$ but higher dimensions $d$. We introduce the 
so-called Sobolev spaces with dominating mixed derivatives $H^m_{\mix}(\T^d)$ in the usual way:
\begin{equation}\label{eq:Hsmix}
H^s_{\mix}(\T^d):=\left\{f:\T^d\to \C\mid \norm{f}_{H^s_{\mix}(\T^d)}<\infty\right\},
\end{equation}
where the norm is defined by
\begin{equation}\label{eq:Hmnorm}
\norm{f}_{H^s_{\mix}(\T^d)}=\sum_{0\leq \norm{\vec k}_\infty\leq s}\norm{\Dx^{\vec k}f}_{L_2(\T^d)},
\end{equation}
with the partial derivatives $\Dx^{\vec k} f = \tfrac{\partial^{k_1+\ldots+k_d} }{\partial x_1^{k_1}\cdots\partial x_d^{k_d} }$.

It clearly holds for $d=1$
\begin{equation*}\label{univ}
      H^s_{\mix}(\T) = H^s_2(\T) =: H^s(\T)\,.
\end{equation*}
The case $p=2$ allows for a straight-forward extension to fractional smoothness parameters.

\begin{definition}Let $s>0$. Then we define
\begin{equation*}\label{eq:Hsmix2}
H^s_{\mix}(\T^d):=\left\{f:\T^d\to \C\mid \norm{f}_{H^s_{\mix}(\T^d)}<\infty\right\},
\end{equation*}
where the norm is defined by
\begin{equation*}
\label{Fourier_char}\norm{f}_{H^s_{\mix}(\T^d)}^2=\sum_{\vec k\in\Z^d}|c_{\vec k}(f)|^2\prod_{i=1}^d(1+|k_i|^2)^{s},
\end{equation*}
\end{definition}
This norm  is equivalent to the norm in \eqref{eq:Hmnorm} for $s\in \N$, see \cite{KuSiUl14}. We will consider the case where $s>\tfrac 12$, since in this case we have that $H^s_{\mix}(\T^d)	\hookrightarrow C(\T^d)$, which is necessary to sample the function.

There is a further useful equivalent norm which is based on a decomposition of $f$ in dyadic blocks. We introduce the dyadic blocks 
\begin{equation*}
J_j=\begin{cases}
\{k\in \Z\mid 2^{j-1}\leq |k|<2^{j}\}& \text{ if } j\geq 1,\\
\{0\}& \text{ if } j=0.\end{cases}
\end{equation*}
For $\bj \in \N_0^d$ we define 
$$
J_{\bj}:= J_{j_1} \times ... \times J_{j_d}\,. 
$$
if all components belong to $\N_0$. Using these dyadic blocks, we decompose the Fourier series of the function $f$ into 
$$f=\sum\limits_{\bj \in \Z_0^d} f_{\bj}({\vec x}) \text{ with } f_{\bj}({\vec x})=\sum\limits_{\bk \in J_{\bj}} c_{\bk}\e^{2\pi \im\langle\bk,{\vec x}\rangle}$$
in case that $\bj\in \N_0^d$ and $f_{\bj}:=0$ otherwise. This immediately gives
\begin{equation}\label{dyadic}
    \|f\|_{H^s_{\mix}}^2 \asymp \sum\limits_{\bj \in \N_0^d} 2^{2|\bj|_1s} \|f_{\bj}\|_{L_2(\T^d)}^2.    
\end{equation}

Interestingly, there is also a Fourier analytic characterization of the above defined Besov-Nikolskij spaces $\bB^s_{p,\infty}(\T^d)$ which even works for $1<p<\infty$. Instead of taking the $\ell_2$-norm of the weighted sequence $(2^{|\bj|_1s}\|f_{\bj}\|_{L_p(\T^d)})_{\bj \in \N_0^d}$ we take the $\ell_{\infty}$-norm. We have  
\begin{equation*}\label{dyadic_besov}
    \|f\|_{\bB^s_{p,\infty}} \asymp \sup\limits_{\bj \in \N_0^d} 2^{|\bj|_1 s }\|f_j\|_{L_p(\T^d)}\,.
\end{equation*}

\section*{Acknowledgments}
Laura Lippert and Daniel Potts acknowledge funding by Deutsche Forschungsgemeinschaft (DFG, German Research Foundation) - Project-ID 416228727 - SFB 1410. Tino Ullrich would like to acknowledge support by the DFG Ul-403/2-1.

\bibliographystyle{abbrv}

\end{document}